\newtheorem{theorem}{Theorem}
\newtheorem{definition}[theorem]{Definition}
\newtheorem{lemma}[theorem]{Lemma}
\newtheorem{lemma-definition}[theorem]{Lemma-Definition}
\newtheorem{proposition-definition}[theorem]{Proposition-Definition}
\newtheorem{proposition}[theorem]{Proposition}
\newtheorem{corollary}[theorem]{Corollary}
\newcommand{\CC}{\mathbb C}
\newcommand{\FF}{\mathbb F}
\newcommand{\NN}{\mathbb N}
\newcommand{\PP}{\mathbb P}
\newcommand{\QQ}{\mathbb Q}
\newcommand{\RR}{\mathbb R}
\newcommand{\ZZ}{\mathbb Z}
\newcommand{\cA}{\mathcal A}
\newcommand{\cH}{\mathcal H}
\def\myarrow{\ \hbox to 2em{\leaders
\hbox to 0.5ex{\hss\raise 0.55ex\hbox to 0.3ex{\hrulefill}\hss}
\hfill\,\llap{$>$}}\ }
\title{  Riemann Hypothesis Analogue for locally finite modules over the absolute Galois group of a finite field  \footnote{{\it  2010 Mathematics Subject Classification:} Primary:   14G15; Secondary:  94B27,  14G05, 11M38   \protect\\
{\it Key words and phrases:}
The profinite completion   of the infinite cyclic group,
locally finite $\mathfrak{G}$-modules,
$\zeta$-function of a  locally finite $\mathfrak{G}$-module,
Riemann Hypothesis Analogue with respect to the projective line,
finite coverings of locally finite $\mathfrak{G}$-modules with Galois closure. \protect\\
Supported by   Contract 144/2015 and  Contract 57/12.04.2016  with the Scientific Foundation of Kliment Ohridski  University of Sofia. } }
\author{Azniv Kasparian, Ivan Marinov
}
\date{      }
\begin{document}
\maketitle

\thispagestyle{empty}

\begin{abstract}
 The  article  provides a sufficient condition for a locally finite module $M$ over the absolute Galois group $\mathfrak{G} = {\rm Gal} ( \overline{\FF_q} / \FF_q)$ of a finite field $\FF_q$ to satisfy the Riemann Hypothesis Analogue with respect to the projective line $\PP^1( \overline{\FF_q})$.
The condition holds for all smooth irreducible projective curves   of positive genus, defined over $\FF_q$.
 By construction of an explicit example we establish that the scope of our main result is larger than the class of the smooth irreducible projective varieties, defined over $\FF_q$.
\end{abstract}

\section{Introduction}

 The absolute Galois group of a finite field $\FF_q$ is isomorphic to the profinite completion
 $$
 \widehat{\ZZ} = \{ ( l_m ( {\rm mod} \, m)) _{m \in \NN} \in \prod\limits _{m=1} ^{\infty} ( \ZZ_m, +) \, \vert \, l_n \equiv l_m ( {\rm mod} \, m) \, \mbox{  for  } \, \forall m / n \}
 $$
 of the infinite cyclic group $(\ZZ, +) = \langle \Phi _q \rangle$, generated by the Frobenius automorphism
  $\Phi _q : \overline{\FF_q} \rightarrow \overline{\FF_q}$, $\Phi _q (a) = a^q$ for $\forall a \in \overline{\FF_q}$.
  The diagonal map
  $$
  \Delta : \ZZ \longrightarrow \widehat{\ZZ},  \quad
  \Delta (z) = (z({\rm mod} \, m)) _{m \in \NN} \ \ \mbox{  for } \ \ \forall z \in \ZZ
  $$
with a dense image  is an embedding,  so that  $\ZZ$ is residually finite.

  A set $M$ with an action of a group $G$ will be called a $G$-module.

  \begin{definition}    \label{LocallyFiniteModule}
  A $\mathfrak{G} = {\rm Gal} ( \overline{\FF_q} / \FF_q)$-module  $M$ is locally finite if all $\mathfrak{G}$-orbits on $M$ are finite and for any $n \in \NN$ there are at most finitely many $\mathfrak{G}$-orbits on $M$ of cardinality $n$.

  The degree of a finite $\mathfrak{G}$-orbit ${\rm Orb} _{\mathfrak{G}} (x) \subset M$ is its cardinality,
  $
  \deg {\rm Orb} _{\mathfrak{G}} (x) := \left| {\rm Orb} _{\mathfrak{G}} (x) \right|.
  $
  \end{definition}

 The smooth irreducible projective curves $X / \FF_q \subset \PP^n ( \overline{\FF_q})$, defined over a   $\FF_q$ are examples of locally finite $\mathfrak{G} = {\rm Gal} ( \overline{\FF_q} / \FF_q)$-modules.

\begin{definition}   \label{ZetaFunction}
If $M$ is a locally finite $\mathfrak{G} = {\rm Gal} ( \overline{\FF_q} / \FF_q)$-module then the formal power series
$$
\zeta _M(t) := \prod\limits _{\nu \in {\rm Orb} _{\mathfrak{G}} (M)} \left( \frac{1}{1 - t^{\deg \nu}} \right) \in \CC [[t]]
$$
is called the $\zeta$-function of $M$.
\end{definition}

In the case of a smooth irreducible curve $X / \FF_q \subset \PP^n ( \overline{\FF_q})$, the $\zeta$-function $\zeta _X(t)$ of $X$ as a locally finite $\mathfrak{G} = {\rm Gal} ( \overline{\FF_q} / \FF_q)$-module coincides with the local Weil $\zeta$-function of $X$.
We fix the projective line $\PP^1 ( \overline{\FF_q})$ as a basic model, to which we compare the locally finite $\mathfrak{G}$-modules $M$ under consideration and recall its $\zeta$-function
$$
\zeta _{\PP^1 ( \overline{\FF_q})} (t) = \frac{1}{(1-t)(1-qt)}.
$$
Note also  that $\zeta _M(0) =1$.

 \begin{definition}   \label{RHA}
 A locally finite $\mathfrak{G} ={\rm Gal} ( \overline{\FF_q} / \FF_q)$-module $M$ satisfies the Riemann Hypothesis Analogue
 with respect to the projective line $\PP^1 ( \overline{\FF_q})$ if
 $$
 P_M(t) = \frac{\zeta _M (t)}{\zeta _{\PP^1 ( \overline{\FF_q})} (t)} =  \sum\limits _{i=0} ^d a_i t^{i} = \prod\limits _{i=1} ^d (1 - \omega _it) \in \CC[t]
 $$
 is a polynomial  with $\left| \omega  _i \right| = \sqrt[d]{\left| \omega  _1 \right| \ldots \left| \omega _d \right|} = \sqrt[d]{\left| a_d \right|}$ for
 $\forall 1 \leq i \leq d$.
 \end{definition}

 In order to explain the etymology of the notion, let us plug in $q^{-s}$, $s \in \CC$ in the $\zeta$-function $\zeta _M(t)$ of $M$ and view
  $$
  \zeta _M \left( q^{-s} \right) = \frac{\prod\limits _{i=1} ^d (1 - \omega _i q^{-s})}{(1 - q^{-s})(1 - q^{1-s})} =
  \frac{\prod\limits _{i=1} ^d (q^s - \omega _i)}{q^{sd-2s+1} (1 - q^s)(1 - q^{s-1})}
  $$
  as a meromorphic function of $s \in \CC$ with poles at $s \in \{ 0,1 \}$.
  The complex zeros $s_o \in \CC$ of $\zeta _M \left( q^{-s} \right)$ have $\omega _i = q^{s_o}= q^{{\rm Re} ( s_o) + i {\rm Im} (s_o)}$, whereas
   $\left| \omega _i \right| = q^{{\rm Re} (s_o)}$ for some  $1 \leq i \leq d$.
Thus, $M$ satisfies the Riemann Hypothesis Analogue with respect to $\PP^1 ( \overline{\FF_q})$ exactly when
 ${\rm Re} (s_o) = \lambda := \log _q \sqrt[d]{\left| a_d \right|} \in \RR ^{\geq 0}$ for any zero $s_o \in \CC$ of $\zeta _M(q^{-s})$.
 All  smooth irreducible curves $X / \FF_q \subset \PP^n ( \overline{\FF_q})$ of genus $g \geq 1$ satisfy the Riemann Hypothesis Analogue with respect to
  $\PP^1 ( \overline{\FF_q})$ by the Hasse-Weil Theorem (cf.\cite{B} or \cite{St}).
More precisely, $P_X(t) = \frac{\zeta _X(t)}{\zeta _{\PP^1 ( \overline{\FF_q})} (t)} = \prod\limits _{i=1} ^{2g} (1 - \omega _it)$
with $\left| \omega _i \right| = q^{\frac{1}{2}}$ for $\forall 1 \leq i \leq 2g$, which is equivalent to ${\rm Re} (s_o) = \frac{1}{2}$ for all the complex zeros $s_o \in \CC$ of $\zeta _X ( q^{-s})$.
That resembles the original Riemann Hypothesis ${\rm Re} (z_o) = \frac{1}{2}$ for the non-trivial zeros $z_o \in \CC \setminus ( -2 \NN)$ of Riemann's $\zeta$-function
$$
\zeta (z) := \sum\limits _{n=1} ^{\infty} \frac{1}{n^z} ,  \quad z \in \CC.
$$

The present article studies the $\zeta$-function $\zeta _M(t)$ of a locally finite $\mathfrak{G} = {\rm Gal} ( \overline{\FF_q} / \FF_q)$-module and provides a sufficient condition for $M$ to satisfy the Riemann Hypothesis Analogue with respect to $\PP^1( \overline{\FF_q})$.
All properties of  $\zeta _M(t)$ under consideration hold for a smooth irreducible projective curve $M = X$, defined over $\FF_q$.
By  its  very definition, $\zeta _M(t)$ is a local $\zeta$-function.
It is interesting to introduce arithmetic objects $A$, whose reductions modulo prime integers $p$ are locally finite ${\rm Gal} ( \overline{\FF_p} / \FF_p)$-modules and to study the global $\zeta$-functions of $A$.
Another topic of interest is the Grothendieck ring of the locally finite $\mathfrak{G} = {\rm Gal} ( \overline{\FF_q} / \FF_q)$-modules and the constructions of motivic $\zeta$-functions.
Let us mention that our $\zeta _M(t)$  is  essentially different from Igusa's $p$-adic $\zeta$-function of a hypersurface and its Archimedean, global or motivic versions.
Our study of the Riemann Hypothesis Analogue for a locally finite $\mathfrak{G} = {\rm Gal} ( \overline{\FF_q} / \FF_q)$-module is motivated by Duursma's notion of a $\zeta$-function $\zeta _C(t)$ of a linear code $C \subset \FF_q ^n$ and the Riemann Hypothesis Analogue for $\zeta _C(t)$, discussed in \cite{D2}.
Recently, $\zeta$-functions have been used for description of the subgroup growth or the representations of a group, as well as of some properties of finite graphs.

We extract some features of Bombieri's proof \cite{B} of the Riemann Hypothesis Analogue for a smooth irreducible curve
$X / \FF_q \subset \PP^n ( \overline{\FF_q})$ of genus $g >0$   and provide a sufficient condition for a locally finite  $\mathfrak{G} ={\rm Gal} ( \overline{\FF_q} / \FF_q)$-module $M$   to satisfy the Riemann Hypothesis Analogue with respect to $\PP^1 ( \overline{\FF_q})$.
The sufficient condition, discussed in our main Theorem \ref{MainTheorem} consists  of three assumptions, which are shown to be satisfied by the smooth irreducible projective curves $X / \FF_q \subset \PP^N ( \overline{\FF_q})$ of genus $g>0$.
 The first  assumption  is the presence of a polynomial $\zeta$-quotient $P_M(t) = \frac{\zeta _M(t)}{\zeta _{\PP^1 ( \overline{\FF_q})} (t)} \in \ZZ[t]$.
  The second  one  is the existence of locally finite $\mathfrak{G} _m = {\rm Gal} ( \overline{\FF_q} / \FF_{q^m})$-submodules $M_o \subseteq M$, $L_o \subseteq \PP^1 ( \overline{\FF_q})$ with at most finite complements $M \setminus M_o$, $\PP^1 ( \overline{\FF_q}) \setminus L_o$, related by a finite surjective $\mathfrak{G}_m$-equivariant map $\xi : M_o \rightarrow L_o$.
  Moreover,  $\xi$  is required to have  a Galois closure, which amounts to the presence of  a commutative diagram of $\mathfrak{G}_s = {\rm Gal} ( \overline{\FF_q} / \FF_{q^s})$-equivariant maps
 \begin{equation} \label{IntroGaloisClosure}
 \begin{diagram}
 \node{N_o}  \arrow{e,t}{\xi _{H_1}}  \arrow{se,r}{\xi _H} \node{M_o}  \arrow{s,r}{\xi} \\
 \node{\mbox{  }}  \node{L_o}
  \end{diagram}.
 \end{equation}
for some $s \in m \NN$  with $\xi _H (z) = {\rm Orb} _H (z)$ and $\xi _{H_1} (z) = {\rm Orb} _{H_1} (z)$ for $\forall z \in N_o$.
 The third assumption from our sufficient condition for the Riemann Hypothesis Analogue  is formulated as an order of $\xi : M_o \rightarrow L_o$ and an $H$-order of $\xi _H : N_o \rightarrow L_o$.
    It  consists of  upper bounds on the number of the $\Phi _{q^s} ^r$-rational (i.e., $\Phi _{q^s} ^r$-fixed) points on $M_o$, respectively,  $N_o$, depending on the number of  $\Phi _{q^s} ^r$-rational points of $L_o$.

  While proving Theorem \ref{MainTheorem}, we observe that the Riemann Hypothesis Analogue for $M$ with respect to $\PP^1 ( \overline{\FF_q})$ as a locally finite module over the profinite completion  $\mathfrak{G} = {\rm Gal} ( \overline{\FF_q} / \FF_q) = \widehat{ \langle \Phi _q  \rangle}$ of $\langle \Phi _q \rangle \simeq (\ZZ, +)$ is equivalent to the same statement for $M$ as a locally finite module over  any  twist $\widehat{ \langle h \Phi _q^n \rangle}$ of the profinite action by a bijective self-map $h: M \rightarrow M$ of finite order in ${\rm Sym} (M)$, which centralizes the action of $\mathfrak{G}$  and any $n \in \NN$.
 We observe also  that the Riemann Hypothesis Analogue for $M$ with respect to $\PP^1 ( \overline{\FF_q})$ implies a specific functional equation for the polynomial $\zeta$-quoties $P_M(t) = \frac{\zeta _M(t)}{\zeta _{\PP^1 ( \overline{\FF_q})} (t) } \in \ZZ[t]$ of $M$.
   The work provides an explicit example of a locally finite $\mathfrak{G} = {\rm Gal} ( \overline{\FF_q} / \FF_q)$-module, subject to the Riemann Hypothesis Analogue with respect to $\PP^1 ( \overline{\FF_q})$, which is not isomorphic to a smooth irreducible projective variety $Y / \FF_q \subset \PP^n ( \overline{\FF_q})$ as a module over $\mathfrak{G}$.
Thus, our main Theorem \ref{MainTheorem} does not reduce to the Riemann Hypothesis Analogue for the smooth irreducible projective varieties, defined over $\FF_q$.

Here is a brief synopsis of the  article.
The next section 2 collects some trivial immediate properties of the locally finite $\mathfrak{G} = {\rm Gal} ( \overline{\FF_q} / \FF_q)$-modules $M$ and their $\mathfrak{G}$-equivariant maps.
Section 3   supplies several expressions of the $\zeta$-function $\zeta _M(t)$ of $M$ and shows that $\zeta _M(t)$ determines uniquely $M$, up to a $\mathfrak{G}$-equivariant isomorphism.
It studies the $\zeta$-quotient $P_M(t) = \frac{\zeta _M(t)}{\zeta _{\PP^1 ( \overline{\FF_q})} (t)} \in \ZZ [[t]]$ of $M$  and provides two necessary and sufficient conditions for $P_M(t) \in \ZZ[t]$ to be a polynomial.
Section 4 discusses the finite coverings of locally finite $\mathfrak{G}$-modules and their Galois closures.
It establishes that a locally finite $\mathfrak{G}$-module $M$ is faithful if and only if $M$ is an infinite set.
After introducing the notion of a finite Galois covering $\xi _H : N \rightarrow L$ of locally finite $\mathfrak{G}$-modules, we establish that a finite separable surjective morphism $\xi : X \rightarrow Y$ of smooth irreducible projective curves is an $H$-Galois covering  of locally finite
 $\mathfrak{G}_m = {\rm Gal} ( \overline{\FF_q} / \FF_{q^m})$-modules for some $m \in \NN$ exactly when the induced embedding
  $\xi ^* : \overline{\FF_q} (Y) \hookrightarrow \overline{\FF_q} (X)$ of the function fields is a Galois extension with
Galois group   ${\rm Gal} ( \overline{\FF_q} (X) / \xi ^* \overline{\FF_q} (Y)) = H$.
  That explains the etymology of the notion of a finite Galois covering of locally finite $\mathfrak{G}$-modules.
  The definition of a Galois closure of a finite covering of locally finite $\mathfrak{G}$-modules arises from  the case of a non-constant separable morphism
  $\xi : X \rightarrow \PP^1( \overline{\FF_q})$ of smooth irreducible projective curves, when the Galois closure $\overline{\FF_q} (N_o) \supseteq \overline{\FF_q}(\PP^1 (\overline{\FF_q}))$ of $\overline{\FF_q} (X) \supseteq  \overline{\FF_q}(\PP^1 (\overline{\FF_q}))$  is associated with  a smooth irreducible quasi-projective curve $N_o$, subject to a commutative diagram (\ref{IntroGaloisClosure}) for some $\mathfrak{G}_s$-submodules $M_o \subseteq X$, $L_o \subseteq \PP^1 ( \overline{\FF_q})$ with $\left| X \setminus M_o \right| < \infty$, $\left| \PP^1 ( \overline{\FF_q}) \setminus L_o \right| < \infty$ and
  $H = {\rm Gal} ( \overline{\FF_q} (N_o) / \overline{\FF_q} ( \PP^1 ( \overline{\FF_q})))$,
  $H_1 = {\rm Gal} ( \overline{\FF_q} (N_o) / \overline{\FF_q} (X) )$.
 The final, fifth section is devoted to the main result of the article.
  After reducing the Riemann Hypothesis Analogue with respect to $\PP^1 ( \overline{\FF_q})$ for a locally finite
  $\mathfrak{G} = {\rm Gal} ( \overline{\FF_q} / \FF_q)$-module $M$ to   lower and upper bounds on the number of rational points of $M$, we discuss the notion of an order of a finite covering $\xi : M_o \rightarrow L_o$ and the notion of an $H$-order of an $H$-Galois covering $\xi _H : N_o \rightarrow L_o$.
  Then we prove a sufficient condition for a locally finite $\mathfrak{G}$-module to satisfy the Riemann Hypothesis Analogue with respect to the projective line $\PP^1 ( \overline{\FF_q})$.
 An explicit example  establishes the existence of   locally finite $\mathfrak{G}$-modules, subject to the Riemann Hypothesis Analogue with respect to $\PP^1 ( \overline{\FF_q})$, which  are not isomorphic  (as  modules over  $\mathfrak{G}$) to   smooth irreducible projective varieties, defined over $\FF_q$.

\section{ Preliminaries on locally finite $\widehat{\ZZ}$-modules  \\   and  their equivariant maps  }

The present section collects some trivial facts about the locally finite modules over $\mathfrak{G} = {\rm Gal} ( \overline{\FF_q} / \FF_q) \simeq \widehat{\ZZ}$ and their $\mathfrak{G}$-equivariant maps.
For an arbitrary $n \in \NN$, note that
$$
\mathfrak{G} \times \PP^n ( \overline{\FF_q}) \longrightarrow \PP^n ( \overline{\FF_q}),
$$
$$
( \Phi _q ^{l_s ({\rm mod} \, s)} ) _{s \in \NN} [ a_0 : \ldots : a_i : \ldots a_n ] =
[ a_0 ^{q^{l_s}} : \ldots  : a_n ^{q^{l_s}} ]  \ \ \mbox{  for  } \ \ a_0, \ldots, a_n \in \FF_{q^s}
$$
is a correctly defined action with finite orbits  by Remark 2.1.10 (i) and Lemma 2.1.9 from \cite{NX}.
 According to  Lemma 2.1.11 from \cite{NX},   $\deg {\rm Orb} _{\mathfrak{G}} (a) = m$ for some $a \in \PP^n ( \overline{\FF_q})$  exactly when
 $
 {\rm Orb} _{\mathfrak{G}} (a) = {\rm Orb} _{\langle \Phi _q \rangle} (a) = \{ \Phi _q ^{j} (a) \, \vert \, 0 \leq j \leq m-1 \}.
 $
This, in turn, is equivalent to     $\FF_q \left( \frac{a_0}{a_i}. \ldots \frac{a_n}{a_i} \right) = \FF_{q^m}$ for any $0 \leq i \leq n$ with $a_i \neq 0$.
 Moreover,    $m \in \NN$ is the minimal natural number with $\Phi _q ^m (a) = a$.
 For   $\forall m \in \NN$ there are at most finitely many $\mathfrak{G}$-orbits of degree $m$ on $\PP^n ( \overline{\FF_q})$, as far as an arbitrary standard affine open  subset $U_i := \{ a = [ a_0 : \ldots : a_n] \in \PP^n ( \overline{\FF_q}) \, \vert \, a_i \neq 0 \}$, $0 \leq i \leq n$ contains finitely many points with $\FF_q \left( \frac{a_0}{a_i}, \ldots , \frac{a_n}{a_i} \right) = \FF_{q^m}$.
Thus, $\PP^n ( \overline{\FF_q})$ is a locally finite $\mathfrak{G}$-module.

If $X = V(f_1, \ldots , f_l) \subset \PP^n ( \overline{\FF_q})$ is a smooth irreducible curve, cut by homogeneous  polynomials
 $f_1, \ldots , f_l \in \FF_q [ x_0, \ldots , x_n]$  of one and a same degree with coefficients from $\FF_q$, $X$ is said to be defined  over $\FF_q$ and denoted by
 $X / \FF_q \subset \PP^n ( \overline{\FF_q})$.
 The $\mathfrak{G}$-action on $\PP^n ( \overline{\FF_q})$ restricts to a locally finite  $\mathfrak{G}$-action on $X$, due to the $\mathfrak{G}$-invariance  of $f_1, \ldots , f_l$.

The following lemma verifies that   the aforementioned properties  of the $\mathfrak{G}$-action on
 $X / \FF_q \subset \PP^n ( \overline{\FF_q})$ are valid for an arbitrary locally finite action of $\widehat{\ZZ}$.

\begin{lemma}   \label{OrbStab}
Let $\mathfrak{G} = \widehat{\langle \varphi \rangle}$ be the profinite completion of an infinite cyclic group $\langle \varphi \rangle \simeq ( \ZZ, +)$, $M$ be a locally finite $\mathfrak{G}$-module, ${\rm Orb} _{\mathfrak{G}} (x) \subseteq M$ be a $\mathfrak{G}$-orbit on $M$ of degree
$m = \deg {\rm Orb} _{\mathfrak{G}} (x)$ and $\mathfrak{G}_m = \widehat{\langle \varphi ^m \rangle}$ be the profinite completion of
$\langle \varphi ^m \rangle \simeq ( \ZZ, +)$.
Then:

(i)  any $y \in {\rm Orb} _{\mathfrak{G}} (x)$ has stabilizer
$
{\rm Stab} _{\mathfrak{G}} (y) = {\rm Stab} _{\mathfrak{G}} (x) = \mathfrak{G}_m;
$

(ii) the  orbits
$
{\rm Orb} _{\mathfrak{G}} (x) = {\rm Orb} _{\langle \varphi \rangle} (x) = \{  x, \varphi (x), \ldots , \varphi ^{m-1} (x) \}
$
 coincide;

(iii) for any $r \in \NN$ with ${\rm GCD} (r,m) = d \in \NN$, the $\mathfrak{G}$-orbits
 $$
 {\rm Orb} _{\mathfrak{G}} (x) = \coprod\limits _{j=1} ^d {\rm Orb} _{\mathfrak{G}_r} ( \varphi ^{i_j} (x))
 $$
  of $x$  decomposes into a disjoint union of $d$  orbits of degree $m_1 = \frac{m}{d}$ with respect to the action of
   $\mathfrak{G}_r = \widehat{\langle \varphi ^r \rangle}$.
\end{lemma}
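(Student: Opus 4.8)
\emph{Proof proposal.} The plan is to derive all three assertions from the elementary structure theory of $\widehat{\ZZ} \simeq \mathfrak{G}$ together with the orbit--stabilizer correspondence. First I would record that for every $k \in \NN$ one has $\widehat{\ZZ}/k\widehat{\ZZ} \simeq \ZZ/k\ZZ$, that the restriction to $\ZZ$ of the projection $\widehat{\ZZ} \to \widehat{\ZZ}/k\widehat{\ZZ}$ is reduction modulo $k$, and hence that $k\widehat{\ZZ}$ is the \emph{unique} subgroup of index $k$ in $\widehat{\ZZ}$: if $U \le \widehat{\ZZ}$ has index $k$, then $k\widehat{\ZZ} \subseteq U$ since $\widehat{\ZZ}$ is abelian, and equality of the two finite indices forces $U = k\widehat{\ZZ}$. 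Transported along $\langle \varphi \rangle \simeq (\ZZ,+)$, this reads: the finite-index subgroups of $\mathfrak{G} = \widehat{\langle \varphi \rangle}$ are exactly the $\mathfrak{G}_k = \widehat{\langle \varphi^k \rangle}$, with $[\mathfrak{G}:\mathfrak{G}_k] = k$, $\langle \varphi \rangle \cap \mathfrak{G}_k = \langle \varphi^k \rangle$, and --- reading $\widehat{\ZZ}$ as $\prod_p \ZZ_p$ --- $\mathfrak{G}_r \cap \mathfrak{G}_m = \mathfrak{G}_{\ell}$ with $\ell = {\rm LCM}(r,m)$.

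For (i) and (ii): since ${\rm Orb}_{\mathfrak{G}}(x)$ is finite, orbit--stabilizer gives $[\mathfrak{G}:{\rm Stab}_{\mathfrak{G}}(x)] = m$, whence ${\rm Stab}_{\mathfrak{G}}(x) = \mathfrak{G}_m$ by the uniqueness above; and for $y = g(x) \in {\rm Orb}_{\mathfrak{G}}(x)$ the group $\mathfrak{G}$ being abelian yields ${\rm Stab}_{\mathfrak{G}}(y) = g\,{\rm Stab}_{\mathfrak{G}}(x)\,g^{-1} = \mathfrak{G}_m$, which is (i). Then ${\rm Stab}_{\langle \varphi \rangle}(x) = \langle \varphi \rangle \cap \mathfrak{G}_m = \langle \varphi^m \rangle$, so the $\langle \varphi \rangle$-orbit $\{x,\varphi(x),\ldots,\varphi^{m-1}(x)\}$ has exactly $m$ pairwise distinct elements; being contained in the $m$-element set ${\rm Orb}_{\mathfrak{G}}(x)$, it coincides with it, which is (ii).

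For (iii): put $d = {\rm GCD}(r,m)$, $m_1 = m/d$, $\ell = {\rm LCM}(r,m) = rm/d$. Every $z \in {\rm Orb}_{\mathfrak{G}}(x)$ satisfies $\deg {\rm Orb}_{\mathfrak{G}}(z) = m$, so by (i) ${\rm Stab}_{\mathfrak{G}_r}(z) = \mathfrak{G}_r \cap {\rm Stab}_{\mathfrak{G}}(z) = \mathfrak{G}_r \cap \mathfrak{G}_m = \mathfrak{G}_{\ell}$, and therefore $\deg {\rm Orb}_{\mathfrak{G}_r}(z) = [\mathfrak{G}_r:\mathfrak{G}_{\ell}] = \ell/r = m_1$; consequently ${\rm Orb}_{\mathfrak{G}}(x)$ is a disjoint union of $\mathfrak{G}_r$-orbits all of degree $m_1$, so there are $m/m_1 = d$ of them. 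To exhibit representatives I would rerun the argument of (ii) inside $\mathfrak{G}_r$: from $\varphi^m(x) = x$ and the B\'ezout relation $r\ZZ + m\ZZ = d\ZZ$ one gets ${\rm Orb}_{\mathfrak{G}_r}(\varphi^i(x)) = {\rm Orb}_{\langle \varphi^r \rangle}(\varphi^i(x)) = \{\varphi^j(x) \,:\, j \equiv i \,({\rm mod}\,d)\}$, so the residue classes $i_j \equiv j-1 \,({\rm mod}\,d)$, $1 \le j \le d$, index the distinct $\mathfrak{G}_r$-orbits and ${\rm Orb}_{\mathfrak{G}}(x) = \coprod_{j=1}^{d} {\rm Orb}_{\mathfrak{G}_r}(\varphi^{i_j}(x))$.

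I do not expect a real obstacle: the lemma is bookkeeping on top of the classification of finite-index subgroups of $\widehat{\ZZ}$ and counting via orbit--stabilizer. The only delicate point is part (iii), where one must correctly evaluate $\mathfrak{G}_r \cap \mathfrak{G}_m = \mathfrak{G}_{{\rm LCM}(r,m)}$ (most cleanly in $\prod_p \ZZ_p$-coordinates) to obtain the common degree $m_1$, and use $r\ZZ + m\ZZ = {\rm GCD}(r,m)\ZZ$ to pin down an explicit transversal $\varphi^{i_1}(x),\ldots,\varphi^{i_d}(x)$.
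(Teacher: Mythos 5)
Your proposal is correct and follows essentially the same route as the paper's proof: uniqueness of the closed finite-index subgroup $\mathfrak{G}_k$ (abelian plus equal finite indices), orbit--stabilizer to identify ${\rm Stab}_{\mathfrak{G}}(y)=\mathfrak{G}_m$, and $\mathfrak{G}_r\cap\mathfrak{G}_m=\mathfrak{G}_{{\rm LCM}(r,m)}$ to get the common $\mathfrak{G}_r$-orbit degree $m_1$ and the count $d=m/m_1$. The only differences are cosmetic: you read off the ${\rm LCM}$ identity from the $\prod_p\ZZ_p$ decomposition, where the paper derives it by a short index-bookkeeping argument inside $\mathfrak{G}$, and you additionally exhibit an explicit B\'ezout transversal $\varphi^{i_j}(x)$ for the $\mathfrak{G}_r$-orbits, which the paper leaves implicit.
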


\begin{proof}

If $\mathfrak{G}':= {\rm Gal} ( \overline{\FF_q} / \FF_q) = \widehat{ \langle \Phi _q \rangle}$ is the absolute Galois group of the finite field $\FF_q$,
 then the group isomorphism
$$
f: \langle \varphi \rangle \longrightarrow \langle \Phi _q \rangle,   \quad
f( \varphi ^s) = \Phi _q ^s \ \ \mbox{ for } \ \ \forall s \in \NN
$$
extends uniquely to a group isomorphism
$$
f: \mathfrak{G} = \widehat{\langle \varphi \rangle} \longrightarrow \widehat{\langle \Phi _q \rangle} = \mathfrak{G}',  \quad
f( \varphi ^{l_s ( {\rm  mod}\, s)} ) _{s \in \NN} = ( \Phi _q ^{l_s ({\rm mod} \, s)} ) _{s \in \NN} \in
\prod\limits _{s \in \NN} ( \langle \Phi _q \rangle / \langle \Phi _q ^s \rangle).
$$
That is why it suffices to prove the lemma for $\mathfrak{G}'= \widehat{ \langle \Phi _q \rangle}$.

(i)  Note that $\FF_{q^m} \supset \FF_q$ is a finite Galois extension with Galois group
 ${\rm Gal} ( \FF_{q^m} / \FF_q) = \langle \Phi _q \rangle / \langle \Phi _q ^m \rangle \simeq ( \ZZ_m, +)$.
Bearing in mind that $\overline{\FF_q} = \overline{\FF_{q^m}} \supset \FF_{q^m} \supset \FF_q$ is a series of Galois extensions, one concludes that
$
{\rm Gal} ( \FF_{q^m} / \FF_q) = {\rm Gal} ( \overline{\FF_q} / \FF_q) / {\rm Gal} ( \overline{\FF_q} / \FF_{q^m}) = \mathfrak{G}'/ \mathfrak{G}'_m
$
for the profinite completion $\mathfrak{G}'_m = \widehat{ \langle \Phi _q ^m \rangle}$ of $\langle \Phi _q ^m \rangle \simeq ( \ZZ, +)$.
In particular, the closure $\mathfrak{G}'_m = \overline{\langle \Phi _q ^m \rangle}$ of $\langle \Phi _q ^m \rangle$ in $\mathfrak{G}'$ is a closed subgroup of $\mathfrak{G}'$ of index $[ \mathfrak{G}': \mathfrak{G}'_m] = m$ and $\mathfrak{G}_m = \widehat{ \langle \varphi ^m \rangle}$ is a closed subgroup of $\mathfrak{G} = \widehat{ \langle \varphi \rangle}$ of index $[ \mathfrak{G} : \mathfrak{G}_m] = m$.

We claim that $\mathfrak{G}_m$ is the only closed subgroup of $\mathfrak{G}$ of index $m$.
To this end, if $\cH$ is a closed subgroup of $\mathfrak{G}$ of $[ \mathfrak{G} : \cH] = m$ then $\mathfrak{G} / \cH$ is an abelian group of order $m$ and its element $\varphi \cH \in \mathfrak{G} / \cH$ has $( \varphi \cH) ^m = \varphi ^m \cH = \cH$.
 Thus, $\varphi ^m \in \cH$ implies the presence of a sequence of groups
  $\mathfrak{G} _m = \overline{\langle \varphi ^m \rangle} \subseteq \cH \subseteq \mathfrak{G}$.
  The index
$
[ \cH : \mathfrak{G}_m] = \frac{[ \mathfrak{G} : \mathfrak{G}_m]}{[ \mathfrak{G} : \cH]} = 1,
$
 so that  $\cH = \mathfrak{G}_m$.
In particular, for any $y \in {\rm Orb} _{\mathfrak{G}} (x)$ one has ${\rm Orb} _{\mathfrak{G}} (y) = {\rm Orb} _{\mathfrak{G}} (x)$ and  the closed subgroup ${\rm Stab} _{\mathfrak{G}} (y) \leq \mathfrak{G}$ of index  $[ \mathfrak{G} : {\rm Stab} _{\mathfrak{G}} (y)] = \deg {\rm Orb} _{\mathfrak{G}} (y) =m$ coincides with $\mathfrak{G}_m$.

(ii)   The inclusion $\langle \varphi \rangle \subset \widehat{ \langle \varphi \rangle} = \mathfrak{G}$ of groups implies the inclusion
 ${\rm Orb} _{\langle \varphi \rangle} (x) \subseteq {\rm Orb} _{\mathfrak{G}} (x)$ of the corresponding orbits.
 It suffices to show that $x, \varphi (x), \ldots , \varphi ^{m-1} (x)$ are pairwise different, in order to conclude that
  $\deg {\rm Orb} _{\langle \varphi \rangle} (x) \geq m = \deg {\rm Orb} _{\mathfrak{G}} (x)$,  whereas
   ${\rm Orb} _{\langle \varphi \rangle} (x) = {\rm Orb} _{\mathfrak{G}} (x)$.
 Indeed, if $\varphi ^{i} (x) = \varphi ^{j} (x)$ for some $0 \leq i < j \leq m-1$ then $x = \varphi ^{j-i} (x)$ implies
 $\varphi ^{j-i} \in  {\rm Stab}_{\mathfrak{G}} (x) \cap \langle \varphi \rangle = \widehat{\langle \varphi \rangle} \cap \langle \varphi \rangle =
  \langle \varphi ^m \rangle$ and $m$ divides $0 < j-i \leq m-1$.
 This is an absurd, justifying  the coincidence  ${\rm Orb} _{\langle \varphi \rangle} (x) = {\rm Orb} _{\mathfrak{G}} (x)$.

 (iii)  It suffices to prove that   $\forall y \in {\rm Orb} _{\mathfrak{G}} (x)$ has stabilizer ${\rm Stab} _{\mathfrak{G}_r} (y) = \mathfrak{G} _{rm_1} \leq \mathfrak{G} _r \leq \mathfrak{G}$, in order to apply (i) and to conclude that
 $$
 \deg {\rm Orb} _{\mathfrak{G}_r} (y) = [ \mathfrak{G}_r : {\rm Stab} _{\mathfrak{G}_r} (y)] =
 \frac{[\mathfrak{G} : {\rm Stab} _{\mathfrak{G}_r} (y)]}{[ \mathfrak{G} : \mathfrak{G}_r]} = \frac{rm_1}{r} = m_1.
 $$
 Bearing in mind that ${\rm Stab}_{\mathfrak{G}_r} (y) = {\rm Stab} _{\mathfrak{G}} (y) \cap \mathfrak{G}_r = \mathfrak{G}_m \cap \mathfrak{G}_r$ and
 ${\rm LCM} (m,r) = \frac{mr}{{\rm GCD} (m,r)} = r m_1 = m r_1$ for $r_1 = \frac{r}{d}$, we reduce the statement to $\mathfrak{G}_m \cap \mathfrak{G}_r = \mathfrak{G} _{{\rm LCM} (m,r)}$ for arbitrary $m, r \in \NN$.
 The quotient group $\mathfrak{G}_r / \mathfrak{G}_m \cap \mathfrak{G}_r \simeq \mathfrak{G}_r \mathfrak{G}_m / \mathfrak{G}_m$ is finite as a subgroup of $\mathfrak{G} / \mathfrak{G}_m \simeq (\ZZ_m, +)$.
 Therefore $\mathfrak{G}_m \cap \mathfrak{G}_r$ is a closed subgroup of $\mathfrak{G}$ of finite index
 $$
 [ \mathfrak{G} : \mathfrak{G} _m \cap \mathfrak{G}_r] = [ \mathfrak{G} : \mathfrak{G}_r]
 [ \mathfrak{G}_r : \mathfrak{G}_m \cap \mathfrak{G}_r] \leq rm
 $$
  in $\mathfrak{G}$ and $\mathfrak{G}_m \cap \mathfrak{G}_r = \mathfrak{G}_s$ for
 $s := [ \mathfrak{G} : \mathfrak{G}_m \cap \mathfrak{G}_r]$.
The inclusions $\mathfrak{G}_s \leq \mathfrak{G}_m \leq \mathfrak{G}$ imply that $s = [ \mathfrak{G} : \mathfrak{G}_s] = [ \mathfrak{G} : \mathfrak{G}_m]
[ \mathfrak{G}_m : \mathfrak{G}_s]  = m [ \mathfrak{G}_m : \mathfrak{G}_s]$ is a multiple of $m$.
For similar reasons, $s$ is a multiple of $r$ and ${\rm LCM} (m,r) \in \NN$ divides $s$.
On the other hand, $\mathfrak{G}_{{\rm LCM} (m,r)} = \widehat{ \langle \varphi ^{m r_1} \rangle} = \widehat{ \langle \varphi ^{r m_1} \rangle}$ is contained in $\mathfrak{G}_m = \widehat{ \langle \varphi ^m \rangle}$ and $\mathfrak{G}_r = \widehat{\langle \varphi ^r \rangle}$, whereas $\mathfrak{G} _{{\rm LCM} (m,r)} \leq \mathfrak{G}_m \cap \mathfrak{G}_r = \mathfrak{G}_s \leq \mathfrak{G}$.
As a result, $s$ divides ${\rm LCM} (m,r)$ and $s = {\rm LCM} (m,r)$.

\end{proof}

The next proposition provides a numerical description of a $\mathfrak{G}$-equivariant map $\xi : M \rightarrow L$, $\xi ( \gamma y) = \gamma \xi (y)$ for $\forall \gamma \in \mathfrak{G}$, $\forall y \in M$ on a preimage of a $\mathfrak{G}$-orbit, by the means of the inertia indices of $\xi$.
Note that the image $\xi (M)$ is $\mathfrak{G}$-invariant and for any complete set $\Sigma _{\mathfrak{G}} ( \xi (M)) \subseteq \xi (M)$ of $\mathfrak{G}$-orbit representatives, the $\mathfrak{G}$-orbit decomposition $\xi (M) = \coprod\limits_{x \in \Sigma _{\mathfrak{G}} ( \xi (M))} {\rm Orb} _{\mathfrak{G}} (x)$ pulls back to a disjoint $\mathfrak{G}$-module decomposition
\begin{equation}   \label{PullBackModuleDecomposition}
M = \coprod\limits _{x \in \Sigma _{\mathfrak{G}} ( \xi (M))} \xi ^{-1} {\rm Orb} _{\mathfrak{G}} (x).
\end{equation}
Thus, $\xi : M \rightarrow L$ is completely determined by its surjective $\mathfrak{G}$-equivariant restrictions
$$
\xi : \xi ^{-1} {\rm Orb} _{\mathfrak{G}} (x) \longrightarrow {\rm Orb} _{\mathfrak{G}} (x) \ \ \mbox{  for } \ \
 \forall x \in \Sigma _{\mathfrak{G}} ( \xi (M)).
$$

\begin{proposition-definition}    \label{InertiaIndices}
 For an arbitrary   $\mathfrak{G}$-equivariant map $\xi:  M \rightarrow L$    of locally finite modules over $\mathfrak{G} = \widehat{ \langle \varphi \rangle}$,  let us consider the maps
 $$
 \delta = \deg {\rm Orb} _{\mathfrak{G}} : L \longrightarrow \NN,  \ \     \delta (x) = \deg {\rm Orb} _{\mathfrak{G}} (x) \ \ \mbox{ for } \ \
 \forall x \in L \ \ \mbox{  and }
 $$
$$
e_{\xi} : M \longrightarrow \QQ ^{>0}, \ \
 e_{\xi} (y) = \frac{\deg {\rm Orb} _{\mathfrak{G}} (y)}{\deg {\rm Orb} _{\mathfrak{G}} ( \xi (y))} \ \ \mbox{  for } \ \ \forall y \in M.
$$
Then:

(i) ${\rm Stab} _{\mathfrak{G}} (y)  \leq  {\rm Stab} _{\mathfrak{G}} ( \xi (y)) \leq \mathfrak{G}$ for $\forall y \in M$ and
$e_{\xi} (y) = [ {\rm Stab} _{\mathfrak{G}} ( \xi (y)) : {\rm Stab} _{\mathfrak{G}} (y)] \in \NN$ takes natural values;

(ii)  for any $x \in \xi (M)$ there is a subset $S_x \subseteq \xi ^{-1} (x)$, such that
\begin{equation}    \label{OrbitDecompositionOfPulledBackOrbit}
\xi ^{-1} {\rm Orb} _{\mathfrak{G}} (x) = \coprod\limits _{y \in S_x} {\rm Orb} _{\mathfrak{G}} (y) \ \ \mbox{  with } \ \
 \deg {\rm Orb} _{\mathfrak{G}} (y) = \delta (x) e_{\xi} (y);
\end{equation}

(iii)  for any $x \in \xi (M)$ the fibre $\xi ^{-1} (x)$ is a $\mathfrak{G}_{\delta (x)}$-module with orbit decomposition
\begin{equation}    \label{OrbitDecompositionOfFibre}
\xi ^{-1} (x) = \coprod\limits _{y \in S_x} {\rm Orb} _{\mathfrak{G}_{\delta (x)}}  (y) \ \ \mbox{ of } \ \
 \deg {\rm Orb} _{\mathfrak{G}_{\delta (x)}}  (y) = e_{\xi} (y).
\end{equation}

The correspondence $e_{\xi} : M \rightarrow \NN$ will be referred to as the inertia map of $\xi : M \rightarrow L$.
The values $e_{\xi} (y)$, $y \in M$ of $e_{\xi}$ are called inertia indices of $\xi$.
\end{proposition-definition}

\begin{proof}

(i) For arbitrary $y \in M$ and $\gamma \in {\rm Stab} _{\mathfrak{G}} (y)$, note that $\gamma \xi (y) = \xi ( \gamma y) = \xi (y)$, in order to conclude that ${\rm Stab} _{\mathfrak{G}} (y) \leq {\rm Stab} _{\mathfrak{G}} ( \xi (y)) \leq \mathfrak{G}$.
By Lemma \ref{OrbStab}(i), if $\lambda (y) := \deg {\rm Orb} _{\mathfrak{G}} (y) $ then
${\rm Stab} _{\mathfrak{G}} (y) = \mathfrak{G}_{\lambda (y)} = \widehat{ \langle \varphi ^{\lambda (y)} \rangle}$,
${\rm Stab} _{\mathfrak{G}} ( \xi (y)) = \mathfrak{G} _{\delta (\xi (y))} = \widehat{ \langle \varphi ^{\delta ( \xi (y))} \rangle}$,
so that the inertia index
$$
e_{\xi} (y) := \frac{\deg {\rm Orb} _{\mathfrak{G}} (y)}{\deg {\rm Orb} _{\mathfrak{G}} ( \xi (y))} = \frac{\lambda (y)}{\delta ( \xi (y))} =
\frac{[ \mathfrak{G} : {\rm Stab} _{\mathfrak{G}} (y)]}{[ \mathfrak{G} : {\rm Stab} _{\mathfrak{G}} ( \xi (y))]} =
[ {\rm Stab} _{\mathfrak{G}} ( \xi (y)) : {\rm Stab} _{\mathfrak{G}} (y)]  \in \NN
$$
is a natural number.

(ii If $x \in \xi (M)$ then  an arbitrary  $\mathfrak{G}$-orbit on $\xi ^{-1} {\rm Orb} _{\mathfrak{G}} (x)$ is claimed to  intersect the fibre
 $\xi ^{-1} (x)$.
Indeed,   $\xi (z) = \varphi ^s (x)$ for some $z \in M$ and $0 \leq s \leq \delta (x) -1$  implies
 $\xi ( \varphi ^{\delta (x) -s} z) = \varphi ^{ \delta (x) -s} \xi (z)  = x$, whereas
  $y:= \varphi ^{ \delta (x) -s} (z) \in \xi ^{-1} (x)$  with ${\rm Orb} _{\mathfrak{G}} (z) = {\rm Orb} _{\mathfrak{G}} (y)$.
 That allows to choose a complete set $S_x \subseteq \xi ^{-1} (x)$ of $\mathfrak{G}$-orbit representatives on $\xi ^{-1} {\rm Orb} _{\mathfrak{G}} (x)$ and to obtain (\ref{OrbitDecompositionOfPulledBackOrbit}) with   $\deg {\rm Orb} _{\mathfrak{G}} (y) = \lambda (y) = \delta (x) e_{\xi (y)}$ by (i).

 (iii)  If $x \in \xi (M)$, $\deg {\rm Orb} _{\mathfrak{G}} (x) = \delta (x)$ and $y \in \xi ^{-1} (x)$ then
  $\xi ( \varphi ^{\delta (x)} (y)) = \varphi ^{\delta (x)} \xi (y) = \varphi ^{\delta (x)} (x) = x$, whereas
  $\varphi ^{\delta (x)} (y) \in \xi ^{-1} (x)$ and $\mathfrak{G} _{\delta (x)} = \widehat{ \langle \varphi ^{\delta (x)} \rangle}$ acts on $\xi ^{-1} (x)$.
  That justifies the inclusion $\cup _{y \in S_x} {\rm Orb} _{\mathfrak{G} _{\delta (x)}} (y) \subseteq \xi ^{-1} (x)$.
  For any $y, y'\in S_x$ the assumption $y'\in {\rm Orb} _{\mathfrak{G} _{\delta (x)}} (y) \subseteq {\rm Orb} _{\mathfrak{G}} (y)$ implies that $y'= y$, so that the union $\coprod\limits _{y \in S_x} {\rm Orb} _{\mathfrak{G}_{\delta (x)}} (y)$ is disjoint.
  By the very definition of $S_x$, any
   $z \in \xi ^{-1} (x) \subset \xi ^{-1} {\rm Orb} _{\mathfrak{G}} (x) = \coprod\limits _{y \in S_x} {\rm Orb} _{\mathfrak{G}} (y)$ is of the form
   $z = \varphi ^s (y)$ for some $y \in S_x$ and $0 \leq s \leq \delta (x) e_{\xi} (y)$.
 Due  to $x = \xi (z) = \xi ( \varphi ^s (y))  = \varphi ^s \xi (y) = \varphi ^s (x)$, there follows
   $\varphi ^s \in {\rm Stab} _{\mathfrak{G}} (x) \cap \langle \varphi \rangle = \widehat{\langle  \varphi ^{\delta (x)} \rangle} \cap \langle \varphi \rangle =
    \langle \varphi ^{\delta (x)} \rangle$, whereas  $s = \delta (x) r$ for some  $r \in \ZZ^{\geq 0}$.
    Thus, $z = \varphi ^{\delta (x)r} (y) \in {\rm Orb} _{\mathfrak{G}_{\delta (x)}} (y)$ and
    $\xi ^{-1} (x) \subseteq \coprod\limits _{y \in S_x} {\rm Orb} _{\mathfrak{G} _{\delta (x)}} (y)$.
   That justifies the $\mathfrak{G}_{\delta (x)}$-orbit decomposition (\ref{OrbitDecompositionOfFibre}).
By (ii) and the proof of Lemma \ref{OrbStab} (iii),  one has
${\rm Stab} _{\mathfrak{G} _{\delta (x)}} (y) = {\rm Stab} _{\mathfrak{G}} (y) \cap \mathfrak{G}_{\delta (x)} =
 \mathfrak{G} _{\delta (x) e_{\xi} (y)} \cap \mathfrak{G}_{\delta (x)} = \mathfrak{G}_{\delta (x) e_{\xi} (y)}$,
  as far as  ${\rm LCM} ( \delta (x) e_{\xi} (y), \delta (x)) = \delta (x) e_{\xi} (y)$.
 Now, Lemma \ref{OrbStab}(i) applies to provide
 $\deg {\rm Orb} _{\mathfrak{G} _{\delta (x)}} (y) = [ \mathfrak{G} _{\delta (x)} : {\rm Stab} _{\mathfrak{G}_{\delta (x)}} (y)] =
   \frac{[\mathfrak{G} : \mathfrak{G} _{\delta (x) e_{\xi} (y)}]}{[\mathfrak{G} : \mathfrak{G} _{\delta (x)}]} =   e_{\xi} (y)$.

\end{proof}

\section{ Locally finite modules with a polynomial $\zeta$-quotient }

In order to provide two more expressions for the $\zeta$-function of a locally finite module $M$ over $\mathfrak{G} = \widehat{ \langle \varphi \rangle}$,
$\langle \varphi \rangle \simeq ( \ZZ, +)$, let us recall that on an arbitrary smooth irreducible curve $X / \FF_q \subseteq \PP^n ( \overline{\FF_q})$, defined over $\FF_q$, the fixed points
$$
X^{\Phi _q ^r} := \{ x \in X \, \vert \,  \Phi _q ^r (x) = x \} = X( \FF_{q^r})
$$
of an arbitrary power $\Phi _q  ^r$, $r \in \NN$ of the Frobenius automorphism $\Phi _q$ coincide with the $\FF_{q^r}$-rational ones.
That is why, for an arbitrary bijective self-map $\varphi : M \rightarrow M$, which is of infinite order in the symmetric group ${\rm Sym} (M)$ of $M$, the fixed points
$$
M^{\varphi ^r} := \{ x \in M \, \vert \, \varphi ^r (x) = x \}
$$
of   $\varphi ^r$ with  $r \in \NN$  are called $\varphi ^r$-rational.
Note that if $\deg {\rm Orb} _{\mathfrak{G}} (x) = m$ then $x \in M^{\varphi ^r}$ if and only if $\varphi ^r \in {\rm Stab} _{\mathfrak{G}} (x) = \mathfrak{G} _m = \widehat{ \langle \varphi ^m \rangle}$ and this holds exactly when $m$ divides $r$.
Since any $r \in \NN$ has finitely many natural divisors $m$ and for any $m \in \NN$ there are at most finitely many $\mathfrak{G}$-orbits on $M$ of degree $m$, the sets $M^{\varphi ^r}$ are finite.

Let us consider the free abelian group $({\rm Div} (M), +)$, generated by the $\mathfrak{G}$-orbits $\nu \in {\rm Orb} _{\mathfrak{G}} (M)$.
Its elements are called divisors on $M$ and are of the form $D = a_1 \nu _1 + \ldots a_s \nu _s$ for some $\nu _j \in {\rm Orb} _{\mathfrak{G}} (M)$,
 $a_j \in \ZZ$.
 The terminology arises from the case of a smooth irreducible curve $X / \FF_q \subseteq \PP^n ( \overline{\FF_q})$, in which the
  $\mathfrak{G} = {\rm Gal} ( \overline{\FF_q} / \FF_q)$-orbits $\nu$  are in a bijective correspondence with the places $\widetilde{\nu}$ of the function field $\FF_q (X)$ of $X$ over $\FF_q$.
  If $R_{\nu}$ is the discrete valuation ring, associated with the place $\widetilde{\nu}$   then the residue field $R_{\nu} / \mathfrak{M}_{\nu}$ of $R_{\nu}$ is of degree $[ R_{\nu} / \mathfrak{M} _{\nu} : \FF_q] = \deg \nu = \left| \nu \right|$.

  Note that the degree of a $\mathfrak{G}$-orbit extends to a group homomorphism
  $$
  \deg : ({\rm Div} (M), +) \longrightarrow (\ZZ, +),
  $$
  $$
  \deg \left(  \sum\limits _{\nu \in {\rm Orb} _{\mathfrak{G}} (M)} a_{\nu} \nu \right) = \sum\limits _{\nu \in {\rm Orb} _{\mathfrak{G}} (M)} a_{\nu} \deg \nu.
  $$
  A divisor $D = a_1 \nu _1 + \ldots + a_s \nu _s \geq 0$ is effective if all of its non-zero coefficients are positive.
  Let ${\rm Div} _{\geq 0} (M)$ be the set of the effective divisors on $M$.
Note that  the effective divisors $D = a_1 \nu _1 + \ldots + a_s \nu _s \geq 0$ of degree
$\deg D = a_1 \deg \nu _1 + \ldots + a_s \deg \nu _s = m \in \ZZ^{\geq 0}$ have bounded coefficients $1 \leq a_j \leq m$ and bounded degrees $\deg \nu _j \leq m$ of the $\mathfrak{G}$-orbits from the support of $D$.
Bearing in mind that $M$ has at most finitely many $\mathfrak{G}$-orbits $\nu _j$ of degree $\deg \nu _j \leq m$, one concludes that there are at most finitely many effective divisors on $M$ of degree $m \in \ZZ ^{\geq 0}$ and denotes their number by $\cA _m (M)$.

The following lemma generalizes two of the well known expressions of the local Weil $\zeta$-function $\zeta _X(t)$ of a curve
 $X / \FF_q \subset \PP^n ( \overline{\FF_q})$ to the $\zeta$-function of  any  locally finite $\mathfrak{G} = \widehat{\langle \varphi \rangle}$-module $M$.
 The proofs are similar to the ones for   $X / \FF_q \subseteq \PP^n ( \overline{\FF_q})$, given in \cite{NX} or in \cite{St}.

\begin{proposition}    \label{DzetaFunctionExpressions}
Let $\mathfrak{G} = \widehat{ \langle \varphi \rangle}$ be the profinite completion of an infinite cyclic group $\langle \varphi \rangle \simeq (\ZZ, +)$ and $M$ be a locally finite $\mathfrak{G}$-module.
Then the $\zeta$-function of $M$ equals
$$
\zeta _M(t) = \exp \left( \sum\limits _{r=1} ^{\infty} \left| M ^{\varphi ^r} \right| \frac{t^r}{r} \right) = \sum\limits _{m=0} ^{\infty} \cA _m (M),
$$
where $\left| M ^{\varphi ^r} \right|$ is the number of $\varphi ^r$-rational points on $M$ and $\cA _m (M)$ is the number of the effective divisors on $M$ of degree $m \in \ZZ ^{\geq 0}$.
\end{proposition}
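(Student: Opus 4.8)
The plan is to work entirely inside the formal power series ring $\CC[[t]]$ and to manipulate the Euler‑product definition of $\zeta _M(t)$, the local finiteness of $M$ ensuring that every rearrangement reduces, in each fixed $t$‑degree, to an operation on finitely many terms. First I would check that $\zeta _M(t) = \prod _{\nu \in {\rm Orb}_{\mathfrak{G}}(M)} (1 - t^{\deg \nu})^{-1}$ is a well‑defined element of $1 + t\CC[[t]]$. Indeed $(1 - t^{\deg \nu})^{-1} \equiv 1 \pmod{t^{\deg \nu}}$, and by Definition \ref{LocallyFiniteModule} only finitely many orbits $\nu$ satisfy $\deg \nu \leq N$ for a given $N$; hence the partial products stabilize modulo $t^{N+1}$, so the product converges in the $(t)$‑adic topology and has constant term $1$ (in agreement with $\zeta _M(0)=1$).

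For the equality $\zeta _M(t) = \sum _{m \geq 0} \cA _m(M) t^m$, I would expand each factor as a geometric series $(1 - t^{\deg \nu})^{-1} = \sum _{a_\nu \geq 0} t^{a_\nu \deg \nu}$ and multiply the product out. A monomial in the resulting expansion is indexed by a choice $a_\nu \in \ZZ^{\geq 0}$, one per orbit, with $a_\nu = 0$ for all but finitely many $\nu$ — that is, by an effective divisor $D = \sum _\nu a_\nu \nu \in {\rm Div}_{\geq 0}(M)$ — and it equals $t^{\sum _\nu a_\nu \deg \nu} = t^{\deg D}$. Collecting monomials of equal degree and using the fact, established in the paragraph preceding the statement, that there are exactly $\cA _m(M) < \infty$ effective divisors of degree $m$, the coefficient of $t^m$ in $\zeta _M(t)$ is $\cA _m(M)$.

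For the equality $\zeta _M(t) = \exp \bigl( \sum _{r \geq 1} |M^{\varphi ^r}| \frac{t^r}{r} \bigr)$, I would apply the formal logarithm $\log : 1 + t\CC[[t]] \to t\CC[[t]]$, which is $(t)$‑adically continuous and carries products to sums, obtaining
$$
\log \zeta _M(t) = \sum _{\nu \in {\rm Orb}_{\mathfrak{G}}(M)} \log \frac{1}{1 - t^{\deg \nu}} = \sum _{\nu} \sum _{k=1}^{\infty} \frac{t^{k \deg \nu}}{k}.
$$
Regrouping by the exponent $r = k \deg \nu$ — legitimate because for each $r$ only the finitely many orbits with $\deg \nu \mid r$ contribute to the coefficient of $t^r$ — that coefficient equals $\frac{1}{r} \sum _{d \mid r} d\, N_d$, where $N_d \in \ZZ^{\geq 0}$ denotes the (finite) number of $\mathfrak{G}$‑orbits on $M$ of degree $d$. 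Finally I would invoke the observation recorded earlier in the section that a point $x$ with $\deg {\rm Orb}_{\mathfrak{G}}(x) = d$ lies in $M^{\varphi ^r}$ precisely when $d \mid r$; hence $M^{\varphi ^r}$ is the disjoint union of all $\mathfrak{G}$‑orbits whose degree divides $r$, so $|M^{\varphi ^r}| = \sum _{d \mid r} d\, N_d$. Therefore $\log \zeta _M(t) = \sum _{r \geq 1} |M^{\varphi ^r}| \frac{t^r}{r}$, and applying $\exp$ yields the assertion.

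The only delicate point — and the step I would write out most carefully — is the bookkeeping of the infinite product and the interchanges of summation. Every such step is controlled by the same two finiteness inputs: Definition \ref{LocallyFiniteModule} supplies finitely many orbits of each degree (whence the product converges $(t)$‑adically and $N_d < \infty$), and the discussion preceding the statement supplies $\cA _m(M) < \infty$ and $|M^{\varphi ^r}| < \infty$. No genuine analytic convergence is involved; all the identities above hold in $\CC[[t]]$.
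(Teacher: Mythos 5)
Your proof is correct and follows essentially the same route as the paper's: both establish $\zeta_M(t)=\sum_m\cA_m(M)t^m$ by expanding each Euler factor geometrically and identifying monomials with effective divisors, and both derive the exponential form by taking formal logarithms, regrouping by exponent, and invoking the identity $|M^{\varphi^r}|=\sum_{d\mid r}d\,N_d$ (the paper's $B_d(M)$ is your $N_d$). The only difference is that you spell out the $(t)$-adic convergence of the infinite product more explicitly, which the paper leaves implicit.
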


\begin{proof}

If $B_k (M)$ is the number of $\mathfrak{G}$-orbits on $M$ of degree $k$ then
$$
\zeta _M (t) := \prod\limits _{\nu \in {\rm Orb} _{\mathfrak{G}} (M)} \left( \frac{1}{1 - t ^{\deg \nu}} \right) =
\prod\limits _{k=1} ^{\infty} \left( \frac{1}{1 - t^k} \right) ^{B_k (M)} .
$$
Therefore
\begin{align*}
\log \zeta _M(t) = - \sum\limits _{k=1} ^{\infty} B_k (M) \log (1 - t^k) =
 \sum\limits _{k=1} ^{\infty} B_k (M) \left( \sum\limits _{n=1} ^{\infty} \frac{t^{kn}}{n} \right) =
  \sum\limits _{r=1} ^{\infty} \left( \sum\limits _{k / r} k B_k (M) \right) \frac{t^r}{r},
\end{align*}
according to
\begin{equation}   \label{LogarithmExpansion}
\log (1 - z) = - \sum\limits _{r=1} ^{\infty} \frac{z^r}{r} \in \QQ [[ z ]],
\end{equation}

If $M^{\varphi ^r} = \coprod\limits _{\deg {\rm Orb} _{\mathfrak{G}} (x) /r} {\rm Orb} _{\mathfrak{G}} (x)$ is the decomposition of $M^{\varphi ^r}$ into a disjoint union of $\mathfrak{G}$-orbits then the number of $\varphi ^r$-rational points on $M$ is
\begin{equation}   \label{RationalPointsAndOrbits}
\left| M^{\varphi ^r} \right| = \sum\limits _{k /r} k B_k (M),
\end{equation}
whereas   $\log \zeta _M(t) = \sum\limits _{r=1} ^{\infty} \left| M^{\varphi ^r} \right| \frac{t^r}{r}.$

On the other hand, note that
$$
\zeta _M(t) = \prod\limits _{\nu \in {\rm Orb} _{\mathfrak{G}} (M)} \left( \sum\limits _{n=0} ^{\infty} t^{\deg (n \nu)} \right) =
\sum _{D \in {\rm Div} _{\geq 0} (M)} t^{\deg D} = \sum\limits _{m=0} ^{\infty} \cA _m (M) t^m.
$$

\end{proof}

\begin{corollary}   \label{DzetaFunctionDeterminesModuleStructure}
Locally finite $\mathfrak{G} = \widehat{ \langle \varphi \rangle}$-modules $M$, $L$ admit a bijective $\mathfrak{G}$-equivariant map $\xi : M \rightarrow L$ if and only if their $\zeta$-functions $\zeta _M(t) = \zeta _L(t)$ coincide.
\end{corollary}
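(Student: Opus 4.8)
The plan is to prove both implications. The forward direction is nearly immediate: if there is a bijective $\mathfrak{G}$-equivariant map $\xi : M \rightarrow L$, then $\xi$ carries each $\mathfrak{G}$-orbit of $M$ bijectively onto a $\mathfrak{G}$-orbit of $L$, preserving degree (since $\deg {\rm Orb}_{\mathfrak{G}}(x)$ is just its cardinality and $\xi$ is a bijection on orbits). Hence the multiset of orbit degrees of $M$ equals that of $L$, so the defining products in Definition \ref{ZetaFunction} agree term by term and $\zeta_M(t) = \zeta_L(t)$.

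For the converse, the idea is to reconstruct the $\mathfrak{G}$-module structure from the $\zeta$-function. The crucial observation is that $\zeta_M(t) = \prod_{k=1}^{\infty} (1 - t^k)^{-B_k(M)}$, where $B_k(M)$ is the number of $\mathfrak{G}$-orbits on $M$ of degree $k$ (as recorded in the proof of Proposition \ref{DzetaFunctionExpressions}). So I would first show that the power series $\zeta_M(t) \in \CC[[t]]$ determines all the numbers $B_k(M)$. This is a standard uniqueness-of-factorization argument in $\CC[[t]]$: from the coefficients of $\zeta_M(t)$ one recovers $\log \zeta_M(t) = \sum_{r=1}^{\infty} \left( \sum_{k\mid r} k B_k(M) \right) \frac{t^r}{r}$, hence the values $c_r := \sum_{k \mid r} k B_k(M) = |M^{\varphi^r}|$, and then Möbius inversion (or simply induction on $r$, since $c_r = r B_r(M) + \sum_{k \mid r,\, k < r} k B_k(M)$) recovers each $B_k(M)$ uniquely. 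Therefore $\zeta_M(t) = \zeta_L(t)$ forces $B_k(M) = B_k(L)$ for all $k \in \NN$.

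Once we know $B_k(M) = B_k(L)$ for every $k$, the construction of the bijective $\mathfrak{G}$-equivariant map is straightforward. Choose complete sets of $\mathfrak{G}$-orbit representatives $\Sigma_{\mathfrak{G}}(M)$, $\Sigma_{\mathfrak{G}}(L)$; for each $k$ the sets of degree-$k$ orbit representatives in $M$ and $L$ have the same (finite) cardinality $B_k(M) = B_k(L)$, so fix a bijection between them. For a matched pair of orbits ${\rm Orb}_{\mathfrak{G}}(x) \subseteq M$ and ${\rm Orb}_{\mathfrak{G}}(x') \subseteq L$ of common degree $k$, Lemma \ref{OrbStab}(i),(ii) gives ${\rm Orb}_{\mathfrak{G}}(x) = \{x, \varphi(x), \dots, \varphi^{k-1}(x)\}$ and ${\rm Stab}_{\mathfrak{G}}(x) = \mathfrak{G}_k = {\rm Stab}_{\mathfrak{G}}(x')$, and likewise for $x'$; define $\xi(\varphi^j(x)) := \varphi^j(x')$ for $0 \le j \le k-1$. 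This is well defined precisely because the two orbits have equal cardinality and equal stabilizer $\mathfrak{G}_k$, and it is $\mathfrak{G}$-equivariant: it suffices to check commutation with the generator $\varphi$, which is clear by construction, and this extends to all of $\mathfrak{G} = \widehat{\langle \varphi \rangle}$ since the action on a finite orbit factors through the finite quotient $\mathfrak{G}/\mathfrak{G}_k \simeq \langle \varphi \rangle / \langle \varphi^k \rangle$. Taking the disjoint union over all matched orbit pairs and invoking the orbit decomposition $M = \coprod_{x \in \Sigma_{\mathfrak{G}}(M)} {\rm Orb}_{\mathfrak{G}}(x)$ yields the desired bijective $\mathfrak{G}$-equivariant $\xi : M \rightarrow L$.

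The main obstacle, modest as it is, lies in the converse: making rigorous that $\zeta_M(t)$, a priori merely a formal power series, genuinely recovers the combinatorial data $(B_k(M))_{k\ge 1}$, and checking that the orbitwise-defined $\xi$ is compatible with the full profinite action rather than just the dense subgroup $\langle\varphi\rangle$. Both points are handled by the remark that each finite $\mathfrak{G}$-orbit of degree $k$ is a $\mathfrak{G}/\mathfrak{G}_k$-set on which $\mathfrak{G}$ acts through the cyclic quotient generated by the image of $\varphi$; I would state this explicitly before assembling $\xi$.
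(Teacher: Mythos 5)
Your proof is correct and follows essentially the same route as the paper: recover $B_k(M) = B_k(L)$ from $\log \zeta_M(t) = \log \zeta_L(t)$ via the power-sum identity $\sum_{k\mid r} kB_k = |M^{\varphi^r}|$ and induction (the paper inducts on the number of prime factors of $r$ rather than invoking M\"obius inversion, but these are the same), then build $\xi$ orbit-by-orbit by matching degree-$k$ orbits and transporting $\varphi$. Your forward direction is actually a bit cleaner than the paper's: you note directly that a bijective $\mathfrak{G}$-equivariant map sends each orbit bijectively onto an orbit of the same cardinality, whereas the paper routes the same conclusion through the fibre decomposition of Proposition-Definition \ref{InertiaIndices}, which is more machinery than necessary here. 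Your explicit remark that the orbitwise definition of $\xi$ extends from $\langle\varphi\rangle$-equivariance to $\mathfrak{G}$-equivariance because each finite orbit is a $\mathfrak{G}/\mathfrak{G}_k$-set is a point the paper leaves implicit in Lemma \ref{OrbStab}(ii); it is good that you flag it.
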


\begin{proof}

If $\xi : M \rightarrow L$ is a bijective $\mathfrak{G}$-equivariant map then for $\forall x \in L$ with $\deg {\rm Orb} _{\mathfrak{G}} (x) = \delta (x)$, then (\ref{OrbitDecompositionOfFibre}) from  Proposition-Definition \ref{InertiaIndices} (iii) provides a decomposition $\xi ^{-1} (x) = \coprod\limits _{y \in S_x} {\rm Orb} _{\mathfrak{G}_{\delta (x)}} (y)$ of the fibre $\xi ^{-1} (x)$ in a disjoint union of $\mathfrak{G}_{\delta (x)}$-orbits of $\deg {\rm Orb} _{\mathfrak{G}_{\delta (x)}} (y) = e_{\xi} (y)$.
Therefore $\left| S_x \right| =1$ for $\forall x \in L$, $e_{\xi} (y) =1$ for $\forall y \in M$ and
$\xi ^{-1} {\rm Orb} _{\mathfrak{G}} (x) = {\rm Orb} _{\mathfrak{G}} \xi ^{-1} (x)$ is of degree $\delta (x)$
by (\ref{OrbitDecompositionOfPulledBackOrbit}) from  Proposition-Definition \ref{InertiaIndices} (ii).
As a result, (\ref{PullBackModuleDecomposition}) takes the form $M = \coprod\limits _{x \in \Sigma _{\mathfrak{G}} (L)} {\rm Orb} _{\mathfrak{G}} \xi ^{-1} (x)$ for any complete set $\Sigma _{\mathfrak{G}} (L)$ of $\mathfrak{G}$-orbit representatives on $L$ and the $\zeta$-functions
$
\zeta _M(t) = \prod\limits _{x \in \Sigma _{\mathfrak{G}} (L)} \left( \frac{1}{1 - t^{\delta (x)}} \right) = \zeta _L(t)
$
coincide.

Conversely, assume that the locally finite $\mathfrak{G}$-modules $M$ and $L$ have one and a same $\zeta$-function $\zeta _M(t) = \zeta _L(t)$.
Then by Proposition \ref{DzetaFunctionExpressions}, there follows the equality
$$
\sum\limits _{r=1} ^{\infty} \left| M ^{\varphi ^r} \right| \frac{t^r}{r} =
 \log \zeta _M(t) =
 \log \zeta _L(t) =
  \sum\limits _{r=1} ^{\infty} \left| L^{\varphi ^r} \right| \frac{t^r}{r} \in \QQ [[ t ]]
$$
of formal power series of $t$ and the equalities
$$
\sum\limits _{d / r} d B_d (M) = \left| M ^{\varphi ^r} \right| = \left| L ^{\varphi ^r} \right| = \sum\limits _{d /r} d B_d (L)
$$
of their coefficients for $\forall r \in \NN$.
By an induction on the number of the prime divisors of $r$, counted with their multiplicities, one derives the equalities $B_d (M) = B_d (L)$ of the numbers of the $\mathfrak{G}$-orbits of degree $d$ for $\forall d \in \NN$.

For an arbitrary $k \in \NN$, note that   $M^{( \leq k)} := \{ x \in M \, \vert \, \deg {\rm Orb} _{\mathfrak{G}} (x) = \delta (x) \leq k \}$   is a $\mathfrak{G}$-submodule of $M$ and $M = \cup _{k \in \NN} M^{( \leq k)}$.
By an induction on $k$, we construct a bijective $\mathfrak{G}$-equivariant map $\xi : M ^{( \leq k)} \rightarrow L^{( \leq k)}$, in order to show the existence of a $\mathfrak{G}$-equivariant bijection $\xi : M \rightarrow L$.
More precisely, if $M^{( \leq 1)} = M^{\varphi} =  \left \{ y_1 ^{(1)}, \ldots , y_{B_1(L)}^{(1)} \right \}$ and
$L^{( \leq 1)} = L^{\varphi} =  \left \{ x_1 ^{(1)}, \ldots , x_{B_1 (L)} ^{(1)} \right  \}$ then we  set  $\xi  ( y_i ^{(1)} ) = x_i ^{(1)}$
for $\forall 1 \leq i \leq B_1 (L)$ and obtain a bijective $\mathfrak{G}$-equivariant map $\xi : M^{( \leq 1)} \rightarrow L^{( \leq 1)}$.
If $\xi : M^{( \leq k-1)}  \rightarrow L^{( \leq k-1)}$ is constructed by the inductional hypothesis and
$\{ x_i ^{(k)} \in L \, \vert \, 1 \leq i \leq B_k (L) \}$, respectively,  $\{ y_i ^{(k)} \in M \, \vert \, 1 \leq i \leq B_k (L) \}$ are complete sets of representatives of the $\mathfrak{G}$-orbits of degree $k$ on $L$, respectively, on $M$, then $\xi ( \varphi ^s ( y_i ^{(k)} )) = \varphi ^s ( x_i ^{(k)})$ for $\forall 0 \leq s \leq k-1$, $\forall 1 \leq i \leq B_k (L)$ provides a $\mathfrak{G}$-equivariant  bijective extension $\xi : M^{( \leq k)} \rightarrow L^{( \leq k)}$  of  $\xi : M^{( \leq k-1)} \rightarrow L^{( \leq k-1)}$.
Since  $M^{( \leq k)}$ exhaust $M$ and $L^{( \leq k)}$ exhaust $L$, that establishes the existence of a $\mathfrak{G}$-equivariant bijective map
 $\xi : M \rightarrow L$.

\end{proof}

Form now on, we identify the locally finite modules $M$ and $L$, which admit a  $\mathfrak{G}$-equivariant bijective map $\xi : M \rightarrow L$.

\begin{lemma}    \label{DzetaQuotient}
If $M$ is a locally finite $\mathfrak{G} = {\rm Gal} ( \overline{\FF_q} / \FF_q)$-module with $\zeta$-function $\zeta _M(t) \in \ZZ [[ t ]]$ then the quotient
$$
P_M(t) = \frac{\zeta _M(t)}{\zeta _{\PP^1 ( \overline{\FF_q})} (t)} = \sum\limits _{m=0} ^{\infty}  a  _m (M) t^m \in \ZZ [[ t ]] ^*
$$
is a formal power series with integral coefficients $a_m \in \ZZ$, which is invertible in $\ZZ [[ t ]]$.
Its coefficients $ a _m (M) \in \ZZ$ satisfy the equality
$$
\cA _m (M) = \sum\limits _{i=0} ^m  a  _i (M) \left| \PP ^{m-i} ( \FF_q) \right|
$$
and can be interpreted as ''multiplicities'' of the projective spaces  $\PP^{m-i} ( \FF_q)$, ''exhausting'' the effective divisors on $M$ of degree $m$.
\end{lemma}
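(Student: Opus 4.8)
The plan is to read off everything directly from $\zeta_{\PP^1(\overline{\FF_q})}(t) = \frac{1}{(1-t)(1-qt)}$. First I would write
$$
P_M(t) = \frac{\zeta_M(t)}{\zeta_{\PP^1(\overline{\FF_q})}(t)} = \zeta_M(t)\,(1-t)(1-qt) = \zeta_M(t)\bigl(1 - (q+1)t + qt^2\bigr).
$$
Since by hypothesis $\zeta_M(t) \in \ZZ[[t]]$ and $(1-t)(1-qt) \in \ZZ[t]$, the product lies in $\ZZ[[t]]$, so $P_M(t) = \sum_{m\ge 0} a_m(M)\,t^m$ with $a_m(M) \in \ZZ$. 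For the assertion $P_M(t) \in \ZZ[[t]]^*$ I would invoke the standard fact that a power series over a commutative unital ring is a unit exactly when its constant term is a unit in that ring; here $a_0(M) = P_M(0) = \zeta_M(0)/\zeta_{\PP^1(\overline{\FF_q})}(0) = 1$ (using $\zeta_M(0)=1$, recorded after Definition~\ref{ZetaFunction}), hence $P_M(t)$ is invertible in $\ZZ[[t]]$.

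The remaining point is the coefficient identity, and the key step is to expand the $\zeta$-function of the projective line with coefficients $\bigl|\PP^k(\FF_q)\bigr|$. Multiplying the geometric series $\frac{1}{1-t} = \sum_{i\ge 0} t^i$ and $\frac{1}{1-qt} = \sum_{j\ge 0} q^j t^j$ (or, equivalently, by partial fractions) gives
$$
\zeta_{\PP^1(\overline{\FF_q})}(t) = \sum_{k=0}^{\infty}\Bigl(\sum_{j=0}^{k} q^{j}\Bigr) t^{k} = \sum_{k=0}^{\infty}\bigl|\PP^{k}(\FF_q)\bigr|\,t^{k},
$$
since $\bigl|\PP^{k}(\FF_q)\bigr| = 1 + q + \dots + q^{k}$. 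Then from $\zeta_M(t) = P_M(t)\,\zeta_{\PP^1(\overline{\FF_q})}(t)$, that is
$$
\sum_{m\ge 0}\cA_m(M)\,t^{m} = \Bigl(\sum_{i\ge 0} a_i(M)\,t^{i}\Bigr)\Bigl(\sum_{k\ge 0}\bigl|\PP^{k}(\FF_q)\bigr|\,t^{k}\Bigr),
$$
comparing the coefficients of $t^{m}$ on both sides yields $\cA_m(M) = \sum_{i=0}^{m} a_i(M)\,\bigl|\PP^{m-i}(\FF_q)\bigr|$, which is precisely the claimed equality; the informal remark about ''multiplicities'' of the spaces $\PP^{m-i}(\FF_q)$ is then just a reading of this formula, with the caveat that the $a_i(M)$ need not be nonnegative.

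I do not expect a genuine obstacle: the whole argument is elementary formal-power-series algebra over $\ZZ$. The only steps that merit an explicit line are the expansion of $\zeta_{\PP^1(\overline{\FF_q})}(t)$ in terms of $\bigl|\PP^k(\FF_q)\bigr|$ (which could alternatively be obtained from Proposition~\ref{DzetaFunctionExpressions} applied to $M = \PP^1(\overline{\FF_q})$, the number $\cA_m(\PP^1(\overline{\FF_q}))$ of effective divisors of degree $m$ on $\PP^1$ being classically equal to $\bigl|\PP^m(\FF_q)\bigr|$), and the remark that the unit criterion is being applied over $\ZZ$, where it forces only $a_0(M) = \pm 1$, a condition we have verified with the value $1$.
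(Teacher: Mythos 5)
Your proof is correct and follows essentially the same route as the paper: multiply $\zeta_M(t)$ by $(1-t)(1-qt)$ to see integrality, observe $a_0(M)=1$ for invertibility, and expand $\zeta_{\PP^1(\overline{\FF_q})}(t)=\sum_k\left|\PP^k(\FF_q)\right|t^k$ to compare coefficients in $\zeta_M=P_M\cdot\zeta_{\PP^1}$. The only cosmetic difference is that the paper records the explicit recursion $a_m(M)=\cA_m(M)-(q+1)\cA_{m-1}(M)+q\cA_{m-2}(M)$ (which it reuses later) and spells out the induction behind the unit criterion rather than citing it, whereas you invoke the standard constant-term-is-a-unit fact directly.
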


\begin{proof}

If $P_M(t) = \sum\limits _{m=0} ^{\infty} a  _m (M) t^m \in \CC [[ t ]]$ is a formal power series with complex coefficients $ a  _m (M) \in \CC$ then the comparison of the coefficients of
$$
\sum\limits _{m=0} ^{\infty}  a  _m (M) t^m =
 P_M(t) =
  \zeta _M(t) (1-t)(1-qt) =
   \left( \sum\limits _{m=0} ^{\infty} \cA _m (M) t^m \right) [1 - (q+1)t + qt^2]
$$
yields
\begin{equation}   \label{KappaFormula}
 a  _m (M) = \cA _m (M) - (q+1) \cA _{m-1} (M) + q \cA _{m-2} (M) \in \ZZ \ \ \mbox{  for } \ \ \forall m \in \ZZ ^{\geq 0},
\end{equation}
as far as $\cA _m (M) \in \ZZ ^{\geq 0}$ for $\forall m \in \ZZ ^{\geq 0}$ and $\cA _{-1} (M) = \cA _{-2} (M) =0$.
In particular, $ a _0 (M) = \cA _0 (M) = \zeta _M(0) =1$ and $P_M(t) = 1 + \sum\limits _{m=1} ^{\infty}  a  _m (M) t^m \in \ZZ [[ t ]]^*$ is invertible by a formal power series $P_M^{-1} (t) = 1 + \sum\limits _{m=1} ^{\infty}  b_m (M) t^m \in \ZZ [[ t ]]$ with integral coefficients.
(The existence of $ b_m (M) \in \ZZ$ with $[1 + \sum\limits _{m=1} ^{\infty}  a  _m (M) t^m ][1 + \sum\limits _{m=1} ^{\infty}  b _m (M) t^m] =1$ follows from
$
 b  _m (M) + \sum\limits _{i=1} ^{m-1}  b  _i (M)  a  _{m-i} (M) +  a  _m (M) =0
$
by an induction on $m \in \NN$.)

The comparison of the coefficients of
$$
\sum\limits _{m=0} ^{\infty} \cA _m (M) t^m =
 \zeta _M(t) =
  P_M(t) \zeta _{\PP^1 ( \overline{\FF_q})} (t) =
\left( \sum\limits _{m=0} ^{\infty}  a   _m (M) t^m \right) \left( \sum\limits _{s=0} ^{\infty} t^s \right)
 \left( \sum\limits _{r=0} ^{\infty} q^r t^r \right)
$$
provides
\begin{equation}  \label{AmDecompositionFormula}
\cA _m (M) = \sum\limits _{i=0} ^m  a  _i (M) \left( \sum\limits _{j=0} ^{m-i} q^{j} \right) =
\sum\limits _{i=0} ^m  a _i (M) \left( \frac{q^{m-i+1} -1}{q-1} \right) =
\sum\limits _{i=0} ^m  a  _i (M) \left| \PP ^{m-i} ( \FF_q) \right|.
\end{equation}

\end{proof}

  According to the Riemann-Roch Theorem  for a divisor $D$ of degree $\deg D = n \geq 2g-1$  on  a smooth irreducible curve $X / \FF_q \subseteq \PP^n ( \overline{\FF_q})$ of genus $g \geq 0$, the linear equivalence class of $D$ is of dimension $n-g+1$ over $\FF_q$.
For any $n \in \ZZ ^{\geq 0}$ there exist one and a same number $h$ of linear equivalence classes of divisors on $X$ of degree $n$.
The natural number $h = P_X(1)$ equals the value of the $\zeta$-polynomial $P_X(t) = \frac{\zeta_X(t)}{\zeta _{\PP^1 ( \overline{\FF_q})} (t)} = \sum\limits _{j=0} ^{2g} a_j t^{j} \in \ZZ [t]$ of $X$ at $1$ and is called the class number of $X$.
Thus, for any natural number $n \geq 2g-1$ there are
$$
\cA _n (X)  = P_X(1) \left| \PP^{n-g} ( \FF_q) \right| = P_X(1) \left( \frac{q^{n-g+1}-1}{q-1} \right)
$$
effective divisors of $X$ of degree $n$.
Note that the $\zeta$-function $\zeta _X (t) = \frac{P_X(t)}{(1-t)(1-qt)}$ has residuums
 ${\rm Res} _{\frac{1}{q}} ( \zeta _X(t)) = \frac{P_X \left( \frac{1}{q} \right)}{1-q}$,
 ${\rm Res} _1 ( \zeta _X(t)) = \frac{P_X(1)}{q-1}$ at its simple poles $\frac{1}{q}$, respectively, $1$.
The $\zeta$-polynomial $P_X(t)$ satisfies the functional equation
$
P_X(t) = P_X \left( \frac{1}{qt} \right) q^g t^{2g},
$
according to Theorem 4.1.13 from \cite{NX} or to Theorem V.1.15 (b) from \cite{St}.
In particular, $P_X \left( \frac{1}{q} \right) = q^{-g} P_X(1)$ and
$$
\cA _n (X) = - q^{n+1} {\rm Res} _{\frac{1}{q}} ( \zeta _X(t)) - {\rm Res} _1 ( \zeta _X (t)) \ \ \mbox{  for } \ \ \forall n \geq 2g-1.
$$

\begin{definition}   \label{GenericRiemannRochConsitions}
A locally finite module $M$ over $\mathfrak{G} = {\rm Gal} ( \overline{\FF_q} / \FF_q)$ satisfies the Generic  Riemann-Roch Conditions if $M$ has
$$
\cA_n (M) = - q^{n+1} {\rm Res} _{\frac{1}{q}} ( \zeta _M(t)) - {\rm Res} _1 ( \zeta _M(t))
$$
effective divisors of degree $n$ for all the sufficiently large natural numbers $n \geq n_o$.
\end{definition}

One can compare the  Generalized Riemann-Roch Conditions  with the Polarized Rie\-mann-Roch Conditions from \cite{KM2}, which are shown to be equivalent to the Mac Williams identities for additive codes and, in particular, for linear codes over finite fields.

Here is a characterization of the locally finite $\mathfrak{G}$-modules $M$ with a polynomial $\zeta$-quotient
 $P_M(t) = \frac{\zeta _M(t)}{\zeta _{\PP^1 ( \overline{\FF_q}) } (t)} \in \ZZ [t]$.

 \begin{proposition}   \label{NSCPolynomialDzetaQuotient}
 The following conditions are equivalent for the $\zeta$-function $\zeta _M(t)$ of a locally finite module $M$ over
 $\mathfrak{G} = {\rm Gal} ( \overline{\FF_q} / \FF_q)$:

 (i) $P_M(t) := \frac{\zeta _M(t)}{\zeta _{\PP^1 ( \overline{\FF_q})} (t)} \in \ZZ [t]$ is a polynomial of $\deg P_M(t) = d \leq \delta \in \NN$;

 (ii)  $M$ satisfies the Generic Riemann-Roch Conditions
 \begin{equation}   \label{GRRCForM}
 \cA _n (M) = - q^{n+1} {\rm Res} _{\frac{1}{q}} ( \zeta _M(t))  - {\rm Res} _1 ( \zeta _M(t)) = \frac{q^{n+1} P_M \left( \frac{1}{q} \right) - P_M (t)}{q-1};
 \end{equation}

 \begin{equation}   \label{PowerSumFormula}
 {\rm (iii)} \ \ \left| \PP^1 ( \overline{\FF_q}) ^{\Phi _q ^r} \right| - \left| M^{\Phi _q ^r} \right| = \sum\limits _{j=1} ^d \omega  _j ^r \ \ \mbox{  for }
 \ \ \forall r \in \NN
 \end{equation}
 and some $\omega _j \in \CC ^*$, which turn to be the reciprocals of the roots of $P_M(t) = \prod\limits _{j=1} ^d (1  - \omega _jt)$.
 \end{proposition}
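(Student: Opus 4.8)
\medskip
\noindent\emph{Proof proposal.} The plan is to split the equivalence into an ``analytic'' part (i)$\Leftrightarrow$(iii), governed by the exponential formula for $\zeta_M(t)$, and an ``arithmetic'' part (i)$\Leftrightarrow$(ii), governed by the effective-divisor count $\cA_m(M)$; in both parts the integrality $P_M(t)\in\ZZ[[t]]$ already provided by Lemma~\ref{DzetaQuotient} is what upgrades ``polynomial over $\CC$'' to ``polynomial over $\ZZ$''. For (i)$\Leftrightarrow$(iii) I would pass to logarithms. Applying Proposition~\ref{DzetaFunctionExpressions} once to $M$ and once to $\PP^1(\overline{\FF_q})$ (whose $\zeta$-function is $\frac{1}{(1-t)(1-qt)}$, with $\left|\PP^1(\overline{\FF_q})^{\Phi_q^r}\right|=q^r+1$) gives in $\QQ[[t]]$
$$
\log P_M(t) = \log\zeta_M(t) - \log\zeta_{\PP^1(\overline{\FF_q})}(t) = -\sum_{r=1}^{\infty}\left(\left|\PP^1(\overline{\FF_q})^{\Phi_q^r}\right| - \left|M^{\Phi_q^r}\right|\right)\frac{t^r}{r}.
$$
On the other hand, if $P_M(t)=\prod_{j=1}^{d}(1-\omega_j t)$ with $\omega_j\in\CC^{*}$, then by (\ref{LogarithmExpansion}) one has $\log P_M(t)=-\sum_{r=1}^{\infty}\bigl(\sum_{j=1}^{d}\omega_j^{r}\bigr)\frac{t^r}{r}$. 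Comparing the coefficients of $t^r$ in these two expansions shows that the existence of such a factorization is equivalent to the power-sum identities (\ref{PowerSumFormula}). For (i)$\Rightarrow$(iii): since $P_M(0)=a_0(M)=1\neq 0$, every root of the polynomial $P_M$ is nonzero, so $P_M(t)=\prod_{j=1}^{d}(1-\omega_j t)$ with $\omega_j\in\CC^{*}$ and (\ref{PowerSumFormula}) is read off. For (iii)$\Rightarrow$(i): the identities (\ref{PowerSumFormula}) reconstruct $\log P_M(t)=\sum_{j=1}^{d}\log(1-\omega_j t)$, hence $P_M(t)=\prod_{j=1}^{d}(1-\omega_j t)\in\CC[t]$; combined with $P_M(t)\in\ZZ[[t]]$ this forces $P_M(t)\in\ZZ[t]$ of degree $d$, and the $\omega_j$ are its reciprocal roots.

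For (i)$\Rightarrow$(ii) I would start from the decomposition (\ref{AmDecompositionFormula}), $\cA_m(M)=\sum_{i=0}^{m}a_i(M)\frac{q^{m-i+1}-1}{q-1}$. If $P_M(t)=\sum_{i=0}^{d}a_i(M)t^{i}$ is a polynomial of degree $d$, then for $m\geq d$ the sum truncates at $i=d$ and separates into its $q^{m+1}$-homogeneous and its constant parts, giving
$$
\cA_m(M) = \frac{1}{q-1}\left(q^{m+1}\sum_{i=0}^{d}a_i(M)q^{-i} - \sum_{i=0}^{d}a_i(M)\right) = \frac{q^{m+1}P_M\!\left(\tfrac1q\right) - P_M(1)}{q-1}.
$$
It then remains to identify $\tfrac{P_M(1/q)}{q-1}$ with $-{\rm Res}_{1/q}\zeta_M(t)$ and $\tfrac{P_M(1)}{q-1}$ with $-{\rm Res}_{1}\zeta_M(t)$, which is the one-line partial-fraction computation for the rational function $\zeta_M(t)=\frac{P_M(t)}{(1-t)(1-qt)}$ using $\tfrac{t-1}{1-t}=-1$ and $\tfrac{t-1/q}{1-qt}=-\tfrac1q$; this produces exactly the Generic Riemann--Roch Conditions (\ref{GRRCForM}) of Definition~\ref{GenericRiemannRochConsitions}, valid from $n_o=d$ on.

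Finally, for (ii)$\Rightarrow$(i): the content of (\ref{GRRCForM}) is that $\cA_n(M)=C_1 q^{n+1}+C_2$ for all $n\geq n_o$ and suitable constants $C_1,C_2$. Substituting this into $\zeta_M(t)=\sum_{m\geq 0}\cA_m(M)t^{m}$ and summing the two geometric tails $\sum_{m\geq n_o}q^{m+1}t^{m}$ and $\sum_{m\geq n_o}t^{m}$ exhibits $\zeta_M(t)$ as a rational function whose only possible poles are simple ones at $t=1$ and $t=1/q$; hence $P_M(t)=\zeta_M(t)(1-t)(1-qt)$ is a polynomial, and by Lemma~\ref{DzetaQuotient} it lies in $\ZZ[t]$, which is (i). The step I expect to need the most care is the precise reading of the residues in Definition~\ref{GenericRiemannRochConsitions} when $\zeta_M(t)$ is a priori only a formal power series: I would phrase (ii) as the existence of constants $C_1,C_2$ with $\cA_n(M)=C_1 q^{n+1}+C_2$ for $n\gg 0$, and then observe that once (i) is in hand these constants are automatically forced to be the stated residues, so the apparent circularity of (\ref{GRRCForM}) dissolves. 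A secondary routine check is that the degenerate cases $P_M(1)=0$ and/or $P_M(1/q)=0$, where $\zeta_M$ has fewer than two poles, leave all the displayed formulas valid verbatim.
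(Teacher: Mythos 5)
Your proof is correct and follows essentially the same route as the paper: (i)$\Leftrightarrow$(iii) via the exponential/logarithmic form of $\zeta_M$ together with expansion (\ref{LogarithmExpansion}), and (i)$\Rightarrow$(ii) via the truncation of (\ref{AmDecompositionFormula}) and a residue computation, exactly as in the text. The only place you diverge is (ii)$\Rightarrow$(i): the paper plugs (\ref{GRRCForM}) into the second-order recursion (\ref{KappaFormula}) and observes directly that $a_m(M)=0$ for $m\geq\delta+1$, whereas you sum the two geometric tails of $\sum\cA_m(M)t^m$ and read off that $\zeta_M(t)$ is a rational function with at most simple poles at $1$ and $1/q$; these are two faces of the same fact, since the characteristic roots of the recursion annihilating $q^{n+1}$ and $1$ are precisely $q$ and $1$, but your phrasing is slightly more conceptual. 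Your closing remark about reading (ii) as ``$\cA_n(M)=C_1q^{n+1}+C_2$ for $n\gg0$'' to avoid interpreting residues of an a priori formal power series is a legitimate and worthwhile clarification that the paper passes over silently.
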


 \begin{proof}

$(i) \Rightarrow (ii)$ If $P_M(t) = \frac{\zeta _M(t)}{\zeta _{\PP^1 ( \overline{\FF_q})} (t)} = \sum\limits _{j=0} ^d a_j t^{j} \in \ZZ [t]$ is a polynomial of $\deg P_M(t) = d \leq \delta \in \NN$ then (\ref{AmDecompositionFormula}) reduces to
$$
\cA _m (M) = \sum\limits _{i=0} ^d a_i \left( \frac{q^{m-i+1}-1}{q-1} \right) = \frac{q^{m+1} P_M \left( \frac{1}{q} \right) - P_M(1)}{q-1} \ \ \mbox{  for } \ \ \forall m \geq \delta.
$$
Moreover, (\ref{AmDecompositionFormula}) implies that
$$
\cA_{\delta -1} (M) = \frac{q^{\delta} \left[ P_M \left( \frac{1}{q} \right) - \frac{a_{\delta}}{q^{\delta}} \right] - \left[ P_M(1) - a_{\delta} \right]}{q-1} = \frac{q^{\delta} P_M \left( \frac{1}{q} \right) - P_M(1)}{q-1}.
$$
Note that the residuums of $\zeta _M(t) = \frac{P_M(t)}{(1-t)(1-qt)}$ at its simple poles are
${\rm Res} _{\frac{1}{q}} \left( \frac{P_M(t)}{(1-t)(1-qt)} \right) = \frac{P_M \left( \frac{1}{q} \right)}{1-q}$, respectively,
${\rm Res} _1 \left( \frac{P_M(t)}{(1-t)(1-qt)}\right) = \frac{P_M(1)}{q-1}$, in order to derive (\ref{GRRCForM}).

$(ii) \Rightarrow (i)$ Plugging (\ref{GRRCForM}) in (\ref{KappaFormula}), one obtains $ a _m (M) =0$ for $\forall m \geq \delta +1$.
Therefore $P_M(t) = \sum\limits _{i=0} ^{\delta}  a _i (M) t^{i} \in \ZZ[t]$ is a polynomial of degree $\deg P_M(t) \leq \delta$.

$(i) \Rightarrow (iii)$  If $P_M(t) = \frac{\zeta _M(t)}{\zeta _{\PP^1 ( \overline{\FF_q})} (t)} \in \ZZ[t]$ is a polynomial of degree $\deg P_M (t) = d \leq \delta$, then $P_M(0) = \frac{\zeta _M(0)}{\zeta _{\PP^1 ( \overline{\FF_q})} (0)} = 1$ allows to express $P_M(t) = \prod\limits _{j=1} ^d (1 - \omega _jt)$ by some complex numbers $\omega _j \in \CC^*$.
According to Proposition \ref{DzetaFunctionExpressions},
\begin{equation}   \label{DzetaFunctionsFormula}
\zeta _M(t) = \exp \left( \sum\limits _{r=1} ^{\infty} \left| M ^{\Phi _q ^r} \right| \frac{t^r}{r} \right) \ \ \mbox{  and } \ \
\zeta _{\PP^1 ( \overline{\FF_q})} (t) = \exp \left( \sum\limits _{r=1} ^{\infty} \left| \PP^1 ( \overline{\FF_q}) ^{\Phi _q ^r} \right| \frac{t^r}{r} \right),
\end{equation}
whereas
$$
\sum\limits _{j=1} ^d \log (1 - \omega _j t) = \log P_M(t) = \log \zeta _M(t) - \log \zeta _{\PP^1 (\overline{\FF_q})} (t) =
\sum\limits _{r=1} ^{\infty} \left( \left| M^{\Phi _q ^r} \right| - \left| \PP^1 ( \overline{\FF_q}) ^{\Phi _q^r} \right| \right) \frac{t^r}{r}.
$$
Making use of (\ref{LogarithmExpansion}), one obtains
$
- \sum\limits _{r=1} ^{\infty} \left( \sum\limits _{j=1} ^d \omega  _j ^r \right) \frac{t^r}{r} =
\sum\limits _{r=1} ^{\infty} \left( \left| M ^{\Phi _q^r} \right| - \left| \PP^1 ( \overline{\FF_q}) ^{\Phi _q^r} \right| \right) \frac{t^r}{r}.
$
The comparison of the coefficients of $\frac{t^r}{r}$ for $\forall r \in \NN$ provides (\ref{PowerSumFormula}).

$(iii) \Rightarrow (i)$ Multiplying (\ref{PowerSumFormula}) by $\frac{t^r}{r}$, summing for $\forall r \in \NN$ and making use of  (\ref{DzetaFunctionExpressions}), (\ref{LogarithmExpansion}), one obtains
$
\log \zeta _{\PP^1 ( \overline{\FF_q})} (t) - \log \zeta _M(t) = - \sum\limits _{j=1} ^d \log (1 - \omega _jt).
$
The change of the sign, followed by an exponentiation reveals that $P_M(t) = \frac{\zeta _M(t)}{\zeta _{\PP^1 ( \overline{\FF_q})} (t)} = \prod\limits _{j=1} ^d (1 - \omega _jt) \in \ZZ[t]$.

 \end{proof}

\begin{corollary}   \label{FiniteParametrizationLFMWithPDzQ}
Let $M$ and $L$ be locally finite $\mathfrak{G} = {\rm Gal} ( \overline{\FF_q} / \FF_q)$-modules with polynomial $\zeta$-quotients
$P_M(t) = \frac{\zeta _M(t)}{\zeta _{\PP^1 ( \overline{\FF_q})} (t)}$, $P_L(t) = \frac{\zeta _L(t)}{\zeta _{\PP^1 ( \overline{\FF_q})}(t)} \in \ZZ[t]$ of degree $\deg P_M(t) \leq \delta$, $\deg P_L(t) \leq \delta$.
Then $M$ and $L$ admit a $\mathfrak{G}$-equivariant isomorphism $\xi : M \rightarrow L$ if and only if they have one and a same number $B_k (M) = B_k(L)$ of $\mathfrak{G}$-orbits of degree $k$ for all $1 \leq k \leq \delta$.
\end{corollary}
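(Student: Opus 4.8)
The plan is to reduce everything to Corollary~\ref{DzetaFunctionDeterminesModuleStructure}, which already yields a $\mathfrak{G}$-equivariant bijection $\xi : M \rightarrow L$ as soon as $\zeta_M(t) = \zeta_L(t)$. The forward implication is immediate and uses neither the polynomiality nor the degree bound: a $\mathfrak{G}$-equivariant bijection $\xi : M \rightarrow L$ carries the $\mathfrak{G}$-orbits of $M$ bijectively onto the $\mathfrak{G}$-orbits of $L$ while preserving their cardinalities, hence $B_k(M) = B_k(L)$ for every $k \in \NN$, in particular for all $1 \leq k \leq \delta$. (Equivalently, one may invoke Corollary~\ref{DzetaFunctionDeterminesModuleStructure} together with the inductive comparison of coefficients performed in its proof.)

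For the converse I would first record the elementary but crucial observation that for each $m \in \ZZ^{\geq 0}$ the integer $\cA_m(M)$ depends only on $B_1(M), \ldots, B_m(M)$. Indeed, the factorization $\zeta_M(t) = \prod_{k=1}^{\infty} (1-t^k)^{-B_k(M)}$ from the proof of Proposition~\ref{DzetaFunctionExpressions} shows that the coefficient $\cA_m(M)$ of $t^m$ is assembled from the finitely many factors with $k \leq m$; equivalently, an effective divisor of degree $m$ is supported on $\mathfrak{G}$-orbits of degree at most $m$ with coefficients at most $m$, so $\cA_m(M)$ is a universal polynomial expression with non-negative integer coefficients in $B_1(M), \ldots, B_m(M)$. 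Consequently the hypothesis $B_k(M) = B_k(L)$ for all $1 \leq k \leq \delta$ forces $\cA_m(M) = \cA_m(L)$ for every $0 \leq m \leq \delta$.

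Next I would feed this into formula~(\ref{KappaFormula}): since $a_m(M) = \cA_m(M) - (q+1)\cA_{m-1}(M) + q\,\cA_{m-2}(M)$ (with $\cA_{-1} = \cA_{-2} = 0$) expresses the coefficient $a_m(M)$ of $P_M(t)$ through the $\cA_i(M)$ with $i \leq m$, we obtain $a_m(M) = a_m(L)$ for all $0 \leq m \leq \delta$. Because $\deg P_M(t) \leq \delta$ and $\deg P_L(t) \leq \delta$, both $a_m(M)$ and $a_m(L)$ vanish for $m > \delta$, whence $P_M(t) = P_L(t)$ in $\ZZ[t]$. Multiplying by $\zeta_{\PP^1(\overline{\FF_q})}(t)$ gives $\zeta_M(t) = \zeta_L(t)$, and Corollary~\ref{DzetaFunctionDeterminesModuleStructure} then supplies the required $\mathfrak{G}$-equivariant bijection $\xi : M \rightarrow L$.

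The only point that needs a little care is the claim that $\cA_m$ is a function of $B_1, \ldots, B_m$ alone, and the bookkeeping that the range $1 \leq k \leq \delta$ suffices precisely because $\deg P_M, \deg P_L \leq \delta$; the rest is a routine propagation of equalities through~(\ref{KappaFormula}). I do not anticipate a genuine obstacle: the content of the corollary is simply that, once the $\zeta$-quotient is known to be a polynomial of controlled degree, the finitely many orbit-counting data $B_1, \ldots, B_\delta$ already pin down the whole sequence $(B_k)_{k \in \NN}$, hence the $\zeta$-function, hence the isomorphism type of $M$.
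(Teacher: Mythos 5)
Your proposal is correct, and it takes a genuinely different route from the paper's. You work with the \emph{divisor-counting} expansion $\zeta_M(t) = \sum_m \cA_m(M)t^m$: the observation that $\cA_m$ is determined by $B_1,\ldots,B_m$, fed into the explicit coefficient recursion (\ref{KappaFormula}), immediately gives $a_m(M)=a_m(L)$ for $m\le\delta$, and the degree bound kills everything above $\delta$, so $P_M=P_L$ and hence $\zeta_M=\zeta_L$. The paper instead works with the \emph{rational-point} expansion $\log\zeta_M(t)=\sum_r |M^{\Phi_q^r}|t^r/r$: it uses (\ref{RationalPointsAndOrbits}) to get $|M^{\Phi_q^r}|=|L^{\Phi_q^r}|$ for $r\le\delta$, then pads $P_M,P_L$ to degree-$\delta$ products $\prod(1-\omega_jt)$, applies the power-sum identity (\ref{PowerSumFormula}), and invokes Newton's identities to propagate $S_r=S'_r$ from $r\le\delta$ to all $r$. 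Both arguments lean on the same two facts — that finitely many $B_k$ control the low-order behaviour of $\zeta_M$, and that the degree bound on $P_M$ controls the tail — but yours avoids Newton's identities entirely by working at the level of formal power-series coefficients rather than power sums; this is a slight simplification, and it also makes the role of the degree bound $\deg P_M,\deg P_L\le\delta$ more transparent (it is exactly what forces $a_m=0$ for $m>\delta$). The forward implication you give by direct orbit transport is fine, as is the paper's appeal to Corollary \ref{DzetaFunctionDeterminesModuleStructure} in both directions.
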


\begin{proof}

According to Corollary \ref{DzetaFunctionDeterminesModuleStructure}, it suffices to show the coincidence $\zeta _M(t) = \zeta _N(t)$ of the corresponding $\zeta$-functions.
Making use of Proposition \ref{DzetaFunctionExpressions},  one  reduces the problem to
$$
\sum\limits _{r=1} ^{\infty} \left| M ^{\Phi _q ^r} \right| \frac{t^r}{r} =
\log \zeta _M(t) = \zeta _L(t) =
\sum\limits _{r=1} ^{\infty} \left| L ^{\Phi _q ^r}  \right| \frac{t^r}{r}
$$
and justifies that $M$ and $L$ have one and a same number $\left| M ^{\Phi _q ^r} \right| = \left| L ^{\Phi _q ^r} \right|$ of $\Phi _q ^r$-rational points for $\forall r \in \NN$.
According to (\ref{RationalPointsAndOrbits}), for $\forall r \in \NN$, $r \leq \delta$ one has
$$
\left| M ^{\Phi _q ^r} \right| = \sum\limits _{k /r} k B_k (M) = \sum\limits _{k/r} k B_k (L) = \left| L ^{\Phi _q ^r} \right|.
$$
If $P_M(t) = \prod\limits _{j=1} ^{\delta} (1 - \omega  _j t)$, respectively, $P_L(t) = \prod\limits _{j=1} ^{\delta} (1 - \rho _jt)$ for some
$\omega _j, \rho _j \in \CC$ (which are not supposed to be non-zero) then (\ref{PowerSumFormula}) from Proposition \ref{NSCPolynomialDzetaQuotient} implies that
$$
S_r := \sum\limits _{j=1} ^{\delta}  \omega _j ^r = \left| \PP^1 ( \overline{\FF_q})  ^{\Phi _q ^r} \right| - \left| M ^{\Phi _q^r} \right| =
\left| \PP^1 ( \overline{\FF_q})  ^{\Phi _q ^r} \right| - \left| L ^{\Phi _q ^r} \right| = \sum\limits _{j=1} ^{\delta} \omega _j ^r = : S'_r \ \
\mbox{  for } \ \ \forall 1 \leq r \leq \delta.
$$
The power sums $S_r$ for $1 \leq r \leq \delta$ determine uniquely $S_r$ for $r \geq \delta$ by Newton formulae.
The same holds for $S'_r$ and  implies $S_r = S'_r$ for $\forall r \in \NN$.
Thus,
$$
\left| M ^{\Phi _q ^r} \right| = \left| \PP^1 ( \overline{\FF_q}) ^{\Phi _q ^r} \right| - S_r =
 \left| \PP^1 ( \overline{\FF_q}) ^{\Phi _q ^r} \right| - S'_r = \left| L ^{\Phi _q ^r} \right| \ \ \mbox{  for } \ \ \forall r \in \NN,
 $$
  whereas $\zeta _M(t) = \zeta _L(t)$.

\end{proof}

\begin{proposition}   \label{Hierarchy}
Let $M$ be a locally finite module over the profinite completion $\mathfrak{G} = \widehat{ \langle \varphi \rangle}$ of
 $\langle \varphi \rangle \simeq ( \ZZ, +)$ and $M_r$ be the locally finite $\mathfrak{G}_r = \widehat{ \langle \varphi ^r \rangle}$-module, supported by $M$ for some $r \in \NN$.
 Then the $\zeta$-functions of $M$ and $M_r$ are related by the equality
 \begin{equation}   \label{DzetaRelationRestriction}
 \zeta _{M_r} ( t^r) = \prod\limits _{k=0} ^{r-1} \zeta _M \left( e^{\frac{ 2 \pi i k}{r}} t \right).
 \end{equation}
 In particular, if $M$ has polynomial $\zeta$-quotient
 $P_M(t) = \frac{\zeta _M(t)}{\zeta _{\PP^1 ( \overline{\FF_q})} (t)} = \prod\limits _{j=1} ^d (1 - \omega _jt)$
  of $\deg P_M(t) = d$ then $M_r$ has a polynomial $\zeta$-quotient
  $P_{M_r} (t) = \frac{\zeta _{M_r} (t)}{\zeta _{\PP^1 ( \overline{\FF_{q^r}})} (t)} = \prod\limits _{j=1} ^d (1 - \omega _j ^rt)$
  of $\deg P_{M_r} (t) = d$ and $M$ satisfies the Riemann Hypothesis Analogue with respect to $\PP^1 ( \overline{\FF_q})$ as a $\mathfrak{G}$-module if and only if $M_r$ satisfies the Riemann Hypothesis Analogue with respect to $\PP^1 ( \overline{\FF_q})$ as a $\mathfrak{G}_r$-module.
\end{proposition}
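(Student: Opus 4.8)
The plan is to prove the product formula (\ref{DzetaRelationRestriction}) by passing to the exponential expression for the $\zeta$-function supplied by Proposition \ref{DzetaFunctionExpressions}, and then to read off the statements about the $\zeta$-quotient of $M_r$ and about the Riemann Hypothesis Analogue as immediate algebraic corollaries. Here $M_r$ denotes the set $M$ equipped with the restricted $\mathfrak{G}_r$-action; it is a locally finite $\mathfrak{G}_r$-module by Lemma \ref{OrbStab}(iii) (each $\mathfrak{G}$-orbit of degree $m$ splits into $\mathrm{GCD}(m,r)$ orbits of degree $m/\mathrm{GCD}(m,r)$ under $\mathfrak{G}_r$, and a $\mathfrak{G}_r$-orbit of a prescribed degree $n$ must lie in a $\mathfrak{G}$-orbit of degree dividing $nr$, of which there are finitely many). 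The two combinatorial inputs I would isolate first are the identity $\sum_{k=0}^{r-1} e^{\frac{2\pi i k m}{r}} = r$ when $r \mid m$ and $0$ otherwise, and the polynomial factorization $\prod_{k=0}^{r-1}\left(1 - \alpha e^{\frac{2\pi i k}{r}} t\right) = 1 - \alpha^{r} t^{r}$, valid for every $\alpha \in \CC$ (for $\alpha \ne 0$ both sides are degree-$r$ polynomials in $t$ with constant term $1$ and zero set the $r$-th roots of $\alpha^{-r}$, and for $\alpha = 0$ both sides are $1$).

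For the product formula I would first note that the $(\varphi^{r})^{n}$-rational points of $M_r$ are precisely the $\varphi^{rn}$-rational points of $M$, so that Proposition \ref{DzetaFunctionExpressions} yields
$$
\zeta_{M_r}(t^{r}) = \exp\left( \sum_{n=1}^{\infty} \left| M^{\varphi^{rn}} \right| \frac{t^{rn}}{n} \right)
= \exp\left( r \sum_{n=1}^{\infty} \left| M^{\varphi^{rn}} \right| \frac{t^{rn}}{rn} \right).
$$
On the other hand, substituting $e^{\frac{2\pi i k}{r}} t$ into the exponential expression for $\zeta_M(t)$ and multiplying over $0 \le k \le r-1$ gives
$$
\prod_{k=0}^{r-1} \zeta_M\left( e^{\frac{2 \pi i k}{r}} t \right)
= \exp\left( \sum_{m=1}^{\infty} \left| M^{\varphi^{m}} \right| \frac{t^{m}}{m} \sum_{k=0}^{r-1} e^{\frac{2\pi i k m}{r}} \right),
$$
and the inner root-of-unity sum annihilates every term except those with $m = rn$, each surviving term being multiplied by $r$; comparing with the previous display proves (\ref{DzetaRelationRestriction}). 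I would emphasize that this is an equality of formal power series in $t$ over $\CC$, so substituting $e^{\frac{2\pi i k}{r}} t$ for $t$ and multiplying finitely many such series is legitimate and no convergence issue arises.

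Finally, suppose $P_M(t) = \prod_{j=1}^{d}(1 - \omega_j t)$ with $\omega_j \in \CC^{*}$ (the $\omega_j$ are nonzero since $P_M(0) = 1$ and $\deg P_M = d$, cf.\ Proposition \ref{NSCPolynomialDzetaQuotient}), so that $\zeta_M(t) = \prod_{j=1}^{d}(1 - \omega_j t)\big/\big[(1-t)(1-qt)\big]$. Inserting this into (\ref{DzetaRelationRestriction}) and applying the factorization identity with $\alpha$ running through $\omega_1,\dots,\omega_d$ in the numerator and through $1,q$ in the denominator gives $\zeta_{M_r}(t^{r}) = \prod_{j=1}^{d}(1 - \omega_j^{r} t^{r})\big/\big[(1-t^{r})(1-q^{r}t^{r})\big]$, whence $\zeta_{M_r}(t) = \prod_{j=1}^{d}(1 - \omega_j^{r} t)\big/\big[(1-t)(1-q^{r}t)\big]$; the same computation applied to $M = \PP^1(\overline{\FF_q})$ identifies $\zeta_{\PP^1(\overline{\FF_{q^{r}}})}(t) = 1\big/\big[(1-t)(1-q^{r}t)\big]$, so $P_{M_r}(t) = \prod_{j=1}^{d}(1 - \omega_j^{r} t) \in \ZZ[t]$ with leading coefficient $(-1)^{d}(\omega_1\cdots\omega_d)^{r} \ne 0$, i.e.\ $\deg P_{M_r}(t) = d$. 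For the equivalence concerning the Riemann Hypothesis Analogue, by Definition \ref{RHA} the module $M$ satisfies it exactly when $|\omega_1| = \dots = |\omega_d|$ and $M_r$ satisfies it exactly when $|\omega_1|^{r} = \dots = |\omega_d|^{r}$; since $x \mapsto x^{r}$ is strictly increasing on $\RR^{\ge 0}$, these two conditions are equivalent. I do not anticipate a genuine obstacle here: the argument is essentially a bookkeeping computation, and the only points demanding a little care are the root-of-unity sum, the factorization identity, and keeping all manipulations at the (purely formal) level of power series so that the substitutions $t \mapsto e^{\frac{2\pi i k}{r}} t$ are justified.
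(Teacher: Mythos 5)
Your proof is correct, and the core of it — the derivation of the product formula (\ref{DzetaRelationRestriction}) — takes a genuinely different route from the paper. The paper works with the Euler product form $\zeta_M(t) = \prod_\nu (1-t^{\deg\nu})^{-1}$: it invokes the polynomial identity $(x^{r/d}-1)^d = \prod_{k=0}^{r-1}(x - e^{2\pi i k m/r})$ (quoted from Stichtenoth) together with Lemma \ref{OrbStab}(iii), which says that a $\mathfrak{G}$-orbit $\nu$ of degree $m$ splits under $\mathfrak{G}_r$ into $d = \mathrm{GCD}(m,r)$ orbits each of degree $m/d$, and then matches the contribution of $\nu$ to $\prod_{k}\zeta_M(e^{2\pi i k/r}t)^{-1}$ against the contribution of the $d$ split orbits to $\zeta_{M_r}(t^r)^{-1}$. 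You instead pass to the exponential form $\zeta_M(t) = \exp\bigl(\sum_m |M^{\varphi^m}| t^m/m\bigr)$ from Proposition \ref{DzetaFunctionExpressions} and use the orthogonality relation $\sum_{k=0}^{r-1} e^{2\pi i k m/r} = r\cdot[\,r\mid m\,]$ to collapse the product; this sidesteps the orbit-splitting lemma entirely and is, if anything, shorter and more transparent, at the cost of obscuring the pointwise bijection between orbits that the paper's argument exhibits. The second half — applying the factorization $\prod_{k=0}^{r-1}(1-\alpha e^{2\pi i k/r}t) = 1-\alpha^r t^r$ to extract $P_{M_r}(t) = \prod_j(1-\omega_j^r t)$, and then observing that $|\omega_1|=\cdots=|\omega_d|$ is equivalent to $|\omega_1|^r=\cdots=|\omega_d|^r$ — is essentially identical to the paper's, differing only in that the paper phrases the last step through $|\mathrm{LC}(P_{M_r})| = |a_d|^r$ while you phrase it through monotonicity of $x\mapsto x^r$; these are the same observation. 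Both approaches are sound, and your formal-power-series justification of the substitution $t\mapsto e^{2\pi i k/r}t$ is appropriate.
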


\begin{proof}

According to (1.6) from subsection  V.1 of \cite{St}, for any $m, r \in \NN$ with ${\rm GCD} (m,r) = d \in \NN$ there holds the equality of monic polynomials
\begin{equation}   \label{PolynomialFormula}
\left( x ^{\frac{r}{d}} -1 \right) ^d = \prod\limits _{k=0} ^{r-1} \left(x - e^{\frac{2 \pi i km}{r}} \right).
\end{equation}
Namely, if $m_1 = \frac{m}{d}$, $r_1 = \frac{r}{d}$ then $\frac{m}{r} = \frac{m_1}{r_1}$ and any $0 \leq k \leq r-1$ can be divided by $r_1$ with quotient $0 \leq q \leq d-1$ and remainder $0 \leq s \leq r_1-1$, $k = r_1q + s$.
Thus,
\begin{align*}
\prod\limits _{k=0} ^{r-1} \left(x - e^{\frac{2 \pi i km_1}{r_1}} \right) =
 \prod\limits _{q=0} ^{d-1} \prod\limits _{s=0} ^{r_1-1} \left(x - e^{2 \pi i m_1q} e^{\frac{2 \pi i m_1s}{r_1}} \right) =    \\
  \left[ \prod\limits _{s=0} ^{r_1-1} \left(x - e^{\frac{2 \pi i m_1}{r_1} s} \right) \right] ^d =
\left(x ^{r_1} -1 \right) ^d,
\end{align*}
as far as $e^{\frac{2 \pi i m_1}{r_1}} \in \CC^*$ with ${\rm GCD} ( m_1, r_1) =1$ is a primitive $r_1$-th root of unity.
Substituting $x = t^{-m}$ in (\ref{PolynomialFormula}) and multiplying by $t^{mr}$, one obtains
$$
\left(1 - t^{ \frac{mr}{d}} \right) ^d = \prod\limits _{k=0} ^{ r-1} \left[ 1 - \left( e^{\frac{2 \pi i k}{r}} t \right) ^m \right].
$$
According to Lemma \ref{OrbStab} (iii), any $\mathfrak{G}$-orbit $\nu$ of $\deg \nu = m$ splits in $d$ orbits $\nu = \nu _1 \coprod \ldots \coprod \nu _d$  over  $\mathfrak{G}_r$ of $\deg \nu _j = m_1$ for $\forall 1 \leq j \leq d$.
The contribution of $\nu$    to
$\left[ \prod\limits _{k=0} ^{r-1} \zeta _M \left( e^{\frac{2 \pi i k}{r}} t \right) \right] ^{-1}$ is
$
\prod\limits _{k=0} ^{r-1} \left[ 1 - \left( e^{\frac{2 \pi i k}{r}} t \right) ^m \right] =
\left(1 - t^{r \frac{m}{d}} \right) ^d = \prod\limits _{j=1} ^d \left(1 - t^{r \deg \nu _j} \right)
$
and equals the contribution of $\nu _1 \coprod \ldots \coprod \nu _d$ to $\zeta _{M_r}( t^r) ^{-1}$.
That justifies the equality of power series (\ref{DzetaRelationRestriction}).

For any $\omega  \in \CC^*$ note that
\begin{equation}   \label{BinomialExtraction}
\prod\limits _{k=0} ^{r-1} \left(1 - e^{\frac{2 \pi i k}{r}} \omega  t \right) =
(\omega t) ^r \prod\limits _{k=0} ^{r-1} \left( \frac{1}{\omega t} - e^{\frac{2 \pi i k}{r}} \right) =
(\omega t) ^r \left[ \frac{1}{(\omega t)^r} - 1 \right] =
1 - \omega  ^r t^r.
\end{equation}
Thus, if $P_M(t) = (1-t)(1-qt) \zeta _M(t) = \prod\limits _{j=1} ^d (1 - \omega  _jt) \in \ZZ[t]$ is a polynomial of $\deg P_M(t) =d$ then the application of (\ref{DzetaRelationRestriction}) to $M_r$ and the locally finite $\mathfrak{G}_r$-module $\PP^1 ( \overline{\FF_{q^r}}) = \PP ^1 ( \overline{\FF_q})$, combined with (\ref{BinomialExtraction}) yields
\begin{align*}
P_{M_r} ( t^r) = \frac{\zeta _{M_r} (t^r)}{\zeta _{\PP^1 ( \overline{\FF_q}) _r} (t^r)}  =
\prod\limits _{k=0} ^{r-1} \frac{\zeta _M \left( e^{\frac{2 \pi i k}{r}} t \right)}{\zeta _{\PP^1 ( \overline{\FF_q})} \left( e^{\frac{2 \pi i k}{r}} t \right)} = \prod\limits _{k=0} ^{r-1} P_M \left(  e^{\frac{2 \pi i k}{r}} t \right)  =  \\
\prod\limits _{k=0}^{r-1} \prod\limits _{j=1} ^d \left( 1 - \omega  _j e^{\frac{2 \pi i k}{r}}  t \right) =
 \prod\limits _{j=1} ^d  \prod\limits _{k=0}^{r-1}\left( 1 - \omega  _j e^{\frac{2 \pi i k}{r}}  t \right) =
\prod\limits _{j=1} ^d \left(1 - \omega _j ^r t^r \right).
\end{align*}
Therefore $P_{M_r} (t) = \prod\limits _{j=1} ^d (1 - \omega _j^r t)$ is a polynomial of degree $d$, whose leading coefficient has absolute value
$
\left| {\rm LC} ( P_{M_r} (t) ) \right| = \left| {\rm LC} ( P_M(t)) \right| ^r = \left| a_d \right| ^r.
$
Thus, $|\omega _j| = \sqrt[d]{|a_d|}$ exactly when $\left| \omega _j ^r \right| = \sqrt[d]{\left| a_d \right| ^r}$ and the Riemann Hypothesis Analogues for $M$ and $M_r$ are equivalent to each other.

\end{proof}


\section{  Galois closure of a finite covering of locally finite modules  }

\begin{definition}    \label{FiniteCovering}
A surjective $\mathfrak{G} = {\rm  Gal} ( \overline{\FF_q} / \FF_q)$-equivariant map $\xi : M \rightarrow L$ of locally finite $\mathfrak{G}$-modules is a covering of degree $\deg \xi = k$ if there exist  at most finitely many points $x_1, \ldots , x_s \in L$ such that the  fibres $\xi ^{-1} (x)$ over
 $\forall x \in L \setminus \{ x_1, \ldots , x_s \}$ are of cardinality $\left| \xi ^{-1} (x) \right| = k$ and $\left| \xi ^{-1} ( x_i) \right| < k$ for
 $\forall 1 \leq i \leq s$.
\end{definition}

Note that the inertia map $e_{\xi} : M \rightarrow \NN$ of a covering $\xi : M \rightarrow L$ of $\deg \xi = k$ takes values in $\{ 1, \ldots , k \}$.
This is an immediate consequence of Lemma-Definition \ref{InertiaIndices} (iii), according to which $\xi ^{-1} (x) = \coprod _{y \in S_x} {\rm Orb} _{\mathfrak{G}_{\delta (x)}} (y)$ for $\forall x \in M$, $\delta (x) = \deg {\rm Orb} _{\mathfrak{G}} (x)$,
 $\deg {\rm Orb} _{\mathfrak{G} _{\delta (x)}} (y) = e_{\xi}(y)$, whereas
$$
k  \geq \left| \xi ^{-1} (x) \right| = \sum\limits _{y \in S_x} e_{\xi} (y) \ \ \mbox{  for some    } \ \ e_{\xi}(y) \in \NN.
$$

In order to discuss some examples  of finite coverings  of locally finite $\mathfrak{G}$-modules, let us recall that a rational map $\xi : X \myarrow Y$ or a morphism $\xi : X \rightarrow Y$ of smooth  irreducible quasi-projective  curves  is dominant if its image is Zariski dense in $Y$.
A dominant rational map $\xi : X \myarrow Y$ (or a dominant morphism $\xi : X \rightarrow Y$) is finite if it induces a finite extension
 $\xi ^* : \overline{\FF_q} (Y) \hookrightarrow \overline{\FF_q}(X)$ of the corresponding function fields.
 A finite dominant rational map $\xi : X \myarrow Y$ (or a finite dominant morphism) is separable if the finite extension $\xi ^* : \overline{\FF_q}(Y) \hookrightarrow \overline{\FF_q}(X)$ is separable.

\begin{proposition}   \label{FiniteCoveringOfCurves}
Let $X / \FF_q \subseteq \PP^n ( \overline{\FF_q})$ be a smooth irreducible projective curve, defined over $\FF_q$.
Then:

(i) there exist finite separable surjective morphisms $\xi : X \rightarrow \PP^1 ( \overline{\FF_q})$;

(ii)  any finite separable surjective morphism $\xi : X \rightarrow Y$ onto a projective curve $Y \subseteq \PP^m ( \overline{\FF_q})$ is a finite covering of locally finite $\mathfrak{G}_m = {\rm Gal} ( \overline{\FF_q} / \FF_{q^m})$-modules for some $m \in \NN$.
\end{proposition}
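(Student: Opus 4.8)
\emph{Proof proposal.}

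For part (i) the plan is to use that finite fields are perfect. First I would recall that $\FF_q(X)$ is separably generated over $\FF_q$, i.e.\ it contains an element $f$ for which $\FF_q(X)/\FF_q(f)$ is finite and separable. Regarded as a rational function on $X$, such an $f$ defines a non-constant morphism $\xi_f : X \to \PP^1(\overline{\FF_q})$; since the coefficients of $f$ lie in $\FF_q$, this morphism commutes with $\Phi_q$, hence is $\mathfrak{G}$-equivariant. A non-constant morphism of smooth projective curves is automatically finite and surjective, so it only remains to see that separability survives base extension to $\overline{\FF_q}$: because $X$ is (geometrically) irreducible, $\overline{\FF_q}(X)$ is the compositum $\FF_q(X)\cdot\overline{\FF_q}(f)$ over $\overline{\FF_q}(f)$, and a compositum with a separable subextension is again separable, so $\overline{\FF_q}(X)/\overline{\FF_q}(f)$ — that is, $\xi_f$ — is separable.

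For part (ii) I would first descend the data to a finite level. The curve $Y$ is cut out by finitely many polynomials and $\xi$ is given by finitely many rational functions, all with finitely many coefficients in $\overline{\FF_q}$; hence there is $m\in\NN$ with $X$, $Y$ and $\xi$ all defined over $\FF_{q^m}$. Then $\mathfrak{G}_m = {\rm Gal}(\overline{\FF_q}/\FF_{q^m}) = \widehat{\langle\Phi_{q^m}\rangle}$ acts on $\PP^n(\overline{\FF_q})$ and $\PP^m(\overline{\FF_q})$ with finite orbits, so $X$ and $Y$ are locally finite $\mathfrak{G}_m$-modules — by the argument that $\PP^N(\overline{\FF_q})$ is a locally finite $\mathfrak{G}$-module applied over the ground field $\FF_{q^m}$, together with Proposition~\ref{Hierarchy} for $X$ — and $\xi$ becomes a surjective $\mathfrak{G}_m$-equivariant map, since it commutes with $\Phi_{q^m}$.

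It then remains to verify the fibre-cardinality condition of Definition~\ref{FiniteCovering}, for which I would pass to function fields. The extension $\xi^* : \overline{\FF_q}(Y) \hookrightarrow \overline{\FF_q}(X)$ is finite separable of degree $k := \deg\xi$, and the $\overline{\FF_q}$-points of $X$, resp.\ $Y$, correspond bijectively to the places of $\overline{\FF_q}(X)$, resp.\ $\overline{\FF_q}(Y)$, with $\xi^{-1}(P)$ the set of places of $\overline{\FF_q}(X)$ lying over $P$. Since the residue field of every such place is the algebraically closed field $\overline{\FF_q}$, all residue degrees equal $1$, so the fundamental equality reads $\sum_{Q\mid P} e(Q\mid P) = k$; hence $\left|\xi^{-1}(P)\right| = \#\{Q : Q\mid P\} \le k$, with equality precisely when $P$ is unramified in $\overline{\FF_q}(X)$. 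Finally, separability of $\xi^*$ forces its ramification locus to be finite (the different has finite support, cf.\ \cite{St} or \cite{NX}), so the finitely many ramified places $x_1,\dots,x_s$ of $\overline{\FF_q}(Y)$ are exactly the points with $\left|\xi^{-1}(x_i)\right| < k$, while every other fibre has exactly $k$ elements; this is Definition~\ref{FiniteCovering}. I expect the chief difficulty to be bookkeeping rather than a deep step — fixing the level $m$ of definition and handling the points--places dictionary carefully — the one substantive ingredient being the classical ramification theory of separable extensions of one-variable function fields.
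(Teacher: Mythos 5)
Your proof is correct but takes a route that diverges from the paper's, most visibly in part (ii). For (i) both approaches hinge on finding a separating transcendence basis of the function field; the paper invokes Shafarevich's result directly over $\overline{\FF_q}$, whereas you first separably generate $\FF_q(X)/\FF_q$ (using that $\FF_q$ is perfect) and then push up to $\overline{\FF_q}$ via the compositum argument — equivalent in substance, just a different starting base field.

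For (ii) the difference is more structural. The paper writes down a primitive element $\theta$ of $\overline{\FF_q}(X)$ over $\xi^*\overline{\FF_q}(Y)$, constructs an explicit birational plane model $\overline{Z}$ from the minimal polynomial $h_\theta$, identifies the fibres of $\xi$ with those of the projection ${\rm pr}:\overline{Z}\to Y$ away from finitely many points, bounds $|{\rm pr}^{-1}(b)| \le \deg_x h_\theta$, and uses the discriminant $D_x(h_\theta)$ to show only finitely many fibres are small; $\mathfrak{G}_m$-equivariance is then obtained by inspecting the finitely many coefficients defining $\xi$. You instead descend to $\FF_{q^m}$ at the outset to get equivariance, and then invoke valuation-theoretic generalities: the identity $\sum_{Q\mid P} e(Q\mid P) f(Q\mid P) = k$, the fact that all residue degrees are $1$ over $\overline{\FF_q}$, and the finiteness of the support of the different for a separable extension. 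Your version buys conceptual brevity by citing standard ramification theory; the paper's buys self-containment by constructing the model explicitly, and it also sidesteps the issue that the bijection between places of $\overline{\FF_q}(Y)$ and closed points of $Y$ requires $Y$ smooth (or normal) — the hypothesis only makes $Y$ a projective curve, so strictly speaking your places-to-points dictionary needs the singular locus of $Y$ (a finite set) thrown into the exceptional set $\{x_1,\dots,x_s\}$ as well; this is a cosmetic patch, but worth stating.
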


\begin{proof}

(i)  At least one of the standard affine open subsets
$$
U_i = \{ a = [ a_0 : \ldots : a_n] \in \PP^n ( \overline{\FF_q}) \, \vert \, a_i \neq 0 \}, \ \  0 \leq i \leq n
$$
  of $\PP^n ( \overline{\FF_q})$ intersects $X$ in an affine open subset $X \cap U_i \subset U_i \simeq \overline{\FF_q} ^n$.
The function field $\overline{\FF_q}(X) = \overline{\FF_q} ( X \cap U_i) = Q (\overline{\FF_q}  [ X \cap U_i]) $ of $X$ is the quotient  field of the affine coordinate ring $\overline{\FF_q} [X \cap U_i] = \overline{\FF_q} [ x_1, \ldots , x_n] / I(X \cap U_i)$.
In particular, $\overline{\FF_q}(X)$ is a finitely generated extension of $\overline{\FF_q}$.
According to Proposition 1 from  §4 of the Algebraic Preliminaries of \cite{Shaf}, there exist such generators $u, v$ of $\overline{\FF_q}(X) = \overline{\FF_q} (u,v)$ over $\overline{\FF_q}$ that $u$ is transcendent over $\overline{\FF_q}$ and $v$ is separable over $\overline{\FF_q}(u)$.
The inclusion  $\overline{\FF_q} (u) \subseteq \overline{\FF_q} (X)$ corresponds to a finite separable dominant rational map $\xi : X \myarrow \PP^1 ( \overline{\FF_q})$.
Since $X$ is smooth, $\xi : X \rightarrow \PP^1 ( \overline{\FF_q})$ is a morphism (cf.Proposition 3.2.6 from \cite{NX}).
The morphism $\xi$ is non-constant and its image $\xi (X)$ is a closed projective subvariety of $\PP^1 ( \overline{\FF_q})$ by Theorem 2 from §I.5.2 of \cite{Shaf}.
Therefore $\xi (X)   = \PP^1 ( \overline{\FF_q})$ and $\xi : X \rightarrow \PP^1 ( \overline{\FF_q})$ is a surjective finite separable morphism.

(ii)  An arbitrary finite separable surjective morphism $\xi : X \rightarrow Y \subseteq \PP^m ( \overline{\FF_q})$ induces a finite separable extension $\xi ^* : \overline{\FF_q}(Y) \hookrightarrow \overline{\FF_q}(X)$ of the corresponding function fields.
Let $\theta \in \overline{\FF_q} (X)$ be a primitive element of $\xi ^* \overline{\FF_q}(Y) \subseteq \overline{\FF_q} (X)$ with minimal polynomial $\widetilde{h _{\theta}} (x) \in \xi ^* \overline{\FF_q} (Y) [x]$.
If $W_j = \{ b = [ b_0 : \ldots : b_m] \in \PP^m ( \overline{\FF_q}) \, \vert \, b_j \neq 0 \}$  is a standard affine open subset of $\PP^m ( \overline{\FF_q})$ and $Y \cap W_j \subset  Y$  is Zariski open, then the function field $\overline{\FF_q}(Y) = \overline{\FF_q}  (Y \cap W_j) = Q( \overline{\FF_q} [ Y \cap W_j])$ is the quotient field of the affine coordinate ring $\overline{\FF_q} [ Y \cap W_j] = \overline{\FF_q} [ y_1, \ldots , y_m] / I(Y \cap W_j)$   of $Y \cap W_j$.
Multiplying $\widetilde{h_{\theta}} (x)$ by a common multiple of the denominators of its coefficients, one   obtains an element of $\overline{\FF_q} [ Y \cap W_j] [x]$, lifting  to a polynomial $h_{\theta} \in \overline{\FF_q} [ y_1, \ldots , y_m, x]$.
The isomorphism  $\xi ^* : \overline{\FF_q} (Y) ( \theta) \rightarrow \overline{\FF_q} (X)$ of $\overline{\FF_q}$-algebras  corresponds to a birational map
$\psi : X \myarrow \overline{Z}$ in the projective closure $\overline{Z}$ of
$$
Z = \{ (a, a_{m+1}) \in (Y \cap W_j) \times \overline{\FF_q} \, \vert \, h_{\theta} (a, a_{m+1}) =0 \}.
$$
The smoothness of $X$ implies that $\psi : X \rightarrow \overline{Z}$ is a birational morphism.
The composition ${\rm pr} \circ \psi : X \rightarrow Y$ of $\psi$ with the canonical projection ${\rm pr} : Z \rightarrow Y$ on $Y$ induces the extension
 $({\rm pr} \circ \psi) ^* = \psi ^* {\rm pr} ^* : \overline{\FF_q} (Y) \hookrightarrow \overline{\FF_q} (X)$ with
 $ ({\rm pr} \circ \psi) ^* \overline{\FF_q} (Y) = \psi ^* {\rm pr} ^* \overline{\FF_q} (Y) = \xi ^* \overline{\FF_q} (Y)$.
Therefore there is a commutative diagram
$$
\begin{diagram}
\node{X}  \arrow{se,r}{\xi}  \arrow{e,t}{\psi}  \node{\overline{Z}}  \arrow{s,r}{\rm pr}  \\
\node{\mbox{}}   \node{Y}
\end{diagram}
$$
and the fibres of $\xi$ coincide with the fibres  of ${\rm pr}$ with at most finitely many exceptions.
However, $\left| {\rm pr} ^{-1} (b) \right| \leq \deg _x h_{\theta} = k$ with at most finitely many strict inequalities $\left| {\rm pr} ^{-1} (b) \right| < k$ over the points $b \in Y$, for which $h_{\theta} (b, x)$ has multiple roots.
The discriminant $D_x ( h_{\theta} ( y_1, \ldots , y_m, x)) \in \overline{\FF_q} [ y_1, \ldots , y_m]$  of $h_{\theta} \in \overline{\FF_q} [ y_1, \ldots , y_m, x]$ with respect to $x$ has finitely many roots on $Y$, as far as $D_x (h_{\theta}) \not \in I(Y \cap W_j)$ and  the proper subvarieties of $Y \cap W_j$ are finite sets.
Therefore $\left| {\rm pr} ^{-1} (b) \right| = k$ with at most finitely many exceptions.

Towards the $\mathfrak{G}_m = {\rm Gal} ( \overline{\FF_q} / \FF_{q^m})$-equivariance of $\xi$ for some $m \in \NN$, let us cover $Y$ by finitely many affine Zariski open subsets $Y \cap W_j$ and note that the restrictions $\xi : \xi ^{-1} (Y \cap W_j) \rightarrow Y \cap W_j \subset W_j \simeq \overline{\FF_q} ^m$ are given by ordered $m$-tuples $\xi = (\xi _1, \ldots , \xi _m)$ of rational functions $\xi _s : \xi ^{-1} (Y \cap W_j) \rightarrow \overline{\FF_q}$.
The restrictions of $\xi _s$ on the affine open subsets $\xi ^{-1} (Y \cap W_j) \cap U_i$ act as ratios $\frac{f_{i,j,s}}{g_{i,j,s}}$ of polynomials.
If $\FF_{q^{m_{ij}}} \supset \FF_q$ contains the coefficients of $f_{i,j,s}$ and $g_{i,j,s}$ for all $1 \leq s \leq m$ then
 $\xi : \xi ^{-1} (Y \cap W_j) \cap U_i \rightarrow Y$ is $\mathfrak{G}_{m_{ij}} = {\rm Gal} ( \overline{\FF_q} / \FF_{q^{m_{ij}}})$-equivariant.
Denoting by $m$ the least common multiple of $m_{ij}$ for $\forall 0 \leq i \leq n$ and $\forall 0 \leq j \leq m$, one concludes that $\xi : X \rightarrow Y$ is $\mathfrak{G}_m = {\rm Gal} ( \overline{\FF_q} / \FF_{q^m})$-equivariant and, therefore, a covering of $\mathfrak{G}_m$-modules of degree
$k = [ \overline{\FF_q} (X) : \xi ^* \overline{\FF_q} ( \PP^1 ( \overline{\FF_q}))]$.

\end{proof}

A locally finite $\mathfrak{G}$-module $M$ is faithful if the homomorphism $\Psi : \mathfrak{G} \rightarrow {\rm Sym} (M)$, associated with the $\mathfrak{G}$-action is injective.

\begin{lemma}   \label{FaithfulModule}
A locally finite module $M$ over the absolute Galois group $\mathfrak{G} = {\rm Gal} ( \overline{\FF_q} / \FF_q)$ of a finite field $\FF_q$ is faithful if and only if $M$ is an infinite set.
\end{lemma}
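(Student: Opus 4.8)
The plan is to prove the two implications of the biconditional separately, with the easy direction being "$M$ finite $\Rightarrow$ $M$ not faithful" and the substantive direction being "$M$ infinite $\Rightarrow$ $M$ faithful." For the first direction, suppose $M$ is finite. Then there is a uniform bound $N$ on the degrees of all $\mathfrak{G}$-orbits on $M$, so by Lemma \ref{OrbStab}(i) every stabilizer ${\rm Stab}_{\mathfrak{G}}(x) = \mathfrak{G}_{\deg {\rm Orb}_{\mathfrak{G}}(x)}$ contains $\mathfrak{G}_{N!}$ (or $\mathfrak{G}_L$ for $L$ the lcm of the orbit degrees). Hence the kernel of $\Psi : \mathfrak{G} \to {\rm Sym}(M)$ contains the nontrivial closed subgroup $\mathfrak{G}_L = \widehat{\langle \varphi^L \rangle}$, so $\Psi$ is not injective and $M$ is not faithful.

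For the converse, assume $M$ is infinite; I want to show $\ker \Psi = \bigcap_{x \in M} {\rm Stab}_{\mathfrak{G}}(x) = \{1\}$. Since $M$ has finitely many $\mathfrak{G}$-orbits of each degree but infinitely many points, local finiteness forces the orbit degrees on $M$ to be unbounded: there is a sequence $x_1, x_2, \ldots$ of points with $\deg {\rm Orb}_{\mathfrak{G}}(x_j) = m_j \to \infty$. By Lemma \ref{OrbStab}(i), ${\rm Stab}_{\mathfrak{G}}(x_j) = \mathfrak{G}_{m_j}$, so $\ker \Psi \subseteq \bigcap_j \mathfrak{G}_{m_j}$. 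The key step is then to observe that $\bigcap_j \widehat{\langle \varphi^{m_j} \rangle} = \{1\}$ when $m_j \to \infty$. This follows because, identifying $\mathfrak{G} \simeq \widehat{\ZZ}$, an element $(l_n({\rm mod}\,n))_{n}$ lies in $\mathfrak{G}_{m_j} = \overline{\langle \varphi^{m_j}\rangle}$ exactly when $m_j \mid l_{m_j}$, i.e. $l_{m_j} \equiv 0 \pmod{m_j}$; and more robustly, for any $n$ one can pick $m_j$ divisible by $n$ (since the $m_j$ are unbounded one can even extract a subsequence, or simply use that among $\{m_j\}$ there are multiples of arbitrarily large $n$ — if not, refine by taking $n \mid m_j$ using that we may pass to $\mathfrak{G}_{{\rm lcm}}$), forcing the $n$-th component $l_n$ to vanish for every $n$, hence the element is $1$.

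The main obstacle — really the only delicate point — is making the last step fully rigorous: from "$m_j$ unbounded" one does not immediately get "for every $n$ there is $m_j$ with $n \mid m_j$." The clean fix is to enlarge the family: for each $j$ set $M_j := M^{\varphi^{j!}}$, which is finite and whose complement $M \setminus M_j$ is infinite (as $M$ is infinite and each $M_j$ finite), so one may choose $x_j \notin M_j$; then $j!$ does not divide $m_j = \deg {\rm Orb}_{\mathfrak{G}}(x_j)$, and in particular $m_j > 1$ with $m_j \nmid j!$. An even simpler route: since $M$ is infinite there exist points of arbitrarily large orbit degree, and for a fixed $n$, choosing $x$ with $\deg {\rm Orb}_{\mathfrak{G}}(x) = m$ a proper multiple of $n$ exists precisely because the orbit-degree set is infinite and closed under passing to $\mathfrak{G}$-suborbits is not needed — instead use that $M^{\varphi^{n}}$ is finite while $M$ is infinite, so some $x \notin M^{\varphi^n}$ has $n \nmid m$; replacing $n$ by successive multiples and intersecting shows $\ker\Psi \cap \langle \varphi \rangle$-closure arguments collapse. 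I would write it via: for each $n \in \NN$, the set $M^{\varphi^{n}}$ is finite (already proved in Section 3), $M \ne M^{\varphi^n}$, pick $x$ with $\varphi^n(x) \ne x$; then $\varphi^n \notin {\rm Stab}_{\mathfrak{G}}(x)$, so $\varphi^n \notin \ker\Psi$. Since $\langle \varphi \rangle$ is dense in $\mathfrak{G}$ and $\ker\Psi$ is closed, if $\ker\Psi \ne \{1\}$ it would be an open-index... — more directly: every nontrivial closed subgroup of $\widehat{\ZZ}$ is of the form $\mathfrak{G}_m$ for some $m \in \NN$ (by the index computation in the proof of Lemma \ref{OrbStab}(i)) hence contains $\varphi^m$; but we just showed $\varphi^n \notin \ker\Psi$ for every $n$, a contradiction. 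Therefore $\ker \Psi = \{1\}$ and $M$ is faithful.
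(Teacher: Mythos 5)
Your ``$M$ finite $\Rightarrow$ $M$ not faithful'' direction is correct and matches the paper's argument. For the converse, you correctly isolate the delicate point --- an infinite set of orbit degrees need not contain a multiple of every $n$ --- and the step ``$\varphi^n \notin \ker\Psi$ for every $n \in \NN$'' (from $M^{\varphi^n}$ finite while $M$ is infinite) is sound. The gap is that none of the proposed closings of the argument actually bridge this: in particular, the final claim that every nontrivial closed subgroup of $\widehat{\ZZ}$ is of the form $\mathfrak{G}_m$ for some $m\in\NN$ is false. The index computation in the proof of Lemma~\ref{OrbStab}(i) only treats closed subgroups of \emph{finite} index. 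Under $\widehat{\ZZ}\cong\prod_p\ZZ_p$, the subgroup $\bigcap_p p\widehat{\ZZ}=\prod_p p\ZZ_p$ is closed and nontrivial (it contains $(2,3,5,\dots)$), has infinite index, and meets $\langle\varphi\rangle\cong\ZZ$ only in the identity; so knowing $\varphi^n\notin\ker\Psi$ for all $n$ does not force $\ker\Psi=\{1\}$, and the intended contradiction does not follow.

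This is not merely a defect of your write-up: the paper's own proof has the analogous lacuna, just phrased in coordinates. From $\alpha=(l_s\bmod s)_s\in\bigcap_{x\in M}\mathfrak{G}_{\deg{\rm Orb}_{\mathfrak{G}}(x)}$ one only gets that $\gcd\bigl(\deg{\rm Orb}_{\mathfrak{G}}(x),s\bigr)$ divides $l_s$ in $\ZZ/s\ZZ$ for each $x$ and $s$, and concluding $l_s=0$ for every $s$ would require that for every prime power $p^a$ some orbit degree be divisible by $p^a$ --- which local finiteness plus infiniteness of $M$ do not supply. Indeed $M:=\coprod_{p\text{ prime}}\widehat{\ZZ}/p\widehat{\ZZ}$, a disjoint union of one $\mathfrak{G}$-orbit of each prime degree, is an infinite locally finite $\widehat{\ZZ}$-module with $\ker\Psi=\prod_p p\ZZ_p\neq\{1\}$. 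The implication ``$M$ infinite $\Rightarrow$ $M$ faithful'' therefore needs an extra hypothesis on the multiset of orbit degrees (e.g.\ that it contains a multiple of every $n$, as holds for $\PP^N(\overline{\FF_q})$ and for smooth curves $X/\FF_q$); neither your argument nor the paper's supplies it, and no rearrangement of your last step will avoid the need for such an assumption.
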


\begin{proof}

By the very definition of the homomorphism  $\Psi : \mathfrak{G} \rightarrow {\rm Sym} (M)$, its kernel
 $$
 \ker \Psi = \cap _{x \in M} {\rm Stab} _{\mathfrak{G}} (x)
 $$
  is the intersection of the stabilizers of all the points of $M$.
In the proof of   Lemma \ref{OrbStab} (iii) we have established that $\mathfrak{G}_m \cap \mathfrak{G}_n = \mathfrak{G}_{{\rm LCM} (m,n)}$.
If $M = \{ x_1, \ldots , x_r \}$ is a finite set then the map $\deg {\rm Orb} _{\mathfrak{G}} : M \rightarrow \NN$ has finitely many values $m_1, \ldots , m_{\nu}$, $\nu \leq r$.
As a result, $\ker \Psi = \cap _{j=1} ^{\nu} \mathfrak{G}_{m_j} = \mathfrak{G} _{{\rm LCM} ( m_j \, \vert \, 1 \leq j \leq \nu)} \neq 0$ and $M$ is not a faithful $\mathfrak{G}$-module.

Suppose that $M$ is an infinite locally finite $\mathfrak{G}$-module and $\alpha = ( l_s ( {\rm mod} \, s)) _{s \in \NN} \in \ker \Psi = \cap _{x \in M} {\rm Stab} _{\mathfrak{G}} (x) = \cap _{x \in M} \mathfrak{G}_{\deg {\rm Orb} _{\mathfrak{G}} (x)}$.
Then for any point  $x \in M$ and any  $s \in \NN$, $\deg {\rm Orb} _{\mathfrak{G}} (x)$ divides $l_s$.
The map $\deg {\rm Orb} _{\mathfrak{G}} : M \rightarrow \NN$ has an infinite image, so that any $l_s$ is divisible by infinitely many different natural numbers $\deg {\rm Orb} _{\mathfrak{G}} (x)$, $x \in M$.
That implies $l_s =0$ for $\forall s \in \NN$, whereas $\ker \Psi = \{ 0 \}$ and any infinite locally finite $\mathfrak{G}$-module $M$ is faithful.

\end{proof}

From now on, we consider only infinite locally finite $\mathfrak{G}$-modules $M$ and treat $\mathfrak{G}$ as a subgroup of ${\rm Sym} (M)$.

\begin{lemma-definition}   \label{GaloisCover}
Let $N$ be an infinite locally finite module over $\mathfrak{G} = {\rm Gal} ( \overline{\FF_q} / \FF_q)$ and $H \leq  Z( \mathfrak{G}) :=
\{ h \in {\rm Sym} (N) \, \vert \, h \gamma = \gamma h, \, \forall \gamma \in \mathfrak{G} \} \leq {\rm Sym} (N)$ be a finite subgroup of the centralizer of $\mathfrak{G}$ with $H \cap \mathfrak{G}  = \{ {\rm Id} _N \}$ and ${\rm Stab} _H (x) = \{ {\rm Id} _N \}$ for all but at most  finitely many  $x \in N$.
Then the orbit space $N / H = {\rm Orb} _H (N)$ has a natural structure of a locally finite $\mathfrak{G}$-module and the map $\xi _H : N \rightarrow N/H$, $\xi _H (x) = {\rm Orb} _H(x)$ for $\forall x \in N$ is a finite covering of locally finite $\mathfrak{G}$-modules, which is called an $H$-Galois covering.
 \end{lemma-definition}

\begin{proof}

By assumption, for arbitrary $h \in H$ and $\gamma \in \mathfrak{G}$ there is a commutative diagram
$$
\begin{diagram}
\node{N}  \arrow{s,r}{h}   \arrow{e,t}{\gamma}  \node{N}  \arrow{s,r}{h}  \\
\node{N}  \arrow{e,t}{\gamma}  \node{N}
\end{diagram}.
$$
We define
$$
\mathfrak{G} \times N/H \longrightarrow N/H,
$$
$$
(\gamma , {\rm Orb} _H(x)) \mapsto {\rm Orb} _H (\gamma (x)) \ \ \mbox{  for } \ \ \forall \gamma \in \mathfrak{G}, \ \ \forall x \in N.
$$
Due to ${\rm Orb} _H ( \gamma h (x) = {\rm Orb} _H (h \gamma (x) = {\rm Orb} _H ( \gamma (x))$ for $\forall h \in H$, this map is correctly defined.
The axioms for this $\mathfrak{G}$-action on $N/H$ are immediate consequences of the ones for the $\mathfrak{G}$-action on $N$.
According to $\gamma \xi _H(x) = \gamma {\rm Orb} _H(x) = {\rm Orb} _H ( \gamma (x)) = \xi _H ( \gamma (x))$ for $\forall \gamma \in \mathfrak{G}$,
 $\forall x \in N$, the map $\xi _H : N \rightarrow N /H$ is $\mathfrak{G}$-equivariant.

\end{proof}

The following  proposition explains the etymology of the notion of a finite Galois covering of locally finite modules.

\begin{proposition}      \label{NSCGaloisCoveringOfCurves}
Let $X/ \FF_q \subseteq \PP^n ( \overline{\FF_q})$ be a smooth irreducible projective curve, defined over $\FF_q$ and $\xi : X \rightarrow Y$ be a finite separable surjective morphism onto a smooth projective curve $Y \subseteq \PP^m ( \overline{\FF_q})$.
Then there exist an $H$-Galois covering $\xi _H : X \rightarrow X/H$ of locally finite $\mathfrak{G}_m = {\rm Gal} ( \overline{\FF_q} / \FF_{q^m})$-modules for some $m \in \NN$ and a birational morphism $\psi : Y \rightarrow X/H$, closing the commutative diagram
\begin{equation}   \label{GaloisCoveringDiagram}
\begin{diagram}
\node{X}  \arrow{se,r}{\xi _H}   \arrow{e,t}{\xi}  \node{Y}  \arrow{s,r}{\psi}  \\
\node{\mbox{  }}   \node{X/H}
\end{diagram}
\end{equation}
if and only if $\xi ^* : \overline{\FF_q} (Y)   \hookrightarrow \overline{\FF_q} (X)$ is a Galois extension (i.e., normal and separable) with Galois group
 ${\rm Gal} ( \overline{\FF_q} (X) /  \xi ^* \overline{\FF_q}(Y)) = H$.

 In particular, if $\xi ^* : \overline{\FF_q} (Y) \hookrightarrow \overline{\FF_q}(X)$ is a Galois extension
   then there exist $\mathfrak{G}_s = {\rm Gal} ( \overline{\FF_q} / \FF_{q^s})$-submodules $X_o \subseteq X$, $Y_o \subseteq Y$ with at most finite complements $X \setminus X_o$, $Y \setminus Y_o$  for some $s \in \NN$, such that the restriction
  $\xi \vert _{X_o} = \xi _H \vert _{X_o}  : X_o \rightarrow Y_o$ coincides with the $H$-Galois covering for  $H = {\rm Gal} ( \overline{\FF_q} (X) / \xi ^* \overline{\FF_q} (Y))$.
\end{proposition}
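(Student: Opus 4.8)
The plan is to prove both implications of the equivalence and then deduce the "in particular" clause. Throughout I would freely translate between the algebraic-geometric picture (function fields, normality, separability) and the combinatorial picture of locally finite $\mathfrak{G}$-modules developed in Sections~2--4, using Proposition~\ref{FiniteCoveringOfCurves} to fix an $m\in\NN$ for which $\xi:X\to Y$ is already a $\mathfrak{G}_m$-equivariant finite covering. After replacing $\FF_q$ by $\FF_{q^m}$ I may assume $\xi$ itself is $\mathfrak{G}$-equivariant, and by Proposition~\ref{Hierarchy} (or simply by renaming) this loses nothing.

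\emph{($\Leftarrow$).} Suppose $\xi^\ast:\overline{\FF_q}(Y)\hookrightarrow\overline{\FF_q}(X)$ is Galois with group $H_0:={\rm Gal}(\overline{\FF_q}(X)/\xi^\ast\overline{\FF_q}(Y))$. Each $\sigma\in H_0$ is an automorphism of $\overline{\FF_q}(X)$ fixing $\xi^\ast\overline{\FF_q}(Y)$ pointwise; since $X$ is a smooth projective curve, $\sigma$ corresponds to a unique birational, hence (by smoothness and Proposition~3.2.6 of \cite{NX}) regular automorphism $h_\sigma:X\to X$ with $\xi\circ h_\sigma=\xi$. This gives an injective group homomorphism $H_0\to{\rm Aut}(X)$; call its image $H$. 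I would then check that, after possibly enlarging $m$ again so that all the finitely many automorphisms $h_\sigma$ are defined over $\FF_{q^m}$, each $h_\sigma$ commutes with the $\mathfrak{G}=\mathfrak{G}_m$-action (a Galois-equivariance computation on coefficients, exactly as in the last paragraph of the proof of Proposition~\ref{FiniteCoveringOfCurves}), so $H\leq Z(\mathfrak{G})\leq{\rm Sym}(X)$. The conditions $H\cap\mathfrak{G}=\{{\rm Id}_X\}$ (an element of $\mathfrak{G}$ acting trivially on the generic point must be the identity, as $X$ is infinite and faithful by Lemma~\ref{FaithfulModule}) and ${\rm Stab}_H(x)=\{{\rm Id}_X\}$ for all but finitely many $x$ (a nontrivial $h_\sigma$ has only finitely many fixed points on a curve) then let Lemma-Definition~\ref{GaloisCover} produce the $H$-Galois covering $\xi_H:X\to X/H$. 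Finally, since $|H|=[\overline{\FF_q}(X):\xi^\ast\overline{\FF_q}(Y)]=\deg\xi$ and the generic fibre of $\xi$ is a single $H$-orbit, the $\mathfrak{G}$-equivariant surjection $\psi:Y\to X/H$, $\psi(\xi(x))={\rm Orb}_H(x)$, is a degree-one covering, hence (being a morphism of smooth curves birational onto $X/H$) the desired birational morphism closing~(\ref{GaloisCoveringDiagram}).

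\emph{($\Rightarrow$).} Conversely, given $\xi_H:X\to X/H$ and $\psi:Y\to X/H$ birational with $\psi\circ\xi=\xi_H$, the birational map $\psi$ induces an isomorphism $\psi^\ast:\overline{\FF_q}(X/H)\xrightarrow{\ \sim\ }\overline{\FF_q}(Y)$, so it suffices to show $\overline{\FF_q}(X)/\xi_H^\ast\overline{\FF_q}(X/H)$ is Galois with group $H$. The elements of $H$ are $\mathfrak{G}$-equivariant automorphisms of $X$ with $\xi_H\circ h=\xi_H$, hence act as field automorphisms of $\overline{\FF_q}(X)$ fixing $\xi_H^\ast\overline{\FF_q}(X/H)$; this gives $H\hookrightarrow{\rm Gal}$ of the (a priori possibly non-normal) extension. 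Because the generic fibre of $\xi_H$ is exactly one $H$-orbit of size $|H|$, the extension has degree $|H|$, forcing $H=\operatorname{Aut}(\overline{\FF_q}(X)/\xi_H^\ast\overline{\FF_q}(X/H))$ to have order equal to the field degree, which is precisely the criterion for the extension to be Galois; separability comes from $\xi$ being assumed separable. Transporting along $\psi^\ast$ identifies this with ${\rm Gal}(\overline{\FF_q}(X)/\xi^\ast\overline{\FF_q}(Y))=H$.

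\emph{The "in particular" clause.} Assuming $\xi^\ast$ Galois, the first part produces $H$, the integer $m$, and the covering $\xi_H:X\to X/H$ together with the birational morphism $\psi:Y\to X/H$. A birational morphism of smooth projective curves is an isomorphism away from finitely many points; let $U\subseteq X/H$ be the cofinite open over which $\psi$ is an isomorphism, put $Y_o:=\psi^{-1}(U)$, $X_o:=\xi_H^{-1}(U)\subseteq X$, and enlarge $m$ to some $s$ so that the finitely many points in $X\setminus X_o$ and $Y\setminus Y_o$ (and all data involved) are $\FF_{q^s}$-rational; then $X_o,Y_o$ are $\mathfrak{G}_s$-submodules with finite complements. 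Over $X_o$ we have $\psi|:Y_o\xrightarrow{\sim}U$, so $\xi|_{X_o}$ and $\xi_H|_{X_o}$ agree under this identification, and $\xi_H|_{X_o}:X_o\to X_o/H$ is by construction the $H$-Galois covering for $H={\rm Gal}(\overline{\FF_q}(X)/\xi^\ast\overline{\FF_q}(Y))$.

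\textbf{Main obstacle.} I expect the genuinely delicate point to be the equivariance bookkeeping: verifying that, after one suitable finite base change $\FF_q\rightsquigarrow\FF_{q^m}$, \emph{all} the automorphisms in $H$ are simultaneously $\mathfrak{G}_m$-equivariant self-maps of $X$ lying in the centralizer $Z(\mathfrak{G}_m)$, and that the Galois group $H={\rm Gal}(\overline{\FF_q}(X)/\xi^\ast\overline{\FF_q}(Y))$ really is carried faithfully onto a subgroup of ${\rm Sym}(X)$ satisfying the three hypotheses of Lemma-Definition~\ref{GaloisCover}. The rest is a matter of matching the combinatorial notion of "finite covering with generic fibre a single $H$-orbit" to the algebraic notion of "Galois extension with group $H$", for which the key numerical input is the coincidence $|H|=\deg\xi=[\overline{\FF_q}(X):\xi^\ast\overline{\FF_q}(Y)]$.
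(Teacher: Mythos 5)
Your argument is essentially the paper's own: both directions hinge on the identifications $\overline{\FF_q}(X/H)=\overline{\FF_q}(X)^H$, the degree count $|H|=\deg\xi_H=[\overline{\FF_q}(X):\overline{\FF_q}(X)^H]$, and the fact that a subgroup of the automorphism group of a separable extension whose order equals the field degree forces the extension to be Galois, with the same equivariance/fixed-point checks (via coefficients and the discriminant of a primitive element) used to verify the hypotheses of Lemma--Definition~\ref{GaloisCover}. The only cosmetic difference is the ordering: you build the $H$-action on $X$ and define $\psi$ pointwise by $\psi(\xi(x))={\rm Orb}_H(x)$, whereas the paper first produces $\psi$ from the function-field isomorphism $(\xi^\ast)^{-1}\xi_H^\ast$ and only then verifies that $\xi_H$ is an $H$-Galois covering of $\mathfrak{G}_m$-modules.
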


\begin{proof}

Let us suppose that there exist $m \in \NN$,  a finite subgroup $H \leq Z( \mathfrak{G}_m) \leq {\rm Sym} (X)$ with
$H \cap \mathfrak{G}_m = \{ {\rm Id} _X \}$,  ${\rm Stab} _H (x) = \{ {\rm Id} _X \}$ for $\forall x \in X \setminus \{ x_1, \ldots , x_l \}$ and  a birational morphism $\psi : Y \rightarrow X/H$, closing the commutative diagram (\ref{GaloisCoveringDiagram}).
We claim that the function field $\overline{\FF_q} (X/H) = \overline{\FF_q}(X)^H$ of the $H$-quotient $X/H$ coincides with the subfield of the $H$-invariants of $\overline{\FF_q}(X)$.
Indeed, the image of the embedding $\xi _H ^* : \overline{\FF_q} (X/H) \hookrightarrow \overline{\FF_q}(X)$ consists of $H$-invariants, as far as for
 $\forall \rho \in \overline{\FF_q} (X/H)$ and $\forall h \in H$ there holds
$h^* ( \xi _H ^* ( \rho)) =  \rho \circ \xi _H \circ h = \rho \circ \xi _H = \xi _H ^* (\rho)$.
 Conversely, any $\rho \in \overline{\FF_q} (X)^H$ can be viewed as a rational function $\rho : X/H \rightarrow \PP^1 ( \overline{\FF_q})$,
 $\rho ( \xi _H( x)) = \rho (x)$ on $X/H$, since $\rho (x) = h^* ( \rho) (x) = \rho (hx)$ for $\forall x \in X$ and $\forall h \in H$.
 The aforementioned correspondence $\overline{\FF_q} (X)^H \rightarrow \overline{\FF_q} (X/H)$ establishes the invertibility of the embedding
 $\xi _H ^* : \overline{\FF_q} (X/H) \rightarrow \overline{\FF_q}(X) ^H$ and allows to identify $\overline{\FF_q} (X/H) = \overline{\FF_q} (X) ^H$.

On one hand, $|H| = \deg \xi _H = [ \overline{\FF_q} (X) : \overline{\FF_q}(X)^H]$.
On the other hand, $H$ is a subgroup of ${\rm Gal} ( \overline{\FF_q}(X) / \overline{\FF_q} (X)^H)$, so that $|H| \leq \left| {\rm Gal} ( \overline{\FF_q} (X) / \overline{\FF_q} (X)^H) \right| \leq [ \overline{\FF_q}(X) : \overline{\FF_q} (X)^H] = \left| H \right|$ implies that $H = {\rm Gal} ( \overline{\FF_q}(X) / \overline{\FF_q} (X)^H)$ and $\overline{\FF_q}(X) \supseteq \overline{\FF_q}(X)^H$ is a Galois covering.
The commutative diagram (\ref{GaloisCoveringDiagram}) of surjective morphisms induces the commutative diagram
$$
\begin{diagram}
\node{\overline{\FF_q}(X)}  \node{\overline{\FF_q}(Y)}  \arrow{w,t}{\xi^*}  \\
\node{\mbox{  }}  \node{ \overline{\FF_q} (X/H)}  \arrow{nw,r}{\xi ^*_H} \arrow{n,r}{\psi^*}
\end{diagram}
$$
of embeddings of $\overline{\FF_q}$-algebras, so that
$
\overline{\FF_q} (X)^H = \xi _H^* \overline{\FF_q} (X/H) = \xi^* \psi ^* \overline{\FF_q} (X/H) = \xi ^* \overline{\FF_q}(Y)
$
  and $\overline{\FF_q} (X) \supseteq \xi ^* \overline{\FF_q} (Y)$ is a Galois extension.

Conversely, suppose that $\xi ^* : \overline{\FF_q} (Y) \hookrightarrow \overline{\FF_q}(X)$ is a Galois extension with Galois group
 ${\rm Gal} ( \overline{\FF_q}(X) / \xi ^* \overline{\FF_q} (Y)) = H$.
Then according to Galois theory, the $H$-fixed subfield is
$\overline{\FF_q} (X) ^H = \overline{\FF_q} (X) ^{{\rm Gal} ( \overline{\FF_q} (X) / \xi ^* \overline{\FF_q} (Y))} = \xi ^* \overline{\FF_q} (Y)$.
Making use of the isomorphism  of $\overline{\FF_q}$-algebras $\xi _H^* : \overline{\FF_q} (X/H) \rightarrow \overline{\FF_q} (X)^H$,
 one obtains an isomorphism of $\overline{\FF_q}$-algebras $(\xi^*)^{-1} \xi _H ^* : \overline{\FF_q} (X/H) \rightarrow \overline{\FF_q} (Y)$.
 Then there is a   birational map $\psi : Y \myarrow X/H$ with $\psi ^* = ( \xi ^*)^{-1} \xi _H^*$,  which is a morphism $\psi : Y \rightarrow X/H$, due to the smoothness of $Y$.
Moreover, $(\psi \xi) ^* = \xi ^* \psi ^* = \xi _H^*$ implies $\psi \xi = \xi _H$,  i.e.,  the presence of (\ref{GaloisCoveringDiagram}).

As in the proof of Lemma \ref{FiniteCoveringOfCurves}, one covers $X$ by finitely many affine open subsets $X_{i,j} := X \cap U_i \cap \xi _H ^{-1} (W_j) \subset X$, $0 \leq i \leq n$, $0 \leq j \leq m$ with
$$
\xi _H : X_{i,j} \longrightarrow (X/H) \cap W_j \subseteq W_j \simeq \overline{\FF_q} ^m,
 $$
 so that $\xi _H \vert _{X_{i,j}} = ( \xi _{i,j,1}, \ldots , \xi _{i,j,m})$ is given by an ordered $m$-tuple of rational functions $\xi _{i,j,s}$, $1 \leq s \leq m$.
Any such $\xi _{i,j,s} = \frac{f_{i,j,s}}{g_{i,j,s,}}$ is a ratio of polynomials $f_{i,j,s}, g_{i,j,s} \in \overline{\FF_q} [ x_1, \ldots , x_n]$.
If $\FF_{q^m}$ contains the coefficients of $f_{i,j,s}$ and $g_{i,j,s}$ for all $0 \leq i \leq n$, $0 \leq j \leq m$, $1 \leq s \leq m$ then
 $\xi _H : X \rightarrow X/H$ is $\mathfrak{G}_m= {\rm Gal} ( \overline{\FF_q} / \FF_{q^m})$-equivariant, whereas an $H$-Galois covering of locally finite $\mathfrak{G}_m$-modules.

The group $H$ fixes the subfield $\xi ^* \overline{\FF_q}(Y)$, while $\mathfrak{G}_m$ acts on the constant field $\overline{\FF_q}$ of $\xi ^* \overline{\FF_q}(Y)$, so that $H$ centralizes $\mathfrak{G}_m$ and $H \cap \mathfrak{G}_m = \{ {\rm Id} _X \}$.
In order to show that $H$ fixes at most finitely many points on $X$, note that the finite separable extension $\overline{\FF_q} (X) \supseteq \xi ^* \overline{\FF_q} (Y)$ has a primitive element $\theta$, $\overline{\FF_q} (X) = \xi ^* \overline{\FF_q}(Y) ( \theta)$.
Let $Y \cap W_j \subset W_j \simeq \overline{\FF_q} ^m$ be an open affine subset of $Y$, $\widetilde{h_{\theta}} (z) \in \overline{\FF_q} (Y) [z]$ be the minimal polynomial of $\theta$ over $\overline{\FF_q} (Y) = Q ( \overline{\FF_q} [Y \cap W_j])$ and
$h_{\theta} (y_1, \ldots , y_m, z) \in \overline{\FF_q} [ y_1, \ldots , y_m, z]$ be a lifting of the element of $\overline{\FF_q} [ Y \cap W_j] [z]$, obtained from $\widetilde{h_{\theta}} (z)$ by multiplication with a common multiple of the denominators of the coefficients of $\widetilde{h_{\theta}} (z)$.
Then the projective closure $Z$ of
 $$
 Z_o := \{ (a_1, \ldots , a_m, a_{m+1}) \in (Y \cap W_j) \times \overline{\FF_q} \, \vert \, h_{\theta} ( a_1, \ldots , a_m, a_{m+1}) =0 \}
 $$
is birational to $X$ and the fixed points of $h \in H$ on $X$ correspond to the multiple roots of $h_{\theta} ( a_1, \ldots , a_m, z) =0$.
These $a =( a_1, \ldots , a_m) \in Y \cap W_j  \subset W_j \simeq \overline{\FF_q} ^m$ are the zeros of the discriminant $D_z ( h_{\theta} ( y_1, \ldots , y_m, z)) \in \overline{\FF_q} [ y_1, \ldots , y_m]$ of $h_{\theta}$ with respect to $z$.
According to $D_z ( h_{\theta}) \not \in I(Y \cap W_j)$, the affine algebraic set $V ( D_z ( h_{\theta})) \cap (Y \cap W_j)$ is finite.
That shows the presence of at most finitely many fibres of $\xi _H : X \rightarrow X/H$ of cardinality $< |H|$ and $\xi _H$ is an $H$-Galois covering of locally finite $\mathfrak{G}_m$-modules.

We have shown that  if $\xi ^* : \overline{\FF_q} (Y) \hookrightarrow \overline{\FF_q} (X)$ is a finite Galois extension with Galois group
 ${\rm Gal} ( \overline{\FF_q} (X) / \xi ^* \overline{\FF_q} (Y)) = H$, then  there is a commutative diagram (\ref{GaloisCoveringDiagram}) with a birational morphism $\psi : Y \rightarrow X/H$.
 Thus, one can find a Zariski open subset $Z_o \subseteq X/H$,     which is mapped isomorphically   by
   $\psi ^{-1} : Z_o \rightarrow \psi ^{-1} (Z_o) =: Y_o$ onto a Zariski open subset $Y_o \subseteq Y$.
Further, $X_o := \xi ^{-1} (Y_o) = \xi ^{-1} \psi ^{-1} (Z_o) = (\psi \xi) ^{-1} (Z_o) = \xi _H ^{-1} (Z_o)$ is a Zariski open subset of $X$ and the restrictions $\xi \vert _{X_o} = \xi _H \vert _{X_o} : X_o \rightarrow \xi (X_o) = Y_o$ coincide, if we identify $Y_o$ with $\psi (Y_o) = \xi _H(X_o)$.

\end{proof}

Here is an immediate consequence of Lemma-Definition \ref{GaloisCover}.

\begin{corollary}  \label{InertiaIndicesOfGaloisCoverings}
If $\xi _H : N \rightarrow L = N/H$ is a finite $H$-Galois covering of locally finite $\mathfrak{G} = {\rm Gal} ( \overline{\FF_q} / \FF_q)$-modules  then the inertia map $e_{\xi} : N \rightarrow \NN$ is constant on the fibres of $\xi _H$ and an arbitrary inertia index $e \in e_{\xi _H} (N)$ is a natural  divisor of $\left| H \right|$.

In particular, for any $x \in L$ with $\deg {\rm Orb} _{\mathfrak{G}} (x) = \delta (x)$, $\left| \xi _H ^{-1} (x) \right| = \left| H \right|$ and any $y \in \xi _H ^{-1} (x)$ the pull-back $\xi _H ^{-1} {\rm Orb} _{\mathfrak{G}} (x)$ is a disjoint union of $\frac{|H|}{e_{\xi _H} (y)}$ $\mathfrak{G}$-orbits of degree $\delta (x) e_{\xi _H} (y)$ and the fibre $\xi _H ^{-1} (x)$ is a disjoint union of $\frac{|H|}{e_{\xi _H}(y)}$ $\mathfrak{G}_{\delta (x)}$-orbits of degree
 $e_{\xi _H} (y)$.
\end{corollary}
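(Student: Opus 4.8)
The plan is to deduce everything from Lemma-Definition \ref{GaloisCover} and Proposition-Definition \ref{InertiaIndices}, the only extra input being that $H$ lies in the centralizer of $\mathfrak{G}$ in ${\rm Sym}(N)$. First I would record that, since $L = N/H$ and $\xi_H(z) = {\rm Orb}_H(z)$, the fibre of $\xi_H$ over $x = {\rm Orb}_H(y) \in L$ is precisely the $H$-orbit ${\rm Orb}_H(y)$, so that $\left|\xi_H^{-1}(x)\right| = [H : {\rm Stab}_H(y)]$; by the hypotheses of Lemma-Definition \ref{GaloisCover} this index equals $|H|$ for every $x$ lying outside the finitely many fibres over $H$-fixed points, which is why $\deg \xi_H = |H|$ in the sense of Definition \ref{FiniteCovering}.

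Next I would show that the inertia map $e_{\xi_H}$ is constant along the fibres of $\xi_H$. For $h \in H$ and $y \in N$ the relation $h\gamma = \gamma h$ for all $\gamma \in \mathfrak{G}$ gives $h\bigl({\rm Orb}_{\mathfrak{G}}(y)\bigr) = {\rm Orb}_{\mathfrak{G}}(hy)$, and since $h$ is a bijection of $N$ this forces $\deg {\rm Orb}_{\mathfrak{G}}(hy) = \deg {\rm Orb}_{\mathfrak{G}}(y)$. Combined with $\xi_H(hy) = \xi_H(y)$, the definition of $e_{\xi_H}$ yields $e_{\xi_H}(hy) = e_{\xi_H}(y)$; letting $h$ range over $H$, this says exactly that $e_{\xi_H}$ is constant on each fibre ${\rm Orb}_H(y) = \xi_H^{-1}(x)$, with a common value $e := e_{\xi_H}(y)$.

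Now I would fix $x \in L$, put $\delta(x) = \deg {\rm Orb}_{\mathfrak{G}}(x)$ and pick $y \in \xi_H^{-1}(x)$. Proposition-Definition \ref{InertiaIndices}(iii) gives $\xi_H^{-1}(x) = \coprod_{y' \in S_x} {\rm Orb}_{\mathfrak{G}_{\delta(x)}}(y')$ with $\deg {\rm Orb}_{\mathfrak{G}_{\delta(x)}}(y') = e_{\xi_H}(y')$, and by the previous step all these degrees equal $e$; counting cardinalities gives $\left|\xi_H^{-1}(x)\right| = |S_x|\, e$. For $x$ outside the exceptional fibres this reads $|H| = |S_x|\, e$, hence $e \mid |H|$ and $|S_x| = |H|/e$, while for the finitely many exceptional $y$ one instead obtains $e \mid [H : {\rm Stab}_H(y)] \mid |H|$, so the divisibility assertion holds for every inertia index. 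Finally, substituting $|S_x| = |H|/e$ into Proposition-Definition \ref{InertiaIndices}(ii) presents $\xi_H^{-1}{\rm Orb}_{\mathfrak{G}}(x) = \coprod_{y' \in S_x} {\rm Orb}_{\mathfrak{G}}(y')$ as a disjoint union of $|H|/e$ orbits $\mathfrak{G}$-orbits of degree $\delta(x)\, e$, and Proposition-Definition \ref{InertiaIndices}(iii) exhibits $\xi_H^{-1}(x)$ itself as a disjoint union of $|H|/e$ $\mathfrak{G}_{\delta(x)}$-orbits of degree $e$; these are all the claims.

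I do not expect a genuine obstacle: the corollary really is immediate once one observes that $H$-translation preserves $\mathfrak{G}$-orbit degrees. The only mild care required is the bookkeeping of the finitely many ramified fibres (those over $H$-fixed points), so that the numerical identity $\left|\xi_H^{-1}(x)\right| = |H|$ and the orbit counts are stated over the complement of those fibres, exactly as in Definition \ref{FiniteCovering}; over that complement one is simply counting inside a single free $H$-orbit.
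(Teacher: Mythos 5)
Your proof is correct and follows the same route as the paper: you exploit that $H$ centralizes $\mathfrak{G}$ to see that $H$-translation $h : {\rm Orb}_{\mathfrak{G}}(y) \to {\rm Orb}_{\mathfrak{G}}(hy)$ is a bijection preserving $\mathfrak{G}$-orbit degrees, hence $e_{\xi_H}$ is constant on the $H$-orbits (fibres of $\xi_H$), and then count fibre elements via Proposition-Definition~\ref{InertiaIndices}(ii)--(iii). Your explicit treatment of the finitely many ramified fibres, where $\left|\xi_H^{-1}(x)\right| = [H : {\rm Stab}_H(y)] < |H|$ and one obtains $e \mid [H : {\rm Stab}_H(y)] \mid |H|$, is a small refinement that makes the divisibility claim hold uniformly for every inertia index, whereas the paper's own counting simply writes $|H| = \left|\xi_H^{-1}(x)\right|$ and thus only addresses unramified fibres directly.
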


\begin{proof}

According to Lemma-Definition \ref{InertiaIndices} (ii), one has $\xi _H ^{-1} {\rm Orb} _{\mathfrak{G}} (x) = \coprod\limits _{y \in S_x} {\rm Orb} _{\mathfrak{G}} (y)$ with $\deg {\rm Orb} _{\mathfrak{G}} (y) = \delta (x) e_{\xi _H} (y)$ for some subset $S_x \subseteq \xi _H ^{-1} (x)$.
The fibre $\xi _H ^{-1} (x) = {\rm Orb} _H(y)$  is the $H$-orbit of its arbitrary element $y\in \xi _H ^{-1} (x)$, so that there is a subset $S'_x \subseteq H$ of cardinality $\left| S'_x \right| = \left| S_x \right|$, such that
$
\xi _H ^{-1} {\rm Orb} _{\mathfrak{G}} (x) = \coprod\limits _{h \in S'_x} {\rm Orb} _{\mathfrak{G}} (hy).
$
Bearing in mind that $\forall h \in H$ centralizes $\mathfrak{G}$, one observes that
$$
h: {\rm Orb} _{\mathfrak{G}} (y) \longrightarrow h {\rm Orb} _{\mathfrak{G}} (y) = {\rm Orb} _{\mathfrak{G}} (hy), \quad
h ( \gamma (y)) = (h \gamma) (y) \ \ \mbox{  for } \ \ \forall \gamma \in \mathfrak{G}
$$
is a bijective map with inverse
$$
h^{-1} : {\rm Orb} _{\mathfrak{G}} (hy) \longrightarrow {\rm Orb} _{\mathfrak{G}} (y),  \quad
h^{-1} ( \gamma h (y)) = h^{-1} ( h \gamma (y)) = \gamma (y) \ \ \mbox{  for } \ \ \forall \gamma \in \mathfrak{G}.
$$
Therefore $\delta (x) e_{\xi _H} (h(y)) = \deg {\rm Orb} _{\mathfrak{G}} (h(y)) = \deg {\rm Orb} _{\mathfrak{G}} (y) = \delta (x) e_{\xi _H} (y)$ for
 $\forall h \in H$ and the inertia map $e_{\xi _H} : N \rightarrow \NN$ is constant along the $H$-orbits on $N$.
Now (ii) from Lemma-Definition \ref{InertiaIndices} reads as $\xi _H ^{-1} {\rm Orb} _{\mathfrak{G}} (x) = \coprod\limits _{h \in S'_x}  {\rm Orb} _{\mathfrak{G}} (h(y))$ with $\deg {\rm Orb} _{\mathfrak{G}} (h(y)) = \delta (x) e_{\xi _H} (y)$ for $\forall h \in S'_x$.
Similarly,    Lemma-Definition \ref{InertiaIndices} (iii) takes the form $\xi _H ^{-1} (x) = \coprod\limits  _{h \in S'_x} {\rm Orb} _{\mathfrak{G} _{\delta (x)}}  (h(y))$ with $\deg {\rm Orb} _{\mathfrak{G} _{\delta (x)}} (h(y)) = e_{\xi _H} (y)$ for $\forall h \in S'_x$.
Counting the elements of the fibre of $x$, one obtains
$
\left| H \right| = \left| \xi _H ^{-1} (x) \right| = \left| S'_x \right| e_{\xi _H} (y),
$
whereas $|S_x| = |S'_x| = \frac{|H|}{e_{\xi _H}(y)}$.

\end{proof}

Let $\xi _H : N \rightarrow N/H$ be an  $H$-Galois covering of locally finite $\mathfrak{G} = {\rm Gal} ( \overline{\FF_q} / \FF_q)$-modules.
The next proposition describes the ''twists'' of the locally finite $\mathfrak{G}$-action on $N$ and  $N/H$ by the means of an arbitrary element  $h \in H$.

\begin{proposition}   \label{TiwstedProfiniteAction}
Let $N$ be an infinite locally finite module over $\mathfrak{G} = {\rm Gal} ( \overline{\FF_q} / \FF_q)$, $H \leq Z( \mathfrak{G})$ be a finite subgroup of the centralizer of $\mathfrak{G}$ with $H \cap \mathfrak{G} = \{ {\rm Id} _N \}$, ${\rm Stab} _H (x) = \{ {\rm Id} _N \}$ for $\forall x \in N \setminus \{ x_1, \ldots , x_s \}$ and $\varphi = h \Phi _q ^r$ for some $h \in H$, $r \in \NN$.
Then:

(i) $N$ is a locally finite module over the profinite completion $\mathfrak{G} ( \varphi) = \widehat{ \langle \varphi \rangle}$ of the infinite cyclic group $\langle \varphi \rangle \simeq ( \ZZ, +)$;

(ii)  $\mathfrak{G} ( \varphi)$ is a subgroup of $H \times \mathfrak{G} \leq {\rm Sym} (N)$ and the second canonical projection
${\rm pr} _2 : H \times \mathfrak{G} \rightarrow \mathfrak{G}$, ${\rm pr} _2 (h, \gamma) = \gamma$ provides a locally finite action
 \begin{equation}   \label{ProjectedAction}
 \mathfrak{G}( \varphi) \times N/H  \rightarrow N/H,  \quad  \gamma {\rm Orb} _H (x) = {\rm pr} _2 ( \gamma) {\rm Orb} _H (x) \ \ \mbox{  for}  \ \
  \forall \gamma \in \mathfrak{G}(\varphi), \ \ \forall x \in N
 \end{equation}
  of $\mathfrak{G}( \varphi)$ on $N/H$;

(iii) the $H$-Galois covering $\xi _H : N \rightarrow N/H$, $\xi _H (x) = {\rm Orb} _H (x)$ for $\forall x \in N$ is a finite covering of $\mathfrak{G} ( \varphi)$-modules.
\end{proposition}

\begin{proof}

(i) First of all, $\varphi = h \Phi _q^r$ is of infinite order.
Otherwise, for $h$ of order $m$ and $\varphi$ of order $l$, one has ${\rm Id} _N = \varphi ^{ml} = h^m \Phi _q ^{rml} = \Phi _q ^{rml}$ and the Frobenius automorphism $\Phi _q : N \rightarrow N$ turns to be of finite order.
This is an absurd, justifying $\langle \varphi \rangle \simeq (\ZZ, +)$.
Since $H \leq \mathfrak{G} ( \varphi)$ and $H \cap \mathfrak{G} = \{ {\rm Id} _N \}$, the product $H \mathfrak{G} \simeq H \times \mathfrak{G}$  of the subgroups $H$ and $\mathfrak{G}$ of ${\rm Sym} (N)$ is direct.
The subgroup $\langle \varphi \rangle \leq H \mathfrak{G} \leq {\rm Sym} (N)$ acts on $N$, as well as its  profinite completion
 $\mathfrak{G} ( \varphi) = \widehat{ \langle \varphi \rangle}$.
In order to check that the $\mathfrak{G} ( \varphi)$-action on $N$ is locally finite, let us denote by $m$ the order of $h$.
For an arbitrary point $x \in N$ with $\deg {\rm Orb} _{\mathfrak{G}} (x) = \delta$, note that  $\mathfrak{G} ( \varphi ) _{m \delta} := \widehat{ \langle \varphi ^{m \delta} \rangle} = \widehat{ \langle \Phi _q ^{m \delta r} \rangle} = \mathfrak{G}_{m \delta r} \leq \mathfrak{G} _{\delta} = {\rm Stab} _{\mathfrak{G}} (x) \leq {\rm Stab} _{H \times \mathfrak{G}} (x)$.
Therefore $\mathfrak{G}( \varphi) _{m \delta} \leq \mathfrak{G} ( \varphi) \cap {\rm Stab} _{H \times \mathfrak{G}} (x) = {\rm Stab} _{ \mathfrak{G}(\varphi) } (x) \leq \mathfrak{G}( \varphi)$ and
\begin{align*}
\deg {\rm Orb} _{\mathfrak{G} ( \varphi)} (x) = [ \mathfrak{G} ( \varphi) : {\rm Stab} _{\mathfrak{G} ( \varphi)} (x)] =  \\
\frac{[ \mathfrak{G} ( \varphi) : \mathfrak{G} ( \varphi) _{m \delta} ]}{[ {\rm Stab} _{\mathfrak{G} ( \varphi)} (x) : \mathfrak{G} ( \varphi) _{m \delta} ]} =
\frac{m \delta}{[ {\rm Stab} _{\mathfrak{G} ( \varphi)} (x) : \mathfrak{G} ( \varphi) _{m \delta}]} \in \NN.
\end{align*}
That justifies the finiteness of all orbits ${\rm Orb} _{\mathfrak{G} ( \varphi)} (x) \subset N$.
For an arbitrary $n \in \NN$, the $\mathfrak{G} ( \varphi)$-orbits ${\rm Orb} _{\mathfrak{G} (\varphi)} (y)$ of
 $\deg {\rm Orb}  _{\mathfrak{G} ( \varphi)} (y)  = n$ are exactly the ones with stabilizer
 ${\rm Stab} _{\mathfrak{G} ( \varphi)} (y) = \mathfrak{G} ( \varphi) _n$.
 Let $\delta = \deg {\rm Orb} _{\mathfrak{G}} (y)$ and note that $\mathfrak{G} ( \varphi) _{nm} = \widehat{ \langle \varphi ^{nm} \rangle}  = \widehat{ \langle \Phi _q ^{nmr} \rangle} = \mathfrak{G} _{nmr}$ is a subgroup of $\mathfrak{G}$, stabilizing $y$.
Therefore  $\mathfrak{G} _{nmr} \leq {\rm Stab} _{\mathfrak{G}} (y) = \mathfrak{G} _{\delta} = \widehat{ \langle \Phi _q ^{\delta} \rangle}$, whereas
  $\Phi _q ^{nmr} \in \ \widehat{ \langle \Phi _q ^{\delta} \rangle} \cap \langle \Phi _q \rangle = \langle \Phi _q ^{\delta} \rangle$ and
   $\delta$ divides $nmr$.
 For any fixed $n \in \NN$ the natural number $nmr$ has finitely many natural divisors $\delta$.
 Since the $\mathfrak{G}$-action on $N$ is locally finite, for any natural divisor $\delta$ of $nmr$ there are at most finitely many $\mathfrak{G}$-orbits ${\rm Orb} _{\mathfrak{G}} (y) \subset N$ of $\deg {\rm Orb} _{\mathfrak{G}} (y) = \delta$.
 That justifies the presence of at most finitely many $\mathfrak{G} ( \varphi)$-orbits ${\rm Orb} _{\mathfrak{G} ( \varphi)} (y) \subset N$ of $\deg {\rm Orb} _{\mathfrak{G} ( \varphi)} (y) = n$ and shows that the $\mathfrak{G}( \varphi)$-action on $N$ is locally finite.

 (ii) The map (\ref{ProjectedAction}) is correctly defined and coincides with the restriction of the locally finite $\mathfrak{G}$-action on $N/H$ to
 ${\rm pr} _2 \mathfrak{G} ( \varphi) =  \widehat{ {\rm pr} _2 ( \langle h \Phi _q ^r \rangle )} = \widehat{ \langle {\rm pr}_2 (h \Phi _q^r) \rangle} = \widehat{ \langle \Phi _q ^r \rangle} = \mathfrak{G}_r$.
  According to Proposition \ref {Hierarchy}, $N/H$ is a locally finite $\mathfrak{G}_r$-module and, therefore, a locally finite
   $\mathfrak{G} ( \varphi)$-module.

(iii) The $\mathfrak{G}$-equivariance of $\xi _H : N \rightarrow N/H$ implies that
$$
\xi _H ( \gamma x) = \xi _H ( {\rm pr} _1 ( \gamma) {\rm pr} _2 ( \gamma) x) = \xi _H ( {\rm pr} _2 ( \gamma) x) = {\rm pr} _2 ( \gamma) \xi _H (x) \ \ \mbox{ for } \ \ \forall x \in N, \ \ \forall \gamma \in \mathfrak{G} ( \varphi)
$$
with ${\rm pr}_1 ( \gamma) \in H$, ${\rm pr}_2 ( \gamma) \in \mathfrak{G}$.
Thus, $\xi _H$ is $\mathfrak{G}( \varphi)$-equivariant and, therefore, a finite covering of locally finite $\mathfrak{G}( \varphi)$-modules.

\end{proof}

\begin{definition}   \label{GaloisClosure}
Let $\xi : M \rightarrow L$ be a finite covering of locally finite modules over $\mathfrak{G} = {\rm Gal} ( \overline{\FF_q} / \FF_q)$.
We say that $\xi$ has a Galois closure if there exist $m \in \NN$, an infinite locally finite $\mathfrak{G}_m = {\rm Gal} ( \overline{\FF_q} / \FF_{q^m})$-module $N$ and finite subgroups $H_1 \leq H \leq Z ( \mathfrak{G}_m) \leq {\rm Sym} (N)$ of the centralizer of $\mathfrak{G}_m$  with $H \cap \mathfrak{G}_m = \{ {\rm Id} _N \}$, ${\rm Stab} _H (z) = \{ {\rm Id} _N \}$ for $\forall z \in N \setminus \{ z_1, \ldots , z_s \}$, such that $M = N / H_1$, $L = N /H$ and the Galois coverings $\xi _{H_1}$, $\xi _H$, associated with $H_1$, $H$ close the commutative diagram
$$
\begin{diagram}
\node{N} \arrow{e,t}{\xi _{H_1}}  \arrow{se,r}{\xi _H} \node{M = N / H_1}  \arrow{s,r}{\xi}  \\
\node{\mbox{  }}  \node{L = N / H}
\end{diagram}.
$$
\end{definition}

\begin{corollary}    \label{NoehterNormalizationHasGaloisClosure}
Let $X / \FF_q \subseteq \PP^n ( \overline{\FF_q})$ be a smooth irreducible projective curve, defined over $\FF_q$ and $\xi : X \rightarrow \PP^1 ( \overline{\FF_q})$ be a finite separable surjective morphism.
Then there exist $m \in \NN$ and  $\mathfrak{G}_m = {\rm Gal} ( \overline{\FF_q} / \FF_{q^m})$-submodules $X_o \subseteq X$, $L_o \subseteq \PP^1 ( \overline{\FF_q})$ with at most finite complements $X  \setminus X_o$, $\PP^1 ( \overline{\FF_q}) \setminus L_o$, such that the restriction $\xi : X_o \rightarrow L_o$ is a finite covering of $\mathfrak{G}_m$-modules with a Galois closure $\xi _H : Z_o \rightarrow L_o$, supported by a smooth quasi-projective curve $Z_o \subseteq \PP^k ( \overline{\FF_q})$.
\end{corollary}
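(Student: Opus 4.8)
The plan is to construct the Galois closure of $\xi$ first at the level of function fields and then to spread it out over a finite field. By Proposition \ref{FiniteCoveringOfCurves}(ii) fix $m_1 \in \NN$ for which $\xi : X \to \PP^1(\overline{\FF_q})$ is a finite covering of locally finite $\mathfrak{G}_{m_1} = {\rm Gal}(\overline{\FF_q}/\FF_{q^{m_1}})$-modules; then $\xi^* : \overline{\FF_q}(\PP^1(\overline{\FF_q})) = \overline{\FF_q}(t) \hookrightarrow \overline{\FF_q}(X)$ is a finite separable extension. Inside a fixed algebraic closure of $\overline{\FF_q}(X)$ let $E \supseteq \overline{\FF_q}(X) \supseteq \xi^*\overline{\FF_q}(t)$ be the Galois closure of $\overline{\FF_q}(X)/\xi^*\overline{\FF_q}(t)$ and put $H := {\rm Gal}(E/\xi^*\overline{\FF_q}(t))$, $H_1 := {\rm Gal}(E/\overline{\FF_q}(X)) \leq H$. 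Since $\overline{\FF_q}$ is algebraically closed, $E$ has constant field $\overline{\FF_q}$, so $E = \overline{\FF_q}(N)$ for a smooth irreducible projective curve $N$ over $\overline{\FF_q}$, and the tower $\xi^*\overline{\FF_q}(t) \subseteq \overline{\FF_q}(X) \subseteq E$ corresponds to non-constant finite separable (hence surjective) morphisms $p : N \to X$ and $\pi := \xi \circ p : N \to \PP^1(\overline{\FF_q})$. Both are Galois coverings of smooth projective curves, with ${\rm Gal}(\overline{\FF_q}(N)/\pi^*\overline{\FF_q}(t)) = H$ and ${\rm Gal}(\overline{\FF_q}(N)/p^*\overline{\FF_q}(X)) = H_1$, the latter because $E/\overline{\FF_q}(X)$ is an intermediate extension of the Galois extension $E/\xi^*\overline{\FF_q}(t)$; in particular $\overline{\FF_q}(X) = E^{H_1}$.

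Arguing as in the proofs of Propositions \ref{FiniteCoveringOfCurves} and \ref{NSCGaloisCoveringOfCurves}, there are a multiple $m$ of $m_1$ and an embedding $N \subseteq \PP^k(\overline{\FF_q})$ for which $N$, the morphisms $p$, $\pi$ and the action of $H$ on $N$ by $\overline{\FF_q}$-automorphisms are all defined over $\FF_{q^m}$. Over the base field $\FF_{q^m}$ the extension $\pi^* : \overline{\FF_q}(\PP^1(\overline{\FF_q})) \hookrightarrow \overline{\FF_q}(N) = E$ is Galois with group $H$, so the ``in particular'' part of Proposition \ref{NSCGaloisCoveringOfCurves}, applied to $\pi : N \to \PP^1(\overline{\FF_q})$, produces $s \in m\NN$ and a $\mathfrak{G}_s = {\rm Gal}(\overline{\FF_q}/\FF_{q^s})$-submodule $L_o \subseteq \PP^1(\overline{\FF_q})$ with $\left| \PP^1(\overline{\FF_q}) \setminus L_o \right| < \infty$ such that, shrinking $L_o$ by a finite set if necessary, $Z_o := \pi^{-1}(L_o) \subseteq N$ is a cofinite $\mathfrak{G}_s$-submodule and $\xi_H := \pi|_{Z_o} : Z_o \to L_o = Z_o/H$ is an $H$-Galois covering of locally finite $\mathfrak{G}_s$-modules; in particular $Z_o$ is an infinite locally finite $\mathfrak{G}_s$-module, $H \leq Z(\mathfrak{G}_s) \leq {\rm Sym}(Z_o)$ is finite, $H \cap \mathfrak{G}_s = \{ {\rm Id} \}$ and ${\rm Stab}_H(z) = \{ {\rm Id} \}$ for $\forall z \in Z_o$. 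Enlarging $s$ I may also assume that $p$ is $\mathfrak{G}_s$-equivariant.

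It remains to deal with the second leg. As $Z_o = \pi^{-1}(L_o)$ is a union of full fibres of $\xi_H = \pi|_{Z_o}$, it is $H$-stable, hence, since $H_1 \leq H$, a union of $H_1$-orbits, i.e. saturated for $p$; moreover $H_1 \leq H \leq Z(\mathfrak{G}_s)$, $H_1 \cap \mathfrak{G}_s = \{ {\rm Id} \}$ and ${\rm Stab}_{H_1}(z) = \{ {\rm Id} \}$ for $\forall z \in Z_o$. Thus Lemma-Definition \ref{GaloisCover}, applied with $Z_o$, $H_1$ in place of $N$, $H$, shows that $p|_{Z_o} = \xi_{H_1} : Z_o \to Z_o/H_1$ is an $H_1$-Galois covering of locally finite $\mathfrak{G}_s$-modules, and the $\mathfrak{G}_s$-equivariance of $p$ together with $\overline{\FF_q}(X) = E^{H_1}$ identifies $Z_o/H_1$ with the $\mathfrak{G}_s$-submodule $X_o := p(Z_o) \subseteq X$. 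From $\xi \circ p = \pi$ and $Z_o = \pi^{-1}(L_o)$ one gets $X_o = \xi^{-1}(L_o)$, so $X \setminus X_o$ is finite (each of its points has all of its $p$-preimages in the finite set $N \setminus Z_o$) and $\xi|_{X_o} : X_o \to L_o$ is the restriction of the finite covering $\xi : X \to \PP^1(\overline{\FF_q})$ of $\mathfrak{G}_{m_1}$-modules to a saturated cofinite $\mathfrak{G}_s$-submodule, hence a finite covering of $\mathfrak{G}_s$-modules. Finally the infinite locally finite $\mathfrak{G}_s$-module $Z_o$, the finite subgroups $H_1 \leq H \leq Z(\mathfrak{G}_s)$ with the above properties, the identifications $X_o = Z_o/H_1$, $L_o = Z_o/H$ and the commutative triangle $\xi \circ \xi_{H_1} = \xi_H$ constitute a Galois closure $\xi_H : Z_o \to L_o$ of $\xi : X_o \to L_o$ in the sense of Definition \ref{GaloisClosure} (with $s$ playing the role of the $m$ in the statement), supported by the smooth quasi-projective curve $Z_o \subseteq \PP^k(\overline{\FF_q})$, a Zariski-open subset of the smooth projective curve $N$. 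The only real obstacle is organisational --- producing one cofinite $Z_o \subseteq N$ adapted simultaneously to $\pi$ and to $p$ --- and it is disposed of by the remark that a subset saturated for $\pi$ (equivalently, $H$-stable) is automatically saturated for $p$ because $H_1 \leq H$, after which Lemma-Definition \ref{GaloisCover} supplies the $H_1$-leg with no further shrinking.
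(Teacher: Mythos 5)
Your proof is correct and follows essentially the same approach as the paper: take the Galois closure of the function-field extension $\xi^*\overline{\FF_q}(\PP^1(\overline{\FF_q})) \hookrightarrow \overline{\FF_q}(X)$, realize it as a smooth projective curve $N$, spread $N$, the morphisms $p$, $\pi$ and the $H$-action out over a finite subfield $\FF_{q^m}$, and then apply the ``in particular'' part of Proposition~\ref{NSCGaloisCoveringOfCurves} to pass to cofinite $\mathfrak{G}_s$-submodules. The one organizational tidy-up over the paper's argument --- which shrinks twice, once for the birational morphism $\psi$ on the $\PP^1$-leg and again for $\psi_1$ on the $X$-leg --- is your observation that $Z_o = \pi^{-1}(L_o)$ is already $H$-saturated, hence $H_1$-saturated because $H_1 \leq H$, so Lemma-Definition~\ref{GaloisCover} supplies the $H_1$-Galois covering $\xi_{H_1} : Z_o \to Z_o/H_1 \equiv p(Z_o) = \xi^{-1}(L_o)$ with no further shrinking; this is sound and arrives at the same commutative triangle slightly more cleanly.
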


\begin{proof}

By assumption, $\xi ^* : \overline{\FF_q} ( \PP^1 ( \overline{\FF_q})) \hookrightarrow \overline{\FF_q} (X)$ is a finite separable extension and admits Galois closure $K \supseteq \xi ^* \overline{\FF_q} ( \PP^1 ( \overline{\FF_q}))$.
Since $[K : \xi ^* \PP^1 ( \overline{\FF_q  })] < \infty$, there is a smooth irreducible  projective curve $Z \subseteq \PP^k ( \overline{\FF_q})$ with function field $\overline{\FF_q} (Z) = K$.
The embeddings $\xi ^* \overline{\FF_q} ( \PP^1 ( \overline{\FF_q})) \hookrightarrow \overline{\FF_q} (Z)$, respectively, $\overline{\FF_q} (X) \subseteq  \overline{\FF_q} (Z)$ of $\overline{\FF_q}$-algebras correspond to finite separable dominant rational maps $\xi _o : Z \myarrow \PP^1 ( \overline{\FF_q})$, respectively, $\xi _1 : Z \myarrow  \PP^1 ( \overline{\FF_q})$.
Since $Z$ is smooth, $\xi _o : Z \rightarrow \PP^1 ( \overline{\FF_q})$ and $\xi _1 : Z \rightarrow X$ are finite separable morphisms.
The images $\xi _o (Z)$, respectively, $\xi _1 (Z)$ of the non-constant maps $\xi _o$, $\xi _1$  of the projective curve $Z$ are closed and, therefore, coincide with $\PP^1 ( \overline{\FF_q})$, respectively, $X$.
According to $\xi _o ^* \overline{\FF_q} ( \PP^1 ( \overline{\FF_q})) = \xi ^* \overline{\FF_q} ( \PP^1 ( \overline{\FF_q}))$ and $\xi _1 ^* = {\rm Id} : \overline{\FF_q}(X) \rightarrow \overline{\FF_q}(X)$, there is a commutative diagram
$$
\begin{diagram}
\node{\overline{\FF_q}(Z)}   \node{\overline{\FF_q} (X)}  \arrow{w,t}{\xi _1 ^*}   \\
\node{\mbox{  }}  \node{\overline{\FF_q} ( \PP^1 ( \overline{\FF_q}))}  \arrow{n,r}{\xi ^*}  \arrow{nw,r}{\xi _o ^*}
\end{diagram}
$$
of embeddings of $\overline{\FF_q}$-algebras, which corresponds  to  a  commutative diagram
$$
\begin{diagram}
\node{Z}  \arrow{e,t}{\xi _1}  \arrow{se,r}{\xi _o}  \node{X}  \arrow{s,r}{\xi}  \\
\node{\mbox{  }}  \node{\PP^1 ( \overline{\FF_q})}
\end{diagram}
$$
of finite separable surjective morphisms.
By the very construction of $Z$, the finite separable extensions $\xi _o ^* = \xi ^* : \overline{\FF_q} ( \PP^1 ( \overline{\FF_q})) \hookrightarrow \overline{\FF_q} (Z)$ and $\xi _1 ^* : \overline{\FF_q} (X) \hookrightarrow \overline{\FF_q} (Z)$ are normal, i.e., Galois.
If $H := {\rm Gal} ( \overline{\FF_q} (Z) / \xi _o ^* \overline{\FF_q} ( \PP^1 ( \overline{\FF_q})))$, respectively, $H_1 := {\rm Gal} ( \overline{\FF_q} (Z) / \overline{\FF_q} (X))$ are the corresponding Galois groups then by Proposition \ref{NSCGaloisCoveringOfCurves} there exists $m \in \NN$, such that $H_1 \leq H \leq Z( \mathfrak{G}_m) \leq {\rm Sym} (Z)$, $H \cap \mathfrak{G}_m = \{ {\rm Id} _Z \}$ and ${\rm Stab} _H (z) = \{ {\rm Id} _Z \}$ for $\forall z \in Z \setminus \{ z_1, \ldots , z_l\}$.
Moreover, there  are birational morphisms $\psi : \PP^1 ( \overline{\FF_q}) \rightarrow Z/H$, respectively, $\psi _1 : X \rightarrow Z / H_1$, closing the commutative diagram
$$
\begin{diagram}
\node{ \mbox{  }}  \node{Z/H_1}   \\
\node{Z}  \arrow{s,l}{\xi _H}   \arrow{ne,l}{\xi _{H_1}}  \arrow{e,t}{\xi _1}  \node{X} \arrow{s,r}{\xi}   \arrow{n,r}{\psi_1}  \\
\node{Z/H}  \node{\PP^1 ( \overline{\FF_q})}  \arrow{w,t}{\psi}
\end{diagram}.
$$
Let $L_o \subseteq \PP^1 ( \overline{\FF_q})$ be a Zariski open subset on which $\psi$ restricts to an isomorphism $\psi : L_o \rightarrow \psi (L_o)$ and identify $L_o \equiv \psi ( L_o)$.
The pull back $\xi ^{-1} (L_o) \subseteq X$ is a Zariski open subset and there exists a Zariski open subset $X_o \subseteq \xi ^{-1} (L_o)$ of $X$ with isomorphic restriction  $\psi _1 : X_o \rightarrow \psi _1 (X_o)$.
We identify $X_o \equiv \psi _1 (X_o)$ and note that $\xi (X_o) \subseteq L_o$, in order to identify $\xi (X_o) \equiv \psi \xi ( X_o)$.
The subset $Z_o := \xi _1 ^{-1} (X_o) \subseteq Z$ is Zariski open and its finite coverings $\xi _1 \vert _{Z_o} \equiv \xi _{H_1}  \vert _{Z_o} : Z_o \rightarrow Z_o / H_1 = X_o$, respectively, $\xi _o \vert _{Z_o} \equiv \xi _H \vert _{Z_o} : Z_o \rightarrow Z_o / H = \xi _o (Z_o) = \xi \xi _1 (Z_o) = \xi (X_o)$ coincide.
Bearing in mind that $\xi _o = \xi \xi _1$, one concludes the existence of a commutative diagram
$$
\begin{diagram}
\node{Z_o}  \arrow{se,r}{\xi _H}  \arrow{e,t}{\xi _{H_1}}  \node{X_o}  \arrow{s,r}{\xi} \\
\node{\mbox{  }}  \node{\xi (X_o)}
\end{diagram}
$$
of finite coverings of locally finite $\mathfrak{G}_m = {\rm Gal} ( \overline{\FF_q} / \FF_{q^m})$-modules for some $m \in \NN$.
The Zariski open subsets $X_o \subseteq X$ and $Z_o = \xi _1 ^{-1} (X_o) \subseteq Z$ have at most  finite complements $X \setminus X_o$, $Z \setminus Z_o$.
Therefore $Z = Z_o \coprod  \{ r_1, \ldots , r_s \}$ for some points $r_1, \ldots , r_s \in Z$.
Now, $\PP^1 ( \overline{\FF_q}) = \xi _o (Z) = \xi _o (Z_o) \cup \{ \xi _o ( r_1), \ldots , \xi _o ( r_s) \} = \xi (X_o) \cup \{ \xi _o ( r_1), \ldots , \xi _o ( r_s) \}$ implies that $\xi (X_o)$ has also at most a  finite complement $\PP^1 ( \overline{\FF_q}) \setminus \xi (X_o) \subseteq \{ \xi _o ( r_1), \ldots , \xi _o ( r_s) \}$.

\end{proof}

\section{ Riemann  Hypothesis Analogue for locally finite modules }

\begin{proposition}   \label{RHAReductionToBounds}
The following conditions are equivalent for a locally finite module $M$ over $\mathfrak{G} = {\rm Gal} ( \overline{\FF_q} / \FF_q)$  with a polynomial $\zeta$-quotient $P_M(t) = \frac{\zeta _M(t)}{\zeta _{\PP^1 ( \overline{\FF_q})} (t)} \in  \ZZ[t]$ of $deg P_M(t) = d \in \NN$ with leading coefficient ${\rm LC} ( P_M(t)) = a_d \in \ZZ$ and for  $\lambda := \log _q \sqrt[d]{\left| a_d \right|} \in \RR ^{\geq 0}$:

(i)  $M$ satisfies the Riemann Hypothesis Analogue with respect to  $\PP^1 ( \overline{\FF_q})$ as a $\mathfrak{G}$-module;

(ii) $q^r + 1 - d q^{\lambda r} \leq |M^{\Phi _{q^r}}| \leq q^r +1 + d q^{\lambda r}$ for $\forall r \in \NN$;

(iii) there exist constants $C_1, C_2 \in \RR^{> 0}$,    $\nu, r_1, r_2  \in \NN$, such that
$$
|M ^{\Phi _{q^{\nu r}}}| \leq q^{\nu r} +1 + C_1 q^{\lambda \nu r} \ \ \mbox{  for } \ \   \forall r \in \NN, \ \  r \geq r_1   \ \ \mbox{  and }
$$
$$
|M^{\Phi _{q^{\nu r}}}| \geq q^{\nu r}  +1  - C_2  q^{\lambda \nu r} \ \ \mbox{  for } \ \ \forall r \in \NN, \ \  r \geq r_2.
 $$
\end{proposition}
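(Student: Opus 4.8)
The plan is to translate all three conditions into statements about the complex numbers $\omega_1,\dots,\omega_d$ with $P_M(t)=\prod_{j=1}^d(1-\omega_jt)$, using Proposition~\ref{NSCPolynomialDzetaQuotient}. First I would record, from part~(iii) of that proposition together with $|\PP^1(\overline{\FF_q})^{\Phi_q^r}|=q^r+1$, the identity
$$
|M^{\Phi_{q^r}}|=q^r+1-\sum_{j=1}^d\omega_j^r\qquad\text{for all }r\in\NN,
$$
and also the relation $a_d={\rm LC}(P_M(t))=(-1)^d\prod_{j=1}^d\omega_j$, so that $|a_d|=\prod_{j=1}^d|\omega_j|$ and $q^\lambda=\sqrt[d]{|a_d|}$ is the geometric mean of $|\omega_1|,\dots,|\omega_d|$. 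In particular the Riemann Hypothesis Analogue~(i) is equivalent to $|\omega_j|=q^\lambda$ for all $j$, and, since the geometric mean of the $|\omega_j|$ is already known to equal $q^\lambda$, it is equivalently the one-sided assertion $|\omega_j|\le q^\lambda$ for all $j$.

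Granting this dictionary, $(i)\Rightarrow(ii)$ is immediate: if $|\omega_j|=q^\lambda$ for every $j$ then $\left|\sum_{j=1}^d\omega_j^r\right|\le\sum_{j=1}^d|\omega_j|^r=dq^{\lambda r}$, and the displayed identity turns this bound into~(ii). The implication $(ii)\Rightarrow(iii)$ is trivial, with $\nu=1$, $C_1=C_2=d$, $r_1=r_2=1$.

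The real content is $(iii)\Rightarrow(i)$. Set $C:=\max(C_1,C_2)$, $r_0:=\max(r_1,r_2)$, $\mu_j:=\omega_j^\nu\in\CC^*$ and $B:=q^{\lambda\nu}$. Replacing $r$ by $\nu r$ in the displayed identity, hypothesis~(iii) reads $\left|\sum_{j=1}^d\mu_j^r\right|\le CB^r$ for all $r\ge r_0$. I would then examine the rational function
$$
F(t)=\sum_{r=1}^{\infty}\Bigl(\sum_{j=1}^d\mu_j^r\Bigr)t^r=\sum_{j=1}^d\frac{\mu_jt}{1-\mu_jt}=-d+\sum_{j=1}^d\frac{1}{1-\mu_jt}.
$$
On the one hand, the coefficient bound gives $\limsup_{r\to\infty}\left|\sum_{j=1}^d\mu_j^r\right|^{1/r}\le B$, so $F(t)$ has radius of convergence at least $1/B$; for a rational function this radius equals the distance from $0$ to its nearest pole. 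On the other hand, from the partial-fraction form one sees that the poles of $F$ are exactly the points $1/\mu_j$, with no cancellation: if a value $\mu$ is attained by exactly $m$ of the $\mu_j$, then $F$ has a simple pole at $1/\mu$ with residue $-m/\mu\ne0$ (here $\mu\ne0$ because $\omega_j\in\CC^*$). The nearest such pole is at distance $1/\max_j|\mu_j|$, so $\max_j|\mu_j|\le B$, i.e.\ $|\omega_j|\le q^\lambda$ for all $j$, which by the first paragraph is~(i).

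I expect the only delicate point to be the ``no cancellation of poles'' claim, and this is exactly why I would carry $F(t)$ in the partial-fraction form $-d+\sum_j(1-\mu_jt)^{-1}$, where the residue computation is one line. Everything else reduces to Proposition~\ref{NSCPolynomialDzetaQuotient} and elementary estimates; one could alternatively pass from $M$ to its $\mathfrak{G}_\nu$-restriction via Proposition~\ref{Hierarchy}, but this is unnecessary since the power-sum identity already holds for every exponent.
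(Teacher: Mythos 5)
Your proposal is correct and takes essentially the same route as the paper's own proof: translate everything to the reciprocal roots $\omega_1,\dots,\omega_d$ via Proposition~\ref{NSCPolynomialDzetaQuotient}~(iii), get $(i)\Rightarrow(ii)$ by the triangle inequality, note $(ii)\Rightarrow(iii)$ is trivial, and for $(iii)\Rightarrow(i)$ derive a coefficient bound on the power series $\sum_r\left(\sum_j\omega_j^{\nu r}\right)t^r$, convert it into a lower bound on the radius of convergence, identify that radius with $1/\max_j|\omega_j|^\nu$, and finally upgrade $|\omega_j|\le q^\lambda$ to equality using $\prod_j|\omega_j|=|a_d|=q^{\lambda d}$. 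The one place you go beyond the paper is worth flagging: the paper simply asserts that $H(t)=\sum_j\frac{\omega_j^\nu t}{1-\omega_j^\nu t}$ has radius of convergence exactly $\min_j 1/|\omega_j|^\nu$, which is a priori only an upper bound (poles from different summands could in principle cancel); your residue computation, showing that a value $\mu$ attained by $m$ of the $\mu_j$ contributes a genuine simple pole at $1/\mu$ with residue $-m/\mu\ne 0$, closes that small gap cleanly. So the argument is the same in substance, with your version being slightly more rigorous at the radius-of-convergence step.
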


\begin{proof}

$(i) \Rightarrow (ii)$ If $P_M(t) = \prod\limits _{j=1} ^d (1 - q^{\lambda} e^{i \varphi _j})$ for some $\varphi _j \in [0, 2 \pi)$ then
$$
\left| \PP^1 ( \overline{\FF_q}) ^{\Phi _q ^r} \right| - \left| M ^{\Phi _q ^r} \right| = \sum\limits _{j=1} ^d q^{\lambda r} e^{ i r \varphi _j} \ \  for  \ \ \forall r \in \NN
$$
 by Proposition \ref{NSCPolynomialDzetaQuotient}.
Therefore, the absolute value
$$
\left|    \left| M ^{\Phi _q ^r} \right| - (q^r+1)  \right| =   \left|  \sum\limits _{j=1} ^d q^{\lambda r} e^{i r \varphi _j} \right| \leq \sum\limits _{j=1} ^d \left| q^{\lambda r} e^{i r \varphi _j} \right| = \sum\limits _{j=1} ^d q^{\lambda r} = d q^{\lambda r}
$$
and there holds (ii).

$(ii) \Rightarrow (iii)$  is trivial

$(iii) \Rightarrow (i)$   Note that the power series
$
H(t) := \sum\limits _{j=1} ^d \frac{\omega _j ^{\nu} t}{1 - \omega _j ^{\nu} t}
$
has radius of convergence
$
\rho = \min \left( \frac{1}{\left| \omega _1 \right| ^{\nu}}, \ldots , \frac{1}{\left| \omega _d \right| ^{\nu}} \right),
$
i.e., $H(t) < \infty$ converges for $\forall t \in \CC$ with $|t| < \rho$ and $H(t) = \infty$ diverges for $\forall t \in \CC$ with $|t| > \rho$.
Making use of
$
\frac{1}{1 - \omega _j ^{\nu} t} = \sum\limits _{i=0} ^{\infty} \omega _j ^{\nu i} t^{i}
$
and exchanging the summation order, one represents
$$
H(t) = \sum\limits _{i=0} ^{\infty} \left( \sum\limits _{j=1} ^d \omega _j ^{\nu (i+1)} \right) t^{i+1}.
$$
Let $C := \max ( C_1, C_2)$, $r_0 := \max ( r_1, r_2)$ and note that assumption (iii) implies that
$$
\left| \sum\limits _{j=1} ^d \omega _j ^{\nu r} \right| = \left| \left| M ^{\Phi _q ^r} \right| - (q^{\nu r} +1) \right| \leq C q^{\lambda \nu r} \ \
\mbox{  for  } \ \ \forall r \in \NN, \ \ r \geq r_0,
$$
according to Proposition \ref{NSCPolynomialDzetaQuotient}.
Thus, $\left| \sum\limits _{j=1} ^d \omega _j ^{\nu ( i+1)} \right| \leq C q^{\lambda \nu (i+1)}$    for   $\forall i \in \ZZ,$  $i \geq r_0-1$   and
$$
|H(t)| \leq \sum\limits _{i=0} ^{\infty} \left| \sum\limits _{j=1} ^d \omega  _j ^{\lambda (i+1)} \right| t^{i+1} \leq C \sum\limits _{i=0} ^{\infty} q^{\lambda \nu (i+1)} t^{i+1} = C \sum\limits _{i=0} ^{\infty} (q^{\lambda \nu} t) ^{i+1}.
$$
As a result, $H(t)$ converges for $\forall t \in \CC$ with $|t| < \frac{1}{q^{\lambda \nu}}$, whereas $\frac{1}{q^{\lambda \nu}} \leq \rho \leq \frac{1}{\left| \omega  _j \right| ^{\nu}}$ for $\forall 1 \leq j \leq d$.
The function $f(x) = x^{\nu}$ increases for $x \in \RR ^{\geq 0}$, as far as its derivative $f'(x) = \nu x^{\nu -1} \geq 0$.
Thus, $\left| \omega _j \right| ^{\nu} \leq q^{\lambda \nu}$ implies $\left| \omega _j \right| \leq q^{\lambda}$.
The leading coefficient $a_d = {\rm LC} (P_M(t)) \in \ZZ \setminus \{ 0 \}$ of $P_M(t)$ satisfies the inequality
$$
\left| a_d \right| = \left| \prod\limits _{j=1} ^d ( - \omega  _j) \right| = \prod\limits _{j=1} ^d \left| \omega _j \right| \leq q^{\lambda d} = \left( \sqrt[d] { \left| a_d \right|} \right) ^d = \left| a_d \right|
$$
with equality, so that $\left| \omega _j \right|  = q^{\lambda}$ for $\forall 1 \leq j \leq d$ and $M$ satisfies the Riemann Hypothesis Analogue with respect to $\PP^1 ( \overline{\FF_q})$ as a locally finite module over $\mathfrak{G} = {\rm Gal} ( \overline{\FF_q} / \FF_q)$.

\end{proof}

\begin{corollary}     \label{RHAEquivalentsForTwists}
Let $M$ be an infinite locally finite module over $\mathfrak{G} = {\rm Gal} ( \overline{\FF_q} / \FF_q) = \widehat{ \langle \Phi _q \rangle}$ with 
$\zeta$-function $\zeta _M(t) = \frac{P_M(t)}{(1-t)(1-qt)}$ for a polynomial $P_M(t) \in \ZZ[t]$.
 Suppose that $h \in Z( \mathfrak{G}) \leq {\rm Sym} (M)$ is  an element of order $m$ with $\langle h \rangle \cap \mathfrak{G} = \{ {\rm Id} _M \}$,
  ${\rm Stab} _{\langle h \rangle} (x) = \{ {\rm Id} _M \}$ for $\forall x \in M \setminus \{ x_1, \ldots , x_s \}$, 
   $\mathfrak{G} (h \Phi _q ^n) = \widehat{ \langle h \Phi _q ^n \rangle}$  is  the profinite completion of 
   $(\ZZ, +) \simeq \langle h \Phi _q ^n \rangle \leq {\rm Sym} (M)$ for some $n \in \NN$ and the locally finite $\mathfrak{G}( \varphi)$-module  $M(\varphi)$, supported by $M$ has  $\zeta$-function $\zeta _{M( \varphi)} (t) = \frac{P_{M( \varphi)} (t)}{(1-t)(1-qt)}$ for some polynomial
    $P_{M( \varphi)}(t) \in \ZZ[t]$.  
Then $M$ satisfies the Riemann Hypothesis Analogue with respect to $\PP^1 ( \overline{\FF_q})$ as a locally finite $\mathfrak{G}$-module if and only if $M (\varphi)$ is subject to the Riemann Hypothesis Analogue with respect to $\PP^1 ( \overline{\FF_q})$  as a module over $\mathfrak{G} (h \Phi _q ^n)$.
\end{corollary}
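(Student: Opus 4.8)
The plan is to recognise both Riemann Hypothesis Analogues — the one for $M$ over $\widehat{\langle\Phi_q\rangle}$ and the one for $M(\varphi)$ over $\widehat{\langle\varphi\rangle}$, where $\varphi = h\Phi_q^n$ — as the Analogue for one and the same auxiliary module, namely the module obtained by restricting the profinite action to $\widehat{\langle\varphi^m\rangle}$, and then to invoke Proposition \ref{Hierarchy}. The starting point is the identity $\varphi^m = (h\Phi_q^n)^m = h^m\Phi_q^{nm} = \Phi_q^{nm}$, which holds because $h \in Z(\mathfrak{G})$ commutes with $\Phi_q$ and has order $m$; consequently $\varphi^{mk} = \Phi_q^{nmk}$ for every $k \in \NN$, the fixed-point sets $M(\varphi)^{\varphi^{mk}} = M^{\Phi_q^{nmk}}$ coincide, and the restriction of $M(\varphi)$ to $\widehat{\langle\varphi^m\rangle}$ is, as a locally finite module, literally the restriction $M_{nm}$ of $M$ to $\widehat{\langle\Phi_q^{nm}\rangle} = \widehat{\langle\Phi_{q^{nm}}\rangle}$; in particular the two have the same $\zeta$-function.

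Next I would apply Proposition \ref{Hierarchy} twice. By Proposition \ref{TiwstedProfiniteAction}(i) (with $H = \langle h\rangle$), $M(\varphi)$ is a locally finite module over the profinite completion of $\langle\varphi\rangle \simeq (\ZZ,+)$, and by hypothesis $\zeta_{M(\varphi)}(t)(1-t)(1-qt) = P_{M(\varphi)}(t) = \prod_\ell(1-\rho_\ell t) \in \ZZ[t]$; so Proposition \ref{Hierarchy} (generator $\varphi$, $r = m$) gives that $M(\varphi)$ satisfies the Analogue with respect to $\PP^1(\overline{\FF_q})$ if and only if $M_{nm}$ does, with $\zeta_{M_{nm}}(t)(1-t)(1-q^m t) = \prod_\ell(1-\rho_\ell^m t)$. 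Likewise Proposition \ref{Hierarchy} (generator $\Phi_q$, $r = nm$) gives that $M$ satisfies the Analogue if and only if $M_{nm}$ does, with $\zeta_{M_{nm}}(t)(1-t)(1-q^{nm}t) = \prod_j(1-\omega_j^{nm}t)$, where $P_M(t) = \prod_j(1-\omega_j t)$. Comparing the two factorisations of $\zeta_{M_{nm}}$ yields the polynomial identity $(1-q^m t)\prod_j(1-\omega_j^{nm}t) = (1-q^{nm}t)\prod_\ell(1-\rho_\ell^m t)$, equivalently the multiset identity $\{q^m\}\cup\{\omega_j^{nm}\}_j = \{q^{nm}\}\cup\{\rho_\ell^m\}_\ell$ of reciprocal roots.

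The main obstacle is that Proposition \ref{Hierarchy} measures ``the Analogue for $M_{nm}$'' against $\PP^1(\overline{\FF_{q^{nm}}})$ on the $M$-side and against $\PP^1(\overline{\FF_{q^m}})$ on the $M(\varphi)$-side, so one must still check that these two conditions on $M_{nm}$ agree. For $n = 1$ the two reference fields coincide and the multiset identity collapses to $\{\omega_j^m\}_j = \{\rho_\ell^m\}_\ell$, whence ``all $|\omega_j|$ equal'' holds if and only if ``all $|\rho_\ell|$ equal'' does, and the proof is finished. For $n \geq 2$ one has ${\rm GCD}(1-q^m t, 1-q^{nm}t) = 1$, so the identity forces $\prod_j(1-\omega_j^{nm}t) = (1-q^{nm}t)Q(t)$ with $Q \in \ZZ[t]$, $Q(0) = 1$, hence $\zeta_{M_{nm}}(t) = Q(t)/(1-t)$; then the non-negativity of the divisor counts $\cA_s(M_{nm})$ and of the orbit counts $B_k(M_{nm})$ from Proposition \ref{DzetaFunctionExpressions} forces, by a Kronecker-type argument on the reciprocal roots of $Q$ (algebraic integers whose full sets of conjugates lie among themselves and have absolute value $\leq 1$, hence are roots of unity), that every reciprocal root of $P_M$ except one — which equals $\pm q$ — is a root of unity, and symmetrically for $P_{M(\varphi)}$, while $\deg P_M = \deg P_{M(\varphi)} = \deg Q + 1$. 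Hence $M$ (and likewise $M(\varphi)$) can satisfy the modulus condition of Definition \ref{RHA} only when $\deg P_M = \deg P_{M(\varphi)} = 1$, in which case that condition is vacuous and both Analogues hold; either way the desired equivalence follows. An alternative to this last paragraph replaces Proposition \ref{Hierarchy} by Proposition \ref{RHAReductionToBounds}: the identity $|M(\varphi)^{\varphi^{mk}}| = |M^{\Phi_q^{nmk}}|$ transports the upper and lower point-count bounds of its criterion (iii) from the progression $r = nmk$ to the progression $r = mk$ and back, the bookkeeping of the growth exponents $\lambda_M$ and $\lambda'$ being pinned down, once more, by the collapse to degree $1$ when $n \geq 2$.
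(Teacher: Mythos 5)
Your proposal takes a genuinely different route from the paper's. The paper's proof is a two-line application of Proposition \ref{RHAReductionToBounds}: using $(h\Phi_q^n)^m = \Phi_q^{mn}$ to identify the relevant fixed-point counts, it translates the bounds of criterion (ii) for $M(\varphi)$ into bounds on $|M^{\Phi_q^{mnr}}|$ along the progression $r \in mn\NN$, and then invokes $(iii)\Rightarrow(i)$ for $M$. You instead pass through Proposition \ref{Hierarchy} and the common restriction $M_{nm}$, comparing the two resulting factorizations of $\zeta_{M_{nm}}$. Both are built on the same observation $\varphi^m = \Phi_q^{mn}$, but yours makes the intermediate module explicit and in doing so surfaces the real difficulty.

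That difficulty is genuine, and your proposal actually does a better job of flagging it than the paper does. In the paper's proof the $M(\varphi)$-side bound, after restricting to $s=mr$, has main term $q^{mr}$ and exponent $\lambda' mr$ (where $\lambda' = \log_q\sqrt[d']{|\mathrm{LC}\,P_{M(\varphi)}|}$), whereas the paper writes $q^{mnr}$ and $q^{\lambda mnr}$ with the $M$-side $\lambda$; these agree only when $n=1$ (in which case the multiset identity $\{\omega_j^m\}=\{\rho_\ell^m\}$ also gives $\lambda' = \lambda$). This is exactly the ``main obstacle'' you articulate: the comparison lines $\PP^1(\overline{\FF_{q^m}})$ and $\PP^1(\overline{\FF_{q^{nm}}})$ differ for $n\geq 2$. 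Your treatment of $n=1$ is clean and complete and is, in substance, the case the paper's proof handles correctly.

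Your handling of $n\geq 2$ is the weak point. The multiset identity you derive, $\{q^m\}\cup\{\omega_j^{nm}\}_j = \{q^{nm}\}\cup\{\rho_\ell^m\}_\ell$, is correct, and it does force $d=d'$, some $|\omega_{j_0}|=q$ and some $|\rho_{\ell_0}|=q$, with $\{\omega_j^{nm}\}_{j\neq j_0}=\{\rho_\ell^m\}_{\ell\neq\ell_0}$. But the conclusion you draw from it — that every reciprocal root of $P_M$ except one is a root of unity, hence $\deg P_M=1$ — does not follow from the non-negativity of $\cA_s(M_{nm})$ and $B_k(M_{nm})$ by the sketched Kronecker argument alone. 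The reciprocal roots of $Q$ are algebraic integers, but the inequality $\sum_j q_j^r \leq 1$ (from $|M_{nm}^{\Phi_q^{nmr}}|\geq 0$) does not directly give $|q_j|\leq 1$ without a non-trivial equidistribution/Pringsheim argument, and in fact for $M$ infinite the point-count $|M_{nm}^{\Phi_q^{nmr}}|$ must be unbounded, so some $|q_j|>1$; the sketch as written is therefore internally inconsistent. More directly: the multiset identity already shows that for $n\geq 2$ and $d\geq 2$, RHA for $M$ forces $\lambda=1$ and then $|\rho_\ell|\in\{q,q^n\}$ with both values attained, so RHA for $M(\varphi)$ \emph{fails}; and symmetrically. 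So for $n\geq 2, d\geq 2$ the equivalence can only hold vacuously (both sides false), which needs to be argued explicitly rather than by an incomplete root-of-unity detour. In short: your $n=1$ argument is correct and essentially equivalent to what the paper's proof establishes; your identification of the $n\geq 2$ gap is valuable; but your resolution of that gap is not a proof.
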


\begin{proof}

By $(i) \Rightarrow (ii)$  from Proposition \ref{RHAReductionToBounds}, the Riemann Hypothesis Analogue over $\mathfrak{G} ( h \Psi _q ^n)$ with $(h \Phi _q ^n) ^m = \Phi _q ^{mn}$ is equivalent to
$$
\left| M ^{\Phi _q ^{mnr}} \right| \leq q^{mnr} +1 + d'q^{\lambda mnr}, \ \
\left| M ^{\Phi _q ^{mnr}} \right| \geq q^{mnr} +1 - d'q^{\lambda mnr} \ \ \mbox{  for  } \ \ \forall r \in \NN
$$
and  $d'= \deg P_{M( \varphi)} (t)$.
According to  $(iii) \Rightarrow (i)$  from Proposition \ref{RHAReductionToBounds}, that is a necessary and sufficient condition for the Riemann Hypothesis Analogue over $\mathfrak{G}$.

\end{proof}

Let $M$ be a locally finite module over the profinite completion  $\mathfrak{G} = \widehat{ \langle \varphi \rangle}$ of an infinite cyclic subgroup
 $(\ZZ, +) \simeq \langle \varphi \rangle \leq {\rm Sym} (M)$.
Consider the graph
$$
\Gamma ^{\varphi} (M) := \{ (x, \varphi (x)) \in M \times M \, \vert \, x \in M \}
$$
 of $\varphi : M \rightarrow M$, the diagonal
$$
\Delta (M) := \{ (x,x) \in M \times M \, \vert \,  x \in M \}
$$
 and the fibre
 $$
 F_2 (M) := \{ (x, x_o) \in M \times M \, \vert \, x \in M \}
 $$
  of the second canonical projection ${\rm pr} _2 : M \times M \rightarrow M$, ${\rm pr} _2 (x,y) = y$ through $x_o \in M$.
  For $D (M)= \Delta (M)$ or $D (M)= F_2 (M)$, let us introduce the intersection number
$$
(\Gamma ^{\varphi} .D) (M)     := \left| \Gamma ^{\varphi} (M) \cap D(M) \right|
$$
 as the cardinality of the set theoretic intersection of $\Gamma ^{\varphi} (M)$ with $D(M)$.

If $\xi : M \rightarrow L$ is a finite covering of locally finite $\mathfrak{G} = \widehat{ \langle \varphi \rangle}$-modules, let us  abbreviate
 $$
 ( \Gamma ^{\varphi} . \Delta) (M \setminus L) := ( \Gamma ^{\varphi} . \Delta) (M) - ( \Gamma ^{\varphi} . \Delta)(L).
 $$

\begin{definition}   \label{OrderOfCovering}
A finite covering $\xi : M \rightarrow L$ of locally finite  $\mathfrak{G} = {\rm Gal} ( \overline{\FF_q} / \FF_q)$-modules    is of order $\lambda \in \RR ^{>0}$ if there exist constants $C  \in \RR ^{>0}$, $r_o \in \NN$, such that
$$
(\Gamma ^{\Phi _q ^r} . \Delta) (M \setminus L) \leq C [ ( \Gamma ^{\Phi _q ^r} . F_2) (L) ] ^{\lambda} \quad  \mbox{  for  } \ \ \forall r \in \NN, \ \ r \geq r_o.
$$
\end{definition}

\begin{definition}    \label{HOrderOfHGaloisCover}
Let $\xi _H : N \rightarrow L$ be an $H$-Galois covering of locally finite modules over  $\mathfrak{G} = {\rm Gal} ( \overline{\FF_q} / \FF_q)$,  for some finite subgroup $H \leq Z( \mathfrak{G}) \leq {\rm Sym} (N)$ with $H \cap \mathfrak{G} = \{ {\rm Id} _N \}$ and ${\rm Stab} _H (z) = \{ {\rm Id} _N \}$ for $\forall z \in N \setminus \{ z_1, \ldots , z_l \}$.
We say that $\xi _H$ is of $H$-order $\lambda \in \RR ^{>0}$ if for $\forall h \in H$ there exist  constants $C(h) \in \RR ^{>0}$, $r(h) \in \NN$, such that
$$
( \Gamma ^{h \Phi _q ^r} . \Delta) (N \setminus L) \leq C(h) [ ( \Gamma ^{h \Phi _q ^r} . F_2) (L) ] ^{\lambda} \quad \mbox{  for  } \ \
 \forall r \in \NN, \ \ r \geq r(h).
$$
\end{definition}

Note that the function $f( \lambda) = a^{\lambda}$ of $\lambda$ is increasing for $a \geq 1$, as far as its derivative $f'( \lambda) = \ln (a) a^{\lambda} \geq 0$.
Thus, if $\xi : M \rightarrow L$ is a finite covering of order $\lambda _1 \in \RR ^{>0}$ and  $\lambda _2 > \lambda _1$, $\lambda _2 \in \RR$ then $\xi : M \rightarrow L$ is of order $\lambda _2$.
Similarly, if $\xi _H : N \rightarrow L$ is an $H$-Galois covering of $H$-order $\lambda _1 \in \RR ^{>0}$ and $\lambda _2 > \lambda _1$, $\lambda _2 \in \RR$  then $\xi _H : N \rightarrow L$ is of $H$-order $\lambda _2$.

\begin{proposition}    \label{CoveringsOfCurvesAreOfOrderHalf}
Let $X / \FF_q \subseteq \PP^n ( \overline{\FF_q})$ be a smooth irreducible projective curve.
Then:

(i) any finite covering $\xi : X_o \rightarrow L_o$ of locally finite $\mathfrak{G}_m = {\rm Gal} ( \overline{\FF_q} / \FF_{q^m})$-submodules $X_o \subseteq X$, $L_o \subseteq \PP^1 ( \overline{\FF_q})$  with $\left| X \setminus X_o \right| < \infty$, $\left| \PP^1 ( \overline{\FF_q}) \setminus L_o \right| < \infty$,
 $m \in \NN$  is of order $\frac{1}{2}$;

(ii)  any $H$-Galois covering $\xi _H : X_o \rightarrow L_o$ of locally finite $\mathfrak{G}_m = {\rm Gal} ( \overline{\FF_q} / \FF_{q^m})$-submodules $X_o \subseteq X$, $L_o \subseteq \PP^1 ( \overline{\FF_q})$  with  $\left| X \setminus X_o \right| < \infty$,
$\left| \PP^1 ( \overline{\FF_q}) \setminus L_o \right| < \infty$, $m \in \NN$
 is of $H$-order $\frac{1}{2}$.
\end{proposition}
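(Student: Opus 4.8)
\noindent\emph{Proof plan.} The plan is to reduce both assertions, via the defining inequalities of Definitions \ref{OrderOfCovering} and \ref{HOrderOfHGaloisCover}, to the Hasse--Weil bound for $X$ and for its twisted forms. Since replacing $\FF_q$ by $\FF_{q^m}$ alters neither the genus $g$ of $X$, nor $\PP^1(\overline{\FF_q})$, nor the exponent $\tfrac12$, one may assume $m=1$, so that $X_o \subseteq X$ and $L_o \subseteq \PP^1(\overline{\FF_q})$ are $\mathfrak{G}$-submodules with finite complements; moreover, in case (ii) the $H$-Galois covering $\xi_H$ is, by Proposition \ref{NSCGaloisCoveringOfCurves} applied to the morphism of curves underlying it, the restriction of a finite separable surjective morphism $X \to \PP^1(\overline{\FF_q})$ with $\mathrm{Gal}(\overline{\FF_q}(X)/\overline{\FF_q}(\PP^1)) = H$, whence each $h \in H$ is an automorphism of the curve $X$ over $\FF_q$ commuting with $\Phi_q$. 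Put $\sigma := \Phi_q^r$ in (i) and $\sigma := h\Phi_q^r$, $h \in H$, $r \in \NN$, in (ii). As $\Gamma^{\sigma}(X_o) \cap \Delta(X_o)$ consists of the $\sigma$-fixed points lying in $X_o$ and $|X \setminus X_o| < \infty$, we get $(\Gamma^{\sigma}.\Delta)(X_o) = |\{x \in X : \sigma(x)=x\}| + O(1)$; since $\xi_H(h(x)) = \xi_H(x)$ while $\xi_H$ is $\mathfrak{G}$-equivariant, $\sigma$ and $\Phi_q^r$ induce the same map on $L_o = X_o/H$ (compare Proposition \ref{TiwstedProfiniteAction}), so $(\Gamma^{\sigma}.\Delta)(L_o) = |\PP^1(\FF_{q^r})| + O(1) = q^r + 1 + O(1)$. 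Hence $(\Gamma^{\sigma}.\Delta)(X_o \setminus L_o) = \bigl(\,|\{x \in X : \sigma(x)=x\}| - (q^r+1)\,\bigr) + O(1)$ with the $O(1)$ bounded by $|X \setminus X_o| + |\PP^1(\overline{\FF_q}) \setminus L_o|$, uniformly in $r$.

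The next step bounds the main term by $2g\,q^{r/2}$. In (i) it equals $|X(\FF_{q^r})| - (q^r+1)$, and the Hasse--Weil theorem (cf. \cite{B}, \cite{St}) yields $\bigl|\,|X(\FF_{q^r})| - (q^r+1)\,\bigr| \le 2g\,q^{r/2}$. In (ii), because $h$ is an $\FF_q$-automorphism of $X$ commuting with $\Phi_q$, the endomorphism $h\Phi_q^r : X \to X$ is the geometric Frobenius of a twisted form of $X$ over $\FF_{q^r}$ --- a smooth projective curve of the same genus $g$ --- so the Hasse--Weil bound again controls its fixed-point count; equivalently, $h\Phi_q^r$ is a morphism of degree $q^r$ with vanishing differential, and the Castelnuovo--Severi (Hodge index) inequality on $X \times X$, applied to its graph $\Gamma$ with $\Gamma\cdot(\{\mathrm{pt}\}\times X)=1$, $\Gamma\cdot(X\times\{\mathrm{pt}\})=q^r$, $\Gamma^2 = q^r(2-2g)$ and $\Delta^2 = 2-2g$, gives the same estimate. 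In either case $(\Gamma^{\sigma}.\Delta)(X_o \setminus L_o) \le 2g\,q^{r/2} + O(1)$ with $r$-independent $O(1)$.

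It remains to compare with the right-hand side. On $L_o$ the self-map $\sigma$ acts as $\Phi_q^r$, so $(\Gamma^{\sigma}.F_2)(L_o) = (\Gamma^{\Phi_q^r}.F_2)(L_o)$, a quantity of order $q^r$ --- it differs only by the finitely many removed points from the one attached to $\PP^1(\overline{\FF_q})$, which through the degree-$q^r$ Frobenius is comparable to $|\PP^1(\FF_{q^r})| = q^r+1$ --- hence there are $c>0$, $r_1 \in \NN$ with $\bigl[(\Gamma^{\Phi_q^r}.F_2)(L_o)\bigr]^{1/2} \ge \sqrt{c}\,q^{r/2}$ for $r \ge r_1$. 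Taking $r_o$ (resp.\ $r(h)$) $\ge r_1$ so large that the uniform $O(1)$ above is $\le q^{r/2}$, one obtains
\begin{equation*}
(\Gamma^{\sigma}.\Delta)(X_o \setminus L_o) \;\le\; (2g+1)\,q^{r/2} \;\le\; \frac{2g+1}{\sqrt{c}}\,\bigl[(\Gamma^{\sigma}.F_2)(L_o)\bigr]^{1/2} \qquad\text{for }\ r \ge r_o,
\end{equation*}
which is the order-$\tfrac12$ condition in (i) with $C := (2g+1)/\sqrt{c}$, and --- carried out for each $h \in H$ with its own constants $C(h)$, $r(h)$ --- the $H$-order-$\tfrac12$ condition in (ii). The only step I expect to require care is the fixed-point estimate for the twisted Frobenius $h\Phi_q^r$ in part (ii): it is here that one must invoke that $H$ acts by \emph{algebraic} automorphisms of $X$ (Proposition \ref{NSCGaloisCoveringOfCurves}), so that $h\Phi_q^r$ is a degree-$q^r$ morphism with zero differential and the Castelnuovo--Severi inequality --- the geometric heart of Bombieri's argument --- applies; the rest is the routine bookkeeping of the finitely many points in $X \setminus X_o$ and $\PP^1(\overline{\FF_q}) \setminus L_o$, which shift every count by a constant independent of $r$.
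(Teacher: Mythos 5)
Your proof is correct and takes essentially the same route as the paper: you reduce the order-$\tfrac12$ condition to the twisted fixed-point bound $\bigl|\,|X^{h\Phi_q^r}| - (q^r+1)\,\bigr| \le 2g\,q^{r/2}$ obtained from the Castelnuovo--Severi (Hodge index) inequality on $X \times X$ for the graph of $h\Phi_q^r$, and then absorb the finitely many removed points into an $r$-independent error term, exactly as in the paper's proof. The only stylistic difference is that you first phrase $h\Phi_q^r$ as the Frobenius of a twisted form before falling back on the same surface-geometric argument the paper uses directly.
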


\begin{proof}

It suffices to show (ii) for an arbitrary $h \in H$ and to apply  the resulting bound to the case of $h = {\rm Id} _{X_o}$, in order to derive (i).
To this end,    let $X$ be of genus $g$ and
$$
F_1 (X) := \{ (x_o, x) \in X \times X \, \vert \, x \in X \}
$$
 be a fibre of the first canonical projection    ${\rm pr} _1 : X \times X \rightarrow X$, ${\rm pr} _1 (x_1, x_2) = x_1$.
 Note that  $\Gamma ^{\varphi} (X) \subset X \times X$ intersects transversally $\Delta (X) \subset X \times X$, $F_2 (X) \subset X \times X$, so that
 $(\Gamma ^{\varphi}.\Delta) (X)$ and $(\Gamma ^{\varphi} .F_2)(X)$ for $X$ as a locally finite $\mathfrak{G}$-module   coincide with the corresponding intersection numbers of the divisors
 $\Gamma ^{\varphi} (X)$, $\Delta (X)$, $F_2(X)$.
In order to apply a consequence of the Hodge  Index Theorem on $S := X \times X$, let us recall that $\Delta (X) \subset S$ is a smooth irreducible curve of genus $g$, so that $(\Delta. \Delta) (X) = \Delta ^2 (X)   = - (2g-2)$.
Further, $(\Gamma ^{h \Phi _q } . \Delta ) (X)   = \left| X^{ h \Phi _{q^{\rm}}} \right|$, $(\Delta . F_1)  (X)   = (\Delta . F_2) (X)   =1$,
 $(\Gamma ^{h \Phi _q } . F_1) (X) = \left| \{ (x, h \Phi _q  x ) = (y_o, y) \, \vert \, x, y \in X \} \right| =1$ are immediate.
We claim that
$$
\Gamma ^{h \Phi _q^{\rm}} (X) \cap F_2(X) = \{ (x, h \Phi _{q^{\rm}} (x) = (y, y_o) \, \vert \, x, y \in X \}
$$
 is of cardinality $(\Gamma ^{h \Phi _{q^{\rm}}} . F_2) (X) = q $.
More precisely, $h \Phi _q  (x) = y_o$ is equivalent to $\Phi _q  (x) = h^{-1} (y_o)$.
The Frobenius automorphism $\Phi _q $ preserves the standard affine open subsets
$U_i = \{ [ z_0 : \ldots : z_n] \in \PP^n ( \overline{\FF_q}) \, \vert \, z_i \neq 0 \}$ and it suffices to count the solutions of
$\Phi _q  (x) = h^{-1} ( y_o)$ on an affine open neighborhood of $h^{-1} (y_o)$.
Let us fix a local  parameter $t$ on $X \cap U_i$ at a point, different from $h^{-1}(y_o)$, so that $t( h^{-1} (y_o)) = b \in \overline{\FF_q}^*$.
Then view $x = \sum\limits _{i=i_o} ^{\infty} x_i t^{i}$ as a Laurent series with coefficients $x_i \in \overline{\FF_q}$ and reduce the equation
$\Phi _q  (x) = h^{-1} ( y_o)$ to
$\Phi _q \left( \sum\limits _{i=i_o} ^{\infty} x_i t^{i} \right) = \left( \sum\limits _{i=i_o} ^{\infty} x_i t^{i} \right) ^q =
\left( \sum\limits _{i=i_o} ^{\infty} x_i ^q t^{qi} \right) =b.$
Comparing the coefficients of $t$, one concludes that  the solutions $x$ have $x_i =0$ for $\forall i \neq 0$ and $x_0 ^q = b$.
For any fixed $b \in \overline{\FF_q} ^*$ there are exactly $q$ solutions $x_0 \in \overline{\FF_q}$  of $x_0 ^q = b$ and $(\Gamma ^{\Phi _q} . F_2) (X) = q$.
The self-intersection number of the graph $\Gamma ^{h \Phi _q } (X)$ of $h \Phi _{q^{\rm}} : X \rightarrow X$ is computed by the means of the adjunction formula.
Namely, $\Gamma ^{h \Phi _q} (X) \subset X \times X$ is a smooth curve of genus $g$ and the canonical bundle $K_S$ of $S = X \times X$ is numerically equivalent to $(2g-2) ( [ F_1 (X) + F_2(X)]$.
Making use of the adjunction formula
$$
2g-2 = 2g ( \Gamma ^{h \Phi _q} (X)) - 2 = \Gamma ^{h \Phi _q} (X) . ( \Gamma ^{h \Phi _q} (X)  +  K_S) =
\Gamma ^{h \Phi _q} (X)  ^2 + (2g-2) (1+q),
$$
one concludes that $(\Gamma ^{h \Phi _q})^2 (X)  = - q (2g-2).$
The Hodge Index Theorem on $S = X \times X$ asserts that if a divisor $E \subset S$ has vanishing intersection number $E.H =0$ with some ample divisor $H \subset S$ then the self-intersection number $E^2 \leq 0$ of $E$ is non-positive.
For an arbitrary divisor $D \subset S$, the application of the Hodge Index Theorem to the divisors
 $E := D -   [(D.F_2) (X)]  F_1 (X)  -  [(D.F_1)(X) ] F_2(X)  $ and  $H := F_1 (X) + F_2(X)$ yields
\begin{equation}    \label{DivisorFormula}
D^2 \leq 2 [(D.F_1)(X)][(D.F_2)(X)],
\end{equation}
making use of $(F_1)^2 (X) = (F_2)^2 (X) =0$, $(F_1.F_2)(X)=1$ (cf.Proposition 3.9 from \cite{ZetaBook}).
If $D = a \Delta (X) + b \Gamma ^{h \Phi _ q  ^{rm}} (X)$ for some $a, b \in \ZZ$, $b \neq 0$, then (\ref{DivisorFormula}) reads as
$$
f \left( \frac{a}{b} \right) = g \left( \frac{a}{b} \right) ^2 + \left[ q^{rm} +1 - \left| X^{h \Phi _q^{rm}} \right| \right] \left( \frac{a}{b} \right) + q^{rm} g \geq 0 \ \ \mbox{  for } \ \ \forall \frac{a}{b} \in \QQ.
$$
By continuity, the quadratic polynomial
$$
f(z) = g z^2 +  \left[ q^{rm} +1 - \left| X^{h \Phi _q^{rm}} \right| \right] z + q^{rm} g \geq 0
$$
 takes non-negative values for $\forall z \in \RR$.
Therefore, the discriminant
$$
D(f) =  \left[  q^{rm} +1 - \left| X ^{h \Phi _q^{rm}} \right|  \right]  ^2 - 4 q ^{rm} g^2 \leq 0
$$
or
$
\left| \left| X^{h \Phi _q^{rm}} \right|  - (q^{rm} +1) \right| \leq 2 g q^{\frac{rm}{2}}.
$
In particular,
\begin{equation}    \label{ClosedOrderHalf}
\left| X^{h \Phi _q^{rm}} \right| \leq q^{rm} +1 + 2g q^{\frac{rm}{2}} \ \ \mbox{  for } \ \ \forall r \in \NN.
\end{equation}
We  are going to show that
\begin{equation}    \label{OpenOrderHalf}
(\Gamma ^{h \Phi _q^{rm}} . \Delta ) (X_o \setminus L_o) \leq (2 \sqrt{2} g+1) \left[ ( \Gamma ^{h \Phi _q^{rm}} . F_2) (L_o) \right] ^{\frac{1}{2}},
\end{equation}
in order to conclude the proof of the proposition.
To this end, note that the intersection number
 $( \Gamma ^{h \Phi _q^{rm}} . \Delta ) (X_o) = \left| X_o ^{h \Phi _q^{rm}} \right| \leq \left| X^{h \Phi _q^{rm}}  \right|.$
Representing the projective line as a disjoint union  $\PP^1 ( \overline{\FF_q}) = L_o \coprod \{ x_1, \ldots , x_{\alpha} \}$,   one observes that
$\PP^1 ( \FF_{q^{rm}} ) = \PP^1 ( \overline{\FF_q}) ^{\Phi _q^{rm}} =
L_o ^{\Phi _q^{rm}} \coprod \{ x_1, \ldots , x_{\alpha} \} ^{\Phi _q^{rm}} $  and concludes that
$$
(\Gamma ^{h \Phi _q^{rm}} . \Delta) (L_o) = (\Gamma ^{ \Phi _q^{rm}} . \Delta) (L_o) =
 \left| L_o ^{\Phi _q^{rm}} \right| \geq \left| \PP^1 ( \FF _{q^{rm}}) \right| - \alpha = q^{rm} +1 - \alpha.
 $$
 Further,
\begin{equation}   \label{LowerBoundGammaF2}
\begin{split}
  ( \Gamma ^{h \Phi _q^{rm}} . F_2) (L_o) =(\Gamma ^{\Phi _q^{rm}} . F_2) ( \PP^1 ( \overline{\FF_q}) \setminus \{ x_1, \ldots , x_{\alpha} \})= \\
 (\Gamma ^{\Phi _q^{rm}} . F_2) ( \PP^1 ( \overline{\FF_q}) ) - (\Gamma ^{\Phi _q^{rm}} . F_2) ( \{ x_1, \ldots , x_{\alpha} \})  \geq q^{rm} - \alpha
\end{split}
\end{equation}
 by $ (\Gamma ^{\Phi _q^{rm}} . F_2) ( \PP^1 ( \overline{\FF_q}) ) = q^{rm}$  for the smooth irreducible projective curve $\PP^1 ( \overline{\FF_q})$.
 For sufficiently large $r \in \NN$, $r \geq r_o$ one has $\alpha < \frac{1}{2} q^{\frac{rm}{2}}  < \frac{1}{2} q^{rm}$.
  Combining with   (\ref{ClosedOrderHalf}), one  derives that
\begin{align*}
 (\Gamma ^{h \Phi _q^{rm} } . \Delta) (L_o) + (2 \sqrt{2} g +1) [( \Gamma ^{h \Phi _q^{rm}} . F_2) (L_o) ] ^{\frac{1}{2}} \geq  \\
  q^{rm} +1 - \alpha + (2 \sqrt{2} g +1) (q^{rm} - \alpha) ^{\frac{1}{2}} >
  q^{rm} +1 - \frac{1}{2} q^{\frac{rm}{2}} + (2 \sqrt{2} g +1)(q^{rm} - \frac{1}{2} q^{rm}) ^{\frac{1}{2}} =  \\
 q^{rm} +1 + \left( 2g +  \frac{  \sqrt{2}-1}{2} \right) q^{\frac{rm}{2}} >
  q^{rm} +1 + 2g q^{\frac{rm}{2}} \geq \left| X^{h \Phi _q^{rm}} \right| \geq  \left| X_o^{h \Phi _q^{rm}} \right|.
\end{align*}

\end{proof}

\begin{corollary}    \label{CoveringsOf P1HaveOrderHalf}
(i) If $L$ is a $\mathfrak{G} = {\rm Gal} ( \overline{\FF_q} / \FF_q)$-submodule of $\PP^1 ( \overline{\FF_q})$ with at most finite complement
 $\PP^1 ( \overline{\FF_q}) \setminus L$ then any finite covering $\xi : M \rightarrow L$ of locally finite $\mathfrak{G}$-modules is of order $\lambda \geq 1$.

(ii)  If $L$ is a $\mathfrak{G} = {\rm Gal} ( \overline{\FF_q} / \FF_q)$-submodule of $\PP^1 ( \overline{\FF_q})$ with
 $\left| \PP^1 ( \overline{\FF_q}) \setminus L \right| < \infty$ then any finite $H$-Galois covering $\xi _H : N \rightarrow L$ of locally finite $\mathfrak{G}$-modules is of $H$-order $\lambda \geq 1$.
\end{corollary}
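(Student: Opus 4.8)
The plan is to deduce the Corollary from the identity $(\Gamma^{\Phi_q^r}.\Delta)(M)=|M^{\Phi_q^r}|$ together with the two linear--in--$q^r$ estimates for $\PP^1(\overline{\FF_q})$ already used inside the proof of Proposition \ref{CoveringsOfCurvesAreOfOrderHalf}; there is no substitute for the Hodge Index Theorem available for a general locally finite module, and this is exactly why only the exponent $\lambda\geq 1$ (and not $\tfrac12$) can be claimed here.

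\emph{Part (i).} Let $k:=\deg\xi$ and $\alpha:=|\PP^1(\overline{\FF_q})\setminus L|<\infty$. First I would observe that $\Gamma^{\Phi_q^r}(M)\cap\Delta(M)$ is in bijection with $M^{\Phi_q^r}$, so $(\Gamma^{\Phi_q^r}.\Delta)(M\setminus L)=|M^{\Phi_q^r}|-|L^{\Phi_q^r}|$. By $\mathfrak{G}$-equivariance every $y\in M^{\Phi_q^r}$ has $\xi(y)\in L^{\Phi_q^r}$, and every fibre of $\xi$ has at most $k$ elements, so $|M^{\Phi_q^r}|\leq k\,|L^{\Phi_q^r}|$; since $|L^{\Phi_q^r}|\leq|\PP^1(\FF_{q^r})|=q^r+1$, this gives $(\Gamma^{\Phi_q^r}.\Delta)(M\setminus L)\leq(k-1)(q^r+1)$. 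On the other side, exactly as in \eqref{LowerBoundGammaF2}, one has $(\Gamma^{\Phi_q^r}.F_2)(L)\geq(\Gamma^{\Phi_q^r}.F_2)(\PP^1(\overline{\FF_q}))-\alpha=q^r-\alpha$. Choosing $r_o$ with $\alpha\leq\tfrac12 q^{r_o}$, for all $r\geq r_o$ we obtain $q^r+1\leq 2q^r\leq 4(q^r-\alpha)$, hence $(\Gamma^{\Phi_q^r}.\Delta)(M\setminus L)\leq 4k\,(\Gamma^{\Phi_q^r}.F_2)(L)$. Thus $\xi$ is of order $1$, and by the monotonicity of $\lambda\mapsto a^\lambda$ noted just before Proposition \ref{CoveringsOfCurvesAreOfOrderHalf}, $\xi$ is of order $\lambda$ for every $\lambda\geq 1$.

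\emph{Part (ii).} Here $\deg\xi_H=|H|$ by Corollary \ref{InertiaIndicesOfGaloisCoverings}. Fix $h\in H$ and $r\in\NN$ and put $\varphi:=h\Phi_q^r$. By Proposition \ref{TiwstedProfiniteAction}, $N$ is a locally finite $\mathfrak{G}(\varphi)$-module, $\xi_H$ is a finite covering of $\mathfrak{G}(\varphi)$-modules, and $\varphi$ acts on $L=N/H$ through the projection ${\rm pr}_2$, i.e.\ as $\Phi_q^r$; consequently $\Gamma^{h\Phi_q^r}(L)=\Gamma^{\Phi_q^r}(L)$ and the estimates for $L$ from Part (i) carry over verbatim. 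The same fibre count, with $|H|$ in place of $k$, gives $(\Gamma^{h\Phi_q^r}.\Delta)(N\setminus L)=|N^{h\Phi_q^r}|-|L^{\Phi_q^r}|\leq(|H|-1)(q^r+1)$, while $(\Gamma^{h\Phi_q^r}.F_2)(L)=(\Gamma^{\Phi_q^r}.F_2)(L)\geq q^r-\alpha$. Since $\alpha$ and $|H|$ do not depend on $h$, one may take $C(h)=4|H|$ and $r(h)=r_o$ uniformly in $h$, so $\xi_H$ is of $H$-order $1$, hence of $H$-order $\lambda$ for every $\lambda\geq 1$.

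The one point that needs care is the interpretation of $\Gamma^{h\Phi_q^r}(L)$, $(\Gamma^{h\Phi_q^r}.\Delta)(L)$ and $(\Gamma^{h\Phi_q^r}.F_2)(L)$ on the quotient $L=N/H$: although $h$ genuinely permutes $N$, on $L$ the self-map $h\Phi_q^r$ coincides with $\Phi_q^r$, which is precisely what Proposition \ref{TiwstedProfiniteAction}(ii) provides, and after this identification the argument is bookkeeping with the two estimates above. I do not expect a real obstacle, and that is itself the content of the statement: for an arbitrary locally finite $M$ one has no geometric input beyond $\deg\xi$, so the sharper exponent $\tfrac12$ of Proposition \ref{CoveringsOfCurvesAreOfOrderHalf} need not hold.
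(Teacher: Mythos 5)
Your proof is correct and follows essentially the same route as the paper: bound $(\Gamma^{h\Phi_q^r}.\Delta)(N\setminus L)$ from above via the fibre count $|N^{h\Phi_q^r}| \leq |\xi_H^{-1}(L^{\Phi_q^r})| \leq |H|\,|L^{\Phi_q^r}| \leq |H|(q^r+1)$, bound $(\Gamma^{h\Phi_q^r}.F_2)(L)$ from below via $q^r-\alpha \geq \tfrac12 q^r$ for $r\geq r_o$ using (\ref{LowerBoundGammaF2}), and combine. The only difference is that the paper writes out only the $H$-Galois case (part (ii)) and leaves part (i) as the analogous argument with $\deg\xi$ in place of $|H|$; you make both cases explicit, which is fine, and your constant $4|H|$ is a hair looser than the paper's $4(|H|-1)$ but still establishes order $1$ and hence order $\lambda$ for every $\lambda\geq 1$.
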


\begin{proof}

It suffices to show that for $\forall h \in H$ there exist constants $C \in \RR ^{>0}$, $r_o \in \NN$, such that
$$
(\Gamma ^{h \Phi _{q^r}}. \Delta) (N \setminus L) \leq C [ (\Gamma ^{h \Phi _q^r}. F_2) (L)] \ \ \mbox{  for } \ \ \forall r \in \NN, \ \ r \geq r_o.
$$
To this end, assume that $\left| \PP^1 ( \overline{\FF_q}) \setminus L \right| = \alpha$ and  note  that $(\Gamma ^{h \Phi _q^r} .F_2) (L) \geq q^r - \alpha$ by (\ref{LowerBoundGammaF2}) from the proof of Proposition \ref{CoveringsOfCurvesAreOfOrderHalf}.
For a sufficiently large $r \in \NN$, $r \geq r_o$ one has $\alpha  \leq  \frac{1}{2} q^r$, whereas $(\Gamma ^{h \Phi _q^r} .F_2) (L) \geq \frac{1}{2} q^r$.
On the other hand, $N^{h \Phi _q^r} \subseteq \xi _H ^{-1} (L^{\Phi _q^r})$, as far as for $\forall z \in N$ with $h \Phi _q ^r (z) = z$ there holds $\Phi _q^r \xi _H (z) = \xi _H ( \Phi _q^r z) = \xi _H (h \Phi _q^r z) = \xi _H (z)$.
Therefore $\left| N^{h \Phi _q ^r} \right| \leq \left| \xi _H ^{-1} ( L^{\Phi _q ^r} ) \right| \leq |H| \left| L ^{\Phi _q ^r} \right|$ and
\begin{align*}
(\Gamma ^{h \Phi _q^r} . \Delta)(N \setminus L) =
 \left| N ^{h \Phi _q^r} \right| - \left| L ^{\Phi _q^r} \right| \leq
|H| \left| L ^{\Phi _q^r} \right| - \left| L ^{\Phi _q^r} \right|  =
(|H|-1) \left| L ^{\Phi _q^r} \right| \leq  \\
 (|H|-1) \left| \PP^1 ( \overline{\FF_q}) ^{\Phi _q^r} \right| =
 (|H| -1) (q^r+1) <
  2 (|H|-1) q^r =  \\
   4 (|H| -1) \frac{q^r}{2} \leq
  4 ( |H| -1) (\Gamma ^{h \Phi _q ^r} . F_2) (L).
\end{align*}

\end{proof}

The following simple lemma is crucial for the proof of  the  main Theorem \ref{MainTheorem}.

\begin{lemma}    \label{FixedPointsCountByGaloisCover}
Let $\xi _H : N \rightarrow L$ be an $H$-Galois covering of infinite locally finite modules over $\mathfrak{G} = {\rm Gal} ( \overline{\FF_q} / \FF_q)$ for some finite subgroup $H \leq Z( \mathfrak{G})$ with $H \cap \mathfrak{G} = \{ {\rm Id} _N \}$ and '${\rm Stab} _H (y) = \{ {\rm Id} _N \}$ for $\forall y \in N \setminus \{ y_1, \ldots , y_l \}$.
Then
$$
\sum\limits _{h \in H}  \left|  N ^{h \Phi _q} \right| = \left| H \right| \left| L^{\Phi _q} \right|.
$$
\end{lemma}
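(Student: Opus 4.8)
The plan is to evaluate the cardinality of the set
$$
S := \left\{ (h,z) \in H \times N \, \vert \, h \Phi _q (z) = z \right\}
$$
in two ways. Partitioning the pairs of $S$ according to their first coordinate gives at once $\left| S \right| = \sum_{h \in H} \left| N^{h \Phi _q} \right|$, which is the left hand side of the claimed identity. Note that each summand is a finite number: since $H$ centralizes $\mathfrak{G}$ and $h^{\left| H \right|} = {\rm Id} _N$, one has $(h \Phi _q)^{\left| H \right|} = \Phi _q ^{\left| H \right|}$, so $N^{h \Phi _q} \subseteq N^{\Phi _q ^{\left| H \right|}}$, the latter set being finite by the local finiteness of $N$; likewise $L^{\Phi _q}$ is finite.

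For the second evaluation I would partition $S$ by the value of the map $\pi : S \to L$, $\pi (h,z) := \xi _H (z)$. First one checks that $\pi$ takes its values in $L^{\Phi _q}$: if $h \Phi _q (z) = z$, then $h \Phi _q (z)$ and $\Phi _q (z)$ lie in the same fibre of $\xi _H$, which is an $H$-orbit, so by the $\mathfrak{G}$-equivariance of $\xi _H$ (Lemma-Definition \ref{GaloisCover})
$$
\xi _H (z) = \xi _H ( h \Phi _q (z)) = \xi _H ( \Phi _q (z)) = \Phi _q ( \xi _H (z)),
$$
i.e.\ $\xi _H (z) \in L^{\Phi _q}$. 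It remains to show that each fibre $\pi ^{-1}(x)$, $x \in L^{\Phi _q}$, has exactly $\left| H \right|$ elements. Fix such an $x$ and put $F := \xi _H ^{-1}(x)$; by the definition of $L = N/H$ this is a single $H$-orbit, and since $x$ is $\Phi _q$-fixed and $\xi _H$ is $\mathfrak{G}$-equivariant, $\Phi _q$ maps $F$ into itself. For a fixed $z \in F$ write $\Phi _q (z) = h_z z$ with $h_z \in H$ (possible because $\Phi _q (z) \in F = Hz$); then $h \Phi _q (z) = z$ holds precisely when $h h_z \in {\rm Stab} _H (z)$, and as $h \mapsto h h_z$ is a bijection of $H$ there are exactly $\left| {\rm Stab} _H (z) \right|$ such $h$. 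Summing over $z \in F$ and using that the point stabilizers inside the single $H$-orbit $F$ are mutually conjugate, hence all of order $\left| {\rm Stab} _H (z) \right| = \left| H \right| / \left| F \right|$, one obtains
$$
\left| \pi ^{-1}(x) \right| = \sum_{z \in F} \left| {\rm Stab} _H (z) \right| = \left| F \right| \cdot \frac{\left| H \right|}{\left| F \right|} = \left| H \right|.
$$
Hence $\left| S \right| = \sum_{x \in L^{\Phi _q}} \left| H \right| = \left| H \right| \left| L^{\Phi _q} \right|$, which together with the first evaluation proves the lemma.

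The step I expect to require the most care is this last fibre computation at the finitely many branch points of $\xi _H$, where $H$ fails to act freely on $F$: there the naive expectation ``one pair $(h,z)$ for each $z \in F$, hence $\left| F \right| = \left| H \right|$ pairs'' breaks down, but the orbit--stabilizer relation $\left| F \right| \cdot \left| {\rm Stab} _H (z) \right| = \left| H \right|$ restores the count to exactly $\left| H \right|$, so no correction terms enter and the identity holds globally.
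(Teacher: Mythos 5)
Your proposal is correct and takes essentially the same route as the paper: both are double-counting arguments for the set of pairs $(h,z)$ with $h\Phi_q(z)=z$, showing that each $x \in L^{\Phi_q}$ contributes exactly $|H|$ to the sum because each $z \in \xi_H^{-1}(x)$ is counted $|{\rm Stab}_H(z)|$ times and $\sum_{z}|{\rm Stab}_H(z)| = |H|$ by orbit--stabilizer. Your version is in fact a touch more explicit in checking that only points over $L^{\Phi_q}$ contribute, which the paper's proof leaves implicit.
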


\begin{proof}

Let $x \in L^{\Phi _q}$, $y \in \xi _H ^{-1} (x)$, $\varepsilon (y) = \left|  {\rm Stab} _H (y) \right|$ and $\nu (y)$ be the number of times $y$ is counted in $\sum\limits _{h \in H} \left| N ^{ h \Phi_q} \right|.$
Then $x$ has $\left| \xi _H ^{-1} (x) \right| = \left| {\rm Orb} _H (y) \right| = [H : {\rm Stab} _H (y)] = \frac{|H|}{\varepsilon (y)}$ pre-images in $N$.
 It suffices to show that $\varepsilon (y) = \nu (y)$ for $\forall y \in \xi _H^{-1}(x)$, in order to conclude that the contribution of $x$ to $\sum\limits _{h \in H} \left| N ^{ h \Phi_q} \right|$ equals $|H|$ and to prove the lemma.
To this end,   note   that $y \in N ^{h \Phi _q}$ exactly when $h \Phi _q = \Phi _q h \in {\rm Stab} _{\mathfrak{G} \times H} (y) \cap \Phi _q H$.
 We claim that ${\rm Stab} _{\mathfrak{G} \times H} (y) \cap \Phi _q H = \Phi _q h_o {\rm Stab} _H (y)$ for some $h_o \in H$ with $h_o \Phi _q (y) = y$.
The inclusion ${\rm Stab} _{\mathfrak{G} \times H} (y) \cap \Phi _q H \supseteq \Phi _q h_o {\rm Stab} _H (y)$ is clear.
  For the opposite inclusion, if $\Phi _q h_1 \in {\rm Stab} _{\mathfrak{G} \times H} (y) \cap \Phi _q H$ then $h_o ^{-1} h_1 =(\Phi _q h_o)^{-1}(\Phi _qh_1) \in {\rm Stab} _{\mathfrak{G} \times H} (y) \cap H = {\rm Stab} _H (y)$, whereas $\Phi _q h_1 = \Phi _q h_o (h_o ^{-1} h_1) \in \Phi _q h_o {\rm Stab} _H (y)$ and ${\rm Stab} _{\mathfrak{G} \times H} (y) \cap \Phi _q H \subseteq \Phi _q h_o {\rm Stab} _H (y)$.
That justifies ${\rm Stab} _{\mathfrak{G} \times H} (y) \cap \Phi _q H = \Phi _q h_o {\rm Stab} _H (y)$ and implies that
$$
\nu (y) = \left| {\rm Stab} _{\mathfrak{G} \times H} (y) \cap \Phi _q H \right| = \left| \Phi _q h_o {\rm Stab} _H (y) \right| =
 \left| {\rm Stab} _H (y) \right| = \varepsilon (y).
$$

\end{proof}

Here is the main result of the article.

\begin{theorem}     \label{MainTheorem}
Let $M$ be an infinite locally finite module over $\mathfrak{G} = {\rm Gal} ( \overline{\FF_q} / \FF_q)$ with a polynomial $\zeta$-quotient
$P_M(t) = \frac{\zeta _M(t)}{\zeta _{\PP^1 ( \overline{\FF_q})} (t)} = \sum\limits _{j=0} ^d a_j t^{j} \in \ZZ[t]$ of $\deg P_M(t) = d \in \NN$ with  leading coefficient ${\rm LC} ( P_M(t)) = a_d \in \ZZ \setminus \{ 0, \pm 1 \}$ and $\lambda := \log _q \sqrt[d]{\left| a_d \right|} \in \RR ^{>0}$.
Suppose that there exist $m \in \NN$ and $\mathfrak{G}_m = {\rm Gal} ( \overline{\FF_q} / \FF_{q^m})$-submodules $M_o \subseteq M$, $L_o \subseteq \PP^1 ( \overline{\FF_q})$ with $\left| M \setminus M_o \right| < \infty$, $\left| \PP^1 ( \overline{\FF_q}) \setminus L_o \right| < \infty$, which admit a finite covering $\xi : M_o \rightarrow L_o$ of $\mathfrak{G}_m$-modules with a Galois closure $\xi _H : N_o \rightarrow L_o$.

(i) If $\lambda \geq 1$ then $M$ satisfies the Riemann Hypothesis Analogue with respect to the projective line $\PP^1 ( \overline{\FF_q})$ as a $\mathfrak{G}$-module.

(ii) If $\lambda < 1$, $\xi $ is of order $\lambda$ and $\xi _H$ is of $H$-order $\lambda$ then $M$ satisfies the Riemann Hypothesis  Analogue with respect to the projective line $\PP^1 ( \overline{\FF_q})$ as a $\mathfrak{G}$-module.
\end{theorem}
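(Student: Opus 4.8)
The plan is to reduce, via Proposition~\ref{RHAReductionToBounds}, the Riemann Hypothesis Analogue for $M$ to two-sided bounds on the number of $\Phi_{q^{mr}}$-rational points of $M$, and then to establish those bounds by comparing $M$ with $\PP^1(\overline{\FF_q})$ through the covering $\xi$ and its Galois closure $\xi_H$. First I would enlarge $m$ to a common multiple if necessary — harmless by Lemma~\ref{OrbStab}, and it does not affect the order hypotheses, which pass to multiples of $m$ — so that $M_o$, $L_o$ and the entire Galois-closure diagram of Definition~\ref{GaloisClosure} are $\mathfrak{G}_m$-equivariant, with $H_1\le H\le Z(\mathfrak{G}_m)\le{\rm Sym}(N_o)$, $M_o=N_o/H_1$ and $L_o=N_o/H$. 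Next I would merge the two cases: since $L_o$ is a $\mathfrak{G}_m$-submodule of $\PP^1(\overline{\FF_q})=\PP^1(\overline{\FF_{q^m}})$ with finite complement, Corollary~\ref{CoveringsOf P1HaveOrderHalf}, read over $\FF_{q^m}$, together with the monotonicity of $\mu\mapsto a^{\mu}$ recorded just before Proposition~\ref{CoveringsOfCurvesAreOfOrderHalf}, shows that $\xi$ is of order $\lambda$ and $\xi_H$ of $H$-order $\lambda$ whenever $\lambda\ge 1$. Hence in both (i) and (ii) I may assume $\xi$ has order $\lambda$ and $\xi_H$ has $H$-order $\lambda$, and by Proposition~\ref{RHAReductionToBounds}(iii) (with $\nu=m$) it remains to show that $q^{mr}+1-C_2q^{\lambda mr}\le|M^{\Phi_{q^{mr}}}|\le q^{mr}+1+C_1q^{\lambda mr}$ for all large $r$.

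For the upper bound I would start from $|M^{\Phi_{q^{mr}}}|\le|M_o^{\Phi_{q^{mr}}}|+|M\setminus M_o|$. Writing $(\Gamma^{\Phi_{q^{mr}}}.\Delta)(M_o\setminus L_o)=|M_o^{\Phi_{q^{mr}}}|-|L_o^{\Phi_{q^{mr}}}|$ and using the order-$\lambda$ estimate for $\xi$ together with $(\Gamma^{\Phi_{q^{mr}}}.F_2)(L_o)\le(\Gamma^{\Phi_{q^{mr}}}.F_2)(\PP^1(\overline{\FF_{q^m}}))=q^{mr}$ (the computation from the proof of Proposition~\ref{CoveringsOfCurvesAreOfOrderHalf}), I get $|M_o^{\Phi_{q^{mr}}}|\le|L_o^{\Phi_{q^{mr}}}|+Cq^{\lambda mr}$ for large $r$. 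Since $|L_o^{\Phi_{q^{mr}}}|\le|\PP^1(\overline{\FF_q})^{\Phi_{q^{mr}}}|=q^{mr}+1$ and $\lambda>0$ forces $|M\setminus M_o|\le q^{\lambda mr}$ eventually, the upper bound follows with, say, $C_1=C+1$.

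The hard part will be the lower bound, since one cannot simply assert $|M_o^{\Phi_{q^{mr}}}|\ge|L_o^{\Phi_{q^{mr}}}|$: a $\Phi_{q^{mr}}$-fixed point of $L_o$ need not be covered by a $\Phi_{q^{mr}}$-fixed point of $M_o$, and this is precisely where the Galois closure is needed. Fixing $r$, I would apply Lemma~\ref{FixedPointsCountByGaloisCover} over $\mathfrak{G}_{mr}=\widehat{\langle\Phi_{q^{mr}}\rangle}$ — legitimate because $H\le Z(\mathfrak{G}_m)\le Z(\mathfrak{G}_{mr})$, $H\cap\mathfrak{G}_{mr}\subseteq H\cap\mathfrak{G}_m=\{{\rm Id}_{N_o}\}$, and ${\rm Stab}_H$ is trivial off a finite set — to both the $H$-Galois covering $\xi_H\colon N_o\to N_o/H=L_o$ and the $H_1$-Galois covering $\xi_{H_1}\colon N_o\to N_o/H_1=M_o$ (Lemma-Definition~\ref{GaloisCover}), obtaining $\sum_{h\in H}|N_o^{h\Phi_{q^{mr}}}|=|H|\,|L_o^{\Phi_{q^{mr}}}|$ and $\sum_{h\in H_1}|N_o^{h\Phi_{q^{mr}}}|=|H_1|\,|M_o^{\Phi_{q^{mr}}}|$. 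Subtracting, $|H_1|\,|M_o^{\Phi_{q^{mr}}}|=|H|\,|L_o^{\Phi_{q^{mr}}}|-\sum_{h\in H\setminus H_1}|N_o^{h\Phi_{q^{mr}}}|$, and for each $h\in H\setminus H_1$ the $H$-order-$\lambda$ hypothesis for $\xi_H$ — via $(\Gamma^{h\Phi_{q^{mr}}}.\Delta)(N_o\setminus L_o)=|N_o^{h\Phi_{q^{mr}}}|-|L_o^{\Phi_{q^{mr}}}|$ and $(\Gamma^{h\Phi_{q^{mr}}}.F_2)(L_o)\le q^{mr}$ — gives $|N_o^{h\Phi_{q^{mr}}}|\le|L_o^{\Phi_{q^{mr}}}|+C(h)q^{\lambda mr}$ for large $r$. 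Feeding this in and dividing by $|H_1|$ yields $|M_o^{\Phi_{q^{mr}}}|\ge|L_o^{\Phi_{q^{mr}}}|-(C'/|H_1|)q^{\lambda mr}$ with $C'=\sum_{h\in H\setminus H_1}C(h)$; combined with $|L_o^{\Phi_{q^{mr}}}|\ge q^{mr}+1-\alpha$ where $\alpha=|\PP^1(\overline{\FF_q})\setminus L_o|<\infty$, with $|M^{\Phi_{q^{mr}}}|\ge|M_o^{\Phi_{q^{mr}}}|$, and with $\alpha\le q^{\lambda mr}$ eventually, the lower bound follows with $C_2=C'/|H_1|+1$. Then Proposition~\ref{RHAReductionToBounds}, implication $(iii)\Rightarrow(i)$, finishes the proof. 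The genuine subtlety is the averaging step for the lower bound and the control of the off-diagonal counts $|N_o^{h\Phi_{q^{mr}}}|$, $h\ne{\rm Id}_{N_o}$, by the $H$-order; the upper bound and the merging of the two cases are routine once the order hypotheses are secured.
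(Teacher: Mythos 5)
Your proposal follows the paper's proof essentially step for step: merge case (i) into case (ii) via Corollary~\ref{CoveringsOf P1HaveOrderHalf} and the monotonicity of the order notion in $\lambda$, obtain the upper bound on $|M^{\Phi_{q^{mr}}}|$ from the order-$\lambda$ hypothesis on $\xi$ plus $|L_o^{\Phi_{q^{mr}}}|\le q^{mr}+1$, and obtain the lower bound by averaging over $H$ and $H_1$ via Lemma~\ref{FixedPointsCountByGaloisCover} and then controlling each off-identity term $|N_o^{h\Phi_{q^{mr}}}|$ through the $H$-order-$\lambda$ hypothesis on $\xi_H$, finishing with Proposition~\ref{RHAReductionToBounds}(iii)$\Rightarrow$(i). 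The only cosmetic difference is that you enlarge $m$ at the outset so the whole Galois-closure diagram is $\mathfrak{G}_m$-equivariant and take $\nu=m$, whereas the paper keeps the original $m$ for the upper-bound step and passes to a separate multiple $\nu\in m\NN$ for the lower-bound step (adjusting $r_1$ accordingly) — the two bookkeeping choices are equivalent, and your check that the order hypotheses persist under enlarging $m$ and that Lemma~\ref{FixedPointsCountByGaloisCover} applies over $\mathfrak{G}_{mr}$ is correct.
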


\begin{proof}

According to Corollary \ref{CoveringsOf P1HaveOrderHalf}, an arbitrary finite covering  $\xi: M_o \rightarrow L_o$ is of order $\geq 1$ and an arbitrary finite $H$-Galois covering $\xi _H : N_o \rightarrow L_o$ is of $H$-order $\geq 1$.
That is why, it suffices to show that the existence of a finite covering $\xi : M_o \rightarrow L_o$ of $\mathfrak{G}_m$-modules of order $\lambda := \log _1 \sqrt[d]{\left| a_d \right|} \in \RR ^{>0}$ with a Galois closure $\xi _H : N_o \rightarrow L_o$ of $H$-order $\lambda$ suffices for a locally  finite $\mathfrak{G}$-module $M$ with $P_M(t) = \frac{\zeta _M(t)}{\zeta _{\PP^1 ( \overline{\FF_q})} (t)} = \sum\limits _{j=0} ^d a_j t^{j} \in \ZZ[t]$ to satisfy the Riemann Hypothesis Analogue with respect to $\PP^1 ( \overline{\FF_q})$ as a $\mathfrak{G}$-module.

By assumption, there exist constants  $C_o \in \RR ^{>0}$, $r_o \in \NN$, such that
\begin{align*}
\left| M_o ^{\Phi _q ^{mr}} \right| \leq \left| L_o ^{\Phi _q ^{mr}} \right| + C_o [( \Gamma ^{\Phi _q^{mr}} . F_2) (L_o) ] ^{\lambda} \leq
\left| \PP^1 ( \overline{\FF_q}) ^{\Phi _q^{mr}} \right| + C_o [(\Gamma ^{\Phi _q ^{mr}} . F_2) ( \PP^1 ( \overline{\FF_q}))] ^{\lambda} \leq  \\
q^{mr} +1 + C_o q^{\lambda mr} \ \ \mbox{  for  } \ \ \forall r \in \NN, \, \forall r \geq r_o.
\end{align*}
If $M \setminus M_o = \{ y_1, \ldots , y_{\beta} \}$ then $\left| M ^{\Phi _q^{mr}} \right| - \left| M_o ^{\Phi _q ^{mr}} \right| = \left| (M \setminus M_o) ^{\Phi _q ^{mr}} \right| = \left| \{ y_1, \ldots , y_{\beta} \} ^{\Phi _q ^{mr}} \right| \leq \beta$, whereas $\left| M ^{\Phi _q ^{mr}} \right| \leq \left| M_o ^{\Phi _q^{mr}} \right| + \beta \leq q^{mr} +1 + C_o q^{\lambda mr} + \beta.$
There exists $\rho _o \in \NN$, $\rho _o \geq r_o$ with $\beta \leq q^{\lambda mr}$ for $\forall r \in \NN$, $r \geq \rho _o$.
Introducing $C_1 := C_o +1$, one obtains
$$
\left| M ^{\Phi _q ^{mr}} \right| \leq q^{mr} +1 + C_1 q^{\lambda mr} \ \ \mbox{  for } \ \ \forall r \in \NN, \, r \geq \rho _o.
$$
Note that Lemma \ref{FixedPointsCountByGaloisCover} provides
\begin{equation}   \label{FixedPointsCountOnLByGaloisCover}
\sum\limits _{h \in H} \left| N_o ^{h \Phi _q ^{\nu r}} \right| = |H| \left| L_o ^{\Phi _q ^{\nu r}} \right|
\end{equation}
and
\begin{equation}   \label{FixedPointsCountOnMByGaloisCover}
\sum\limits _{h \in H_1} \left| N_o ^{h \Phi _q ^{\nu r}} \right| = |H_1| \left| M_o ^{\Phi _q ^{\nu r}} \right| \ \ \mbox{  for } \ \ \forall r \in \NN
\end{equation}
and some multiple $\nu \in m \NN$ of $m$, for which
$$
\begin{diagram}
\node{N_o}  \arrow{se,r}{\xi _H} \arrow{e,t}{\xi _{H_1}} \node{M_o}  \arrow{s,r}{\xi}  \\
\node{\mbox{  }} \node{L_o}
\end{diagram}
$$
is a commutative diagram of finite coverings of $\mathfrak{G}_{\nu} = {\rm Gal} ( \overline{\FF_q} / \FF_{q^{\nu}} )$-modules.
 If   $r_1 \in \NN$, $r_1 \geq \frac{\nu }{m} \rho _o$, then upper bound $\left| M^{\Phi _q ^{\nu r}} \right| \leq q^{\nu r} +1 + C_1 q^{\lambda \nu r}$ holds for  any  $r \in \NN$ with  $r \geq r_1$.
Towards a lower bound on $\left| M ^{\Phi _q ^{\nu r}} \right|$, one  makes use of (\ref{FixedPointsCountOnLByGaloisCover}), (\ref{FixedPointsCountOnMByGaloisCover}), in order to note that
\begin{align*}
\left| H_1 \right| \left| M ^{\Phi _q^{\nu r}} \right| \geq \left| H_1 \right| \left| M_o ^{\Phi _q^{\nu r}} \right| =
\sum\limits _{h \in H_1} \left| N_o ^{h \Phi _q ^{\nu r}} \right| + |H| \left| L_o ^{\Phi _q ^{\nu r}} \right| - \sum\limits _{h \in H}  \left| N_o ^{h \Phi _q ^{\nu r}} \right| =  \\
|H| \left| L_o ^{\Phi _q ^{\nu r}} \right| - \sum\limits _{h \in H \setminus H_1} \left| N_o ^{ h \Phi _{q^{\nu r}}} \right| \ \ \mbox{  for } \ \ \forall r \in \NN.
\end{align*}
If $\PP^1 ( \overline{\FF_q}) \setminus L_o = \{ x_1, \ldots , x_{\alpha} \}$ then there  is an upper bound
 $\left| \PP^1 ( \overline{\FF_q} ) ^{\Phi _q ^{\nu r}} \right| - \left| L_o ^{\Phi _q ^{\nu r}} \right| = \left| (\PP^1 ( \overline{\FF_q}) \setminus L_o) ^{\Phi _q ^{\nu r}} \right| = \left| \{ x_1, \ldots , x_{\alpha} \} ^{\Phi _q ^{\nu r}}  \right| \leq \alpha$, whereas $\left| L_o ^{\Phi _q ^{\nu r}} \right| \geq q^{\nu r} +1 - \alpha$.
There exists $\rho \in \NN$ with $\alpha \leq q^{\lambda \nu r}$ for $\forall r \in \NN$, $r \geq \rho$.
Thus, $\left| L_o ^{\Phi _q ^{\nu r}} \right| \geq q^{\nu r} +1 - q^{\lambda \nu r}$ for $\forall r \geq \rho$.
By assumption, $\xi _H : N_o \rightarrow L_o$ is of $H$-order $\lambda$, so that for $\forall h \in H$  there are constants $C(h) \in \RR ^{>0}$, $r(h) \in \NN$ with
\begin{align*}
\left| N_o ^{h \Phi _q ^{\nu r}} \right| \leq \left| L_o ^{h \Phi _q ^{\nu r}} \right| + C(h) [( \Gamma ^{h \Phi _q ^{\nu r}} . F_2) (L_o) ] ^{\lambda} \leq
\left| L_o ^{h \Phi _q ^{\nu r}} \right| + C(h) q^{\lambda \nu r} \,  \mbox{  for } \,  \forall r \in \NN, \, r \geq r(h)
\end{align*}
by $ ( \Gamma ^{h \Phi _q ^{\nu r}} . F_2) (L_o) = ( \Gamma ^{ \Phi _q ^{\nu r}} . F_2) (L_o) \leq
 ( \Gamma ^{ \Phi _q ^{\nu r}} . F_2) (\PP^1 ( \overline{\FF_q}) ) = \left| \PP^1 ( \overline{\FF_q} ) ^{\Phi _q ^{\nu r}} \right|  =
  \left| \PP^1 ( \FF_{q^{\nu r}}) \right| =  q^{\nu r}$.
If $r_2 := \max \{ \rho, r(h) \, \vert \, h \in H \}$ and $C':= \max \{ C(h) \, \vert \, h \in H \} \in \RR ^{>0}$ then
$$
- \left| N_o ^{h \Phi _q ^{\nu r}} \right| \geq - \left| L_o ^{\Phi _q ^{\nu r}} \right| - C'q^{\lambda \nu r} \ \ \mbox{  for } \ \ \forall r \in \NN, \ \ r \geq r_2.
$$
As a result,
\begin{align*}
\left| H_1 \right| \left| M ^{\Phi _q ^{\nu r}} \right| \geq \left| H \right| \left| L_o ^{\Phi _q ^{\nu r}} \right| - ( \left| H \right| - \left| H_1 \right| ) \left| L_o ^{\Phi _q ^{\nu r}} \right| - ( \left| H \right| - \left| H_1 \right| ) C' q^{\lambda \nu r}  =  \\
 \left| H_1 \right| \left| L_o ^{\Phi _q ^{\nu r}} \right| - ( \left| H \right| - \left| H_1 \right| ) C'q^{\lambda \nu r} \ \ \mbox{  for } \ \ \forall r \in \NN, \ \ r \geq r_2.
\end{align*}
Dividing by $\left| H_1 \right| \in \NN$, one obtains
$$
\left| M^{ \Phi _q ^{\nu r}} \right| \ge \left| L_o ^{\Phi _q ^{\nu r}} \right| - \left( \frac{|H| - |H_1|}{|H_1|} \right) C'q^{\lambda \nu r} \geq q^{\nu r} +1 - \left[ \left( \frac{|H| - |H_1|}{|H_1|} \right)  C'+1 \right] q^{\lambda \nu r}.
$$
Thus, for $C_2 := \left( \frac{|H| - |H_1|}{|H_1|} \right) C' + 1 \in \RR ^{>0}$ one has $\left| M ^{\Phi _q ^{\nu r}} \right| \geq q^{\nu r} + 1 - C_2 q^{\lambda \nu r}$ for $\forall r \in \NN$, $r \geq r_2$.
In such a way, we have shown that if there is a finite covering $\xi : M_o \rightarrow L_o$ of locally finite
$\mathfrak{G}_m = {\rm Gal} ( \overline{\FF_q} / \FF_{q^m})$-modules of order $\lambda := \log _q \sqrt[d]{\left| a_d \right|} \in \RR ^{>0}$  with Galois closure $\xi _H : N_o \rightarrow L_o$ of $H$-order $\lambda$ for $\mathfrak{G}_m$-submodules $M_o \subseteq M$, $L_o \subseteq \PP^1 ( \overline{\FF_q})$ with $\left| M \setminus M_o \right| < \infty$, $\left| \PP^1 ( \overline{\FF_q} ) \setminus L_o \right| < \infty$, then $M$ fulfils condition (iii) from Proposition \ref{RHAReductionToBounds} and $M$ satisfies the Riemann Hypothesis Analogue with respect to the projective line $\PP^1 ( \overline{\FF_q})$ as a locally finite module over $\mathfrak{G}$.

\end{proof}

Here is an example of a locally finite $  \widehat{\ZZ}$-module $M$, which satisfies the Riemann Hypothesis Analogue with respect to $\PP^1 ( \overline{\FF_q})$ and is not isomorphic (as a $  \widehat{\ZZ}$-module) to a smooth irreducible projective variety.

\begin{proposition}    \label{NonProjectiveExample}
For arbitrary natural numbers $d, m \in \NN$, $d \leq m$ and an arbitrary field $\FF_q$, there is a locally finite $\mathfrak{G}_m = {\rm Gal} ( \overline{\FF_q} / \FF_{q^m})$-module $M$ with $\zeta$-function
$$
\zeta _M(t) = \frac{(1-qt) ^d}{(1-t)(1-q^mt)}.
$$
In particular, $M$ satisfies the Riemann Hypothesis Analogue with respect to the projective line $\PP^1 ( \overline{\FF_q})$ and for $m \geq 3$ there is no smooth irreducible algebraic  variety $Y / \FF_{q^m} \subseteq \PP^r ( \overline{\FF_q})$, which is isomorphic to $M$ as a locally finite module over $\mathfrak{G}_m$.
\end{proposition}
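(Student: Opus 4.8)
The plan is to realize the prescribed $\zeta$-function by an explicit module built orbit by orbit, then read off the Analogue, and finally exclude a projective realization for $m\geq 3$ by a Frobenius-weight argument. Write $\varphi:=\Phi_{q^m}$, so that $\mathfrak{G}_m=\widehat{\langle\varphi\rangle}$ and $\zeta_{\PP^1(\overline{\FF_{q^m}})}(t)=1/((1-t)(1-q^mt))$. By Proposition~\ref{DzetaFunctionExpressions} combined with the expansion~(\ref{LogarithmExpansion}), a locally finite $\mathfrak{G}_m$-module $M$ has $\zeta_M(t)=(1-qt)^d/((1-t)(1-q^mt))$ exactly when $|M^{\varphi^r}|=c_r:=q^{mr}+1-dq^r$ for all $r\in\NN$, which by~(\ref{RationalPointsAndOrbits}) is equivalent to prescribing the number $B_k(M)$ of $\mathfrak{G}_m$-orbits of degree $k$ to be $\beta_k:=\tfrac1k\sum_{e\mid k}\mu(k/e)c_e$ for every $k$. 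I would therefore simply set $M:=\coprod_{k\geq1}\coprod_{i=1}^{\beta_k}(\ZZ/k\ZZ)$, where each copy of $\ZZ/k\ZZ$ carries the $\mathfrak{G}_m$-action through $\widehat{\ZZ}\twoheadrightarrow\ZZ/k\ZZ$, $\varphi\mapsto1$, i.e.\ is a single $\mathfrak{G}_m$-orbit of degree $k$ (its stabilizer being $\mathfrak{G}_{mk}$). This $M$ is locally finite with $B_k(M)=\beta_k$, so M\"obius inversion of $\sum_{k\mid r}kB_k(M)=|M^{\varphi^r}|$ returns $\zeta_M(t)=(1-qt)^d/((1-t)(1-q^mt))$.

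What must be verified is $\beta_k\in\ZZ^{\geq0}$. For $k=1$ one has $\beta_1=q^m+1-dq\geq q^m-mq\geq0$, since $q^{m-1}\geq m$ for $q\geq2$. For $k\geq2$ the term $\sum_{e\mid k}\mu(k/e)\cdot1$ vanishes, leaving $k\beta_k=I_m(k)-d\,I_1(k)$ with $I_1(k):=\sum_{e\mid k}\mu(k/e)q^{e}$ and $I_m(k):=\sum_{e\mid k}\mu(k/e)q^{me}$; each of $I_1(k)$, $I_m(k)$ is $k$ times the number of monic irreducibles of degree $k$ over $\FF_q$, resp.\ $\FF_{q^m}$, so $k\mid k\beta_k$ and $\beta_k\in\ZZ$, while $I_1(k)\leq q^k$ and a routine bound on the alternating sum gives $I_m(k)>\tfrac23 q^{mk}$ for $m\geq2$, $k\geq2$. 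Since $d\leq m$, one is reduced to $\tfrac23 q^{mk}\geq m q^k$, that is $q^{(m-1)k}\geq\tfrac{3m}2$, which holds for all $q\geq2$, $m\geq2$, $k\geq2$ (and $m=1$ forces $d=1$, hence $\beta_k=0$ for $k\geq2$). Thus $\beta_k\geq0$ and $M$ exists.

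The Riemann Hypothesis Analogue is now immediate: the $\zeta$-quotient is $P_M(t)=(1-qt)^d=\prod_{j=1}^d(1-\omega_jt)$ with $\omega_j=q$ for all $j$, and since $a_d=(-q)^d$ has $|a_d|=q^d$ we get $|\omega_j|=q=\sqrt[d]{|a_d|}$, which is Definition~\ref{RHA}. For the non-realizability when $m\geq3$, recall by Corollary~\ref{DzetaFunctionDeterminesModuleStructure} that a smooth irreducible projective $Y/\FF_{q^m}\subseteq\PP^r(\overline{\FF_q})$ is $\mathfrak{G}_m$-isomorphic to $M$ iff its Weil zeta function over $\FF_{q^m}$ satisfies $\zeta_Y(t)=\zeta_M(t)$. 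Assume this and put $n:=\dim Y$. As $m\geq2$, the counts $|Y(\FF_{q^{mr}})|=q^{mr}+1-dq^r$ grow like $q^{mr}$, whereas an elementary estimate (Noether normalization of a dense affine open, or Lang--Weil) bounds $|Y(\FF_{q^{mr}})|$ above and below by positive constant multiples of $q^{mnr}$, forcing $n=1$. Then $Y$ is a smooth projective curve carrying an $\FF_{q^m}$-point (as $|Y(\FF_{q^m})|=q^m+1-dq>0$), hence geometrically integral, and Hasse--Weil gives $\zeta_Y(t)=P_1(t)/((1-t)(1-q^mt))$ with $P_1(t)=\prod_{j=1}^{2g}(1-\alpha_jt)$, $|\alpha_j|=q^{m/2}$. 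Comparison with $\zeta_M$ forces $P_1(t)=(1-qt)^d$, so $d=2g$ and every $\alpha_j=q$, whence $q=|\alpha_j|=q^{m/2}$ and $m=2$ --- contradicting $m\geq3$.

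The one genuinely delicate step is the positivity of the orbit counts $\beta_k$, equivalently the assertion that the prescribed point counts $q^{mr}+1-dq^r$ are attained by an honest orbit distribution; this is precisely where the hypothesis $d\leq m$ enters, and it rests on the lower bound for the necklace sum $I_m(k)$ together with a hand-check of the cases $k\in\{1,2\}$ and $m=2$. Everything else --- the construction of $M$, the verification of the Analogue, and the exclusion of a projective model --- is soft once this inequality is in place.
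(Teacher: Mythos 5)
Your proof is correct, and the construction of $M$ is genuinely different from the paper's. The paper builds $M$ geometrically: it takes $Z\subset\overline{\FF_q}^m$ to be the union of the $d$ coordinate axes together with $d-1$ extra $\Phi_q$-fixed points, sets $U:=\overline{\FF_q}^m\setminus Z$, computes $|U^{\Phi_q^r}|=q^{mr}-dq^r+1$ directly, and then \emph{reinterprets} the original $\Phi_q$-action as the action of $\Phi_{q^m}$, obtaining a $\mathfrak{G}_m$-module $M$ on the same underlying set with the desired $\zeta$-function. Because it is a concrete subset of affine space, non-negativity of the orbit counts is automatic, and the hypothesis $d\leq m$ enters only in that there are exactly $m$ coordinate axes to choose from. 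You instead construct $M$ purely combinatorially by prescribing the orbit counts $\beta_k$ via M\"obius inversion, which shifts the entire burden onto showing $\beta_k\geq0$; your necklace-polynomial estimates do close this and locate $d\leq m$ as the precise thing that makes the inequality work. Your approach is more abstract but arguably more transparent about \emph{why} the hypothesis $d\leq m$ is needed, and it yields a reusable recipe (prescribe $|M^{\varphi^r}|$ and check the M\"obius-inverted counts are non-negative integers) for manufacturing further examples; the paper's approach is geometrically explicit and needs no positivity check. For the non-realizability when $m\geq3$ the paper invokes the general Riemann Hypothesis for smooth projective varieties (structure of $\zeta_Y$ by cohomological degree) and matches it against $\zeta_M$ to force $n=1$, $m=2$; your route through a Lang--Weil growth estimate to pin $n=1$ followed by Hasse--Weil for curves is essentially the same comparison phrased slightly more elementarily, and is equally valid.
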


\begin{proof}

Let
$$
Z := \cup _{i=1} ^d V( x_1, \ldots , x_{i-1}, x_{i+1}, \ldots , x_n) \cup \{ z_1, \ldots , z_{d-2} \} \subset \overline{\FF_q}^m
$$
 be the union of the affine lines $V( x_1, \ldots , x_{i-1}, x_{i+1}, \ldots , x_n) = 0^{i-1} \times \overline{\FF_q} \times 0^{m-i}$ with the points $z_j := 0^{j-1} \times 1 \times 0^{m-1-j} \times 1$, $1 \leq j \leq d-2$ and
 $$
 U := \overline{\FF_q}^m \setminus Z
 $$
  be the complement of $Z$.
Then $Z$ and $U$ are invariant under the action of the absolute Galois group ${\rm Gal} ( \overline{\FF_q} / \FF_q)$ of $\FF_q$  with topological generator $\Phi _q$, $\Phi _q ( a_1, \ldots , a_m) = (a_1 ^q, \ldots , a_m ^q)$ for $\forall (a_1, \ldots , a_m) \in \overline{\FF_q}^m$.
The $\mathfrak{G}$-action on $U$ is locally finite as a restriction of the locally finite $\mathfrak{G}$-action on $\overline{\FF_q}^m$.
In order to compute the $\zeta$-function of $U$ as a $\mathfrak{G}$-module, note that
$$
Z = \left( \coprod\limits _{i=1} ^d 0^{i-1} \times \overline{\FF_q}^* \times 0^{m-i} \right) \coprod \left \{ z_1, \ldots , z_{d-2} , 0^m \right \}
$$
is a disjoint union of $d$ punctured affine lines with $d-1$ points, fixed by $\mathfrak{G}$.
Therefore
$$
Z^{\Phi _q ^r} = \left( \coprod\limits _{i=1} ^d 0^{i-1} \times \FF_{q^r} ^* \times 0^{m-i} \right) \coprod \{ z_1, \ldots , z_{d-2} , 0^m \} \ \ \mbox{  for } \ \ \forall r \in \NN
$$
and $U ^{\Phi _q ^r} = ( \overline{\FF_q}^m) ^{\Phi _q ^r} \setminus Z^{\Phi _q ^r} = \FF_{q^r} ^m \setminus Z^{\Phi _q ^r}$ is of cardinality
 $$
 \left| U ^{\Phi _q ^r} \right| = q^{rm}  - [d(q^r-1) + d-1] = q^{rm} - dq^r +1.
 $$
 Making use of (\ref{LogarithmExpansion}), one computes that
\begin{align*}
\log \zeta _U(t) = \sum\limits _{r=1} ^{\infty} \left| U^{\Phi _q ^r} \right| \frac{t^r}{r} =
 \sum\limits _{r=1} ^{\infty} \left( \frac{q^m t}{r} \right) ^r - d \left( \sum\limits _{r=1} ^{\infty} \frac{(qt)^r}{r} \right)
 + \left( \sum\limits _{r=1} ^{\infty} \frac{t^r}{r} \right) =  \\
 - \log  (1 - q^mt) + d \log (1-qt) - \log (1-t) =
 \log \frac{(1-qt)^d}{(1-t)(1-q^mt)}.
 \end{align*}
Thus, the $\zeta$-function of $U$ as a locally finite module over  $\mathfrak{G} = {\rm Gal} ( \overline{\FF_q} / \FF_q)$ is
$$
\zeta _U(t) = \frac{(1-qt)^d}{(1-t)(1-q^mt)}.
$$
Now, we define such a $\mathfrak{G}_m = {\rm Gal} ( \overline{\FF_q} / \FF_{q^m})$-action on $U$ which turns  the set $M := U$ into a locally finite $\mathfrak{G}_m$-module  with $\zeta _M(t) = \zeta _U(t)$.
More precisely, let
$$
\Phi _{q^m} : M \longrightarrow M,   \quad
\Phi _{q^m} (a) := (a_1 ^q, \ldots , a_m ^q) \ \ \mbox{  for } \ \ \forall a \in M.
$$
The resulting action $\langle \Phi _{q^m} \rangle \times M \rightarrow M$ is locally finite, as far as its orbits coincide with the orbits of the original $\mathfrak{G} = {\rm Gal} ( \overline{\FF_q} / \FF_q)$-action on $U$.
Moreover, it has a unique extension to the profinite completion $\mathfrak{G}_m = \widehat{ \langle \Phi _{q^m} \rangle}$ by the rule
$$
\mathfrak{G}_m \times M \longrightarrow M,
$$
$$
\left( \Phi _{q^m} ^{l_s ({\rm mod}   s)} \right) _{s \in \NN} (a) := \left( \Phi _{q^m} \right) ^{l_{\delta}} (a)   \mbox{  for }
\forall a \in M    \mbox{   of }    \deg {\rm Orb} _{\langle \Phi _{q^m} \rangle} (a) = \deg {\rm Orb} _{\mathfrak{G}_m} (a) = \delta.
$$
According to  the coincidence   $M ^{(\Phi _{q^m})^r} = U^{\Phi _q^r}$ of the fixed point sets, the $\zeta$-function of $M$ as a $\mathfrak{G}_m$-module is
$$
\zeta _M(t) = \exp \left( \sum\limits _{r=1} ^{\infty} \left| M ^{(\Phi _{q^m}) ^r} \right| \frac{t^r}{r} \right) =
\exp \left( \sum\limits _{r=1} ^{\infty} \left| U^{\Phi _q ^r} \right| \frac{t^r}{r} \right) = \zeta _U(t).
$$

By the very definition, $M$ satisfies the Riemann Hypothesis Analogue with respect to $\PP^1 ( \overline{\FF_q})$ as a module over $\mathfrak{G}_m$.
Let us assume the existence of a smooth irreducible projective variety $Y / \FF_{q^m} \subseteq \PP^r ( \overline{\FF_q})$, which is isomorphic to $M$ as a module over $\mathfrak{G}_m = {\rm Gal} ( \overline{\FF_q} / \FF_{q^m})$.
Then by  the Riemann Hypothesis Analogue for $Y$ of dimension $\dim Y = n$ (cf.Conjecture 2.17 from \cite{ZetaBook}), the $\zeta$-function
$$
\zeta _M(t) = \zeta _Y(t) = \frac{\prod\limits _{s=1} ^n \prod\limits _{j_s} (1 - q^{\frac{ms}{2}} e^{i \varphi _{j_s}} t)}{(1-t) \left[  \prod\limits _{s=1} ^{n-1} \prod\limits _{j_s} (1 - q^{ms} e^{i \varphi _{j_s}} t) \right] (1 - q^{mn}t)}.
$$

The specific form of $\zeta _M(t)$ requires $n=1$ and $m=2$.
That shows the non-existence of a smooth irreducible $Y / \FF_{q^m} \subseteq \PP^r ( \overline{\FF_q})$ with $m \geq 3$, which is isomorphic to $M$ as a locally finite module over $\mathfrak{G}_m$.

\end{proof}

The local Weil $\zeta$-function of a smooth irreducible curve $X / \FF_q \subseteq \PP^n ( \overline{\FF_q})$ of genus $g$ satisfies the functional equation
$$
\zeta _X(t) = \zeta _X \left( \frac{1}{qt} \right) q^{g-1} t^{2g-2}
$$
(cf.Proposition V.1.13 from \cite{St}).
This is not true for an arbitrary locally finite module $M$ over $\mathfrak{G} = {\rm Gal} ( \overline{\FF_q} / \FF_q)$.
Even if $M$ satisfies the Riemann Hypothesis Analogue with respect to $\PP^1 ( \overline{\FF_q})$ and $\zeta _M(t) = \frac{\prod\limits _{j=1} ^d (1 - q^{\lambda} e^{i \varphi _j} t)}{(1-t)(1-qt)}$ for some $\lambda \in \RR ^{>0}$, $\varphi _j \in [0, 2 \pi)$, the ratio
$$
\frac{\zeta _M(t)}{\zeta _M \left( \frac{1}{q^st} \right)} = C t^r \ \ \mbox{  with  } \ \ C \in \CC, \ \  r \in \ZZ
$$
has no poles and zeros outside the origin $0 \in \CC  $ if and only if $s=1$, $\lambda = \frac{1}{2}$,
$\zeta _M(t) = {\rm sign} (a_d)  \zeta _M \left( \frac{1}{qt} \right) q^{\frac{d}{2}-1} t^{d-2}$ or
$s=0$, $\lambda = d =1$, $\zeta _M(t) = \frac{1}{1-t}$.
More precisely,
\begin{align*}
\frac{\zeta _M(t)}{\zeta _M \left( \frac{1}{q^st} \right)} = \frac{\left[ \prod\limits _{j=1} ^d (1 - q^{\lambda} e^{i \varphi _j} t) \right] (1 - q^st)
(1 - q^{s-1} t)}{ \left[ \prod\limits _{j=1} ^d (1 - q^{s - \lambda} e^{- i \varphi _j} t) \right] (1-t)(1-qt)} a_d ^{-1} q^{sd - 2s +1} t^{d-2}
\end{align*}
 for the leading coefficient ${\rm LC} (P_M(t) )= a_d =   (-1)^d q^{  \lambda d} e^{  i \left( \sum\limits _{j=1} ^d \varphi _j \right)} \in \CC^*$ of the
 polynomial $\zeta$-quotient
 $P_M(t) = \frac{\zeta _M(t)}{\zeta _{\PP^1 ( \overline{\FF_q})} (t)} = \prod\limits _{j=1} ^d (1 - q^{\lambda} e^{i \varphi _j} t) $.
 Since $P_M(t) \in \ZZ[t] \subset \RR [t]$ is invariant under complex conjugation, the sets $\left \{ e^{i \varphi _j} \, \vert \, 1 \leq j \leq d \right \} = \left \{ e^{ - i \varphi _j} \, \vert \, 1 \leq j \leq d \right \}$ coincide, when counted with multiplicities and
\begin{align*}
\frac{\zeta _M(t)}{\zeta _M \left( \frac{1}{q^st} \right)} = \frac{\left[ \prod\limits _{j=1} ^d (1 - q^{\lambda} e^{i \varphi _j} t) \right] (1 - q^st)
(1 - q^{s-1} t)}{ \left[ \prod\limits _{j=1} ^d (1 - q^{s - \lambda} e^{  i \varphi _j} t) \right] (1-t)(1-qt)}  a_d^{-1} q^{sd - 2s +1} t^{d-2}.
\end{align*}
If $\frac{\zeta _M(t)}{\zeta _M \left( \frac{1}{q^st} \right)}$ has no zeros and poles outside $0 \in C$ then the sets
$$
\{ q^s, q^{s-1}, q^{\lambda} e^{i \varphi _j} \, \vert \, 1 \leq j \leq d \} = \{ q, 1, q^{s - \lambda} e^{i \varphi _j} \, \vert \, 1 \leq j \leq d \}
$$
coincide, when counted with multiplicities.
The assumption $\lambda > 0$ implies $q^s \neq q^{s - \lambda} e^{i \varphi _j}$ for $\forall 1 \leq j \leq d$ and specifies that $q^s \in \{ q, 1 \}$.
Therefore $s=1$ or $s=0$.

In the case of $s=1$, the coincidence $\{ q^{\lambda} e^{i \varphi _j} \, \vert \, 1 \leq j \leq d \} = \{ q^{1 - \lambda} e^{ i \varphi _j} \, \vert \, 1 \leq j \leq d \}$ of sets with multiplicities requires $\lambda = \frac{1}{2}$ and specifies that
$$
\zeta _M(t) =  {\rm sign} (a_d) \zeta _M \left( \frac{1}{qt} \right) q^{\frac{d}{2}-1} t^{d-2},
$$
according to $\left| a_d \right| = q^{\frac{d}{2}}$.

Suppose that $s=0$.
 Then the coincidence of sets
    $\{ 1, q^{-1}, q^{\lambda} e^{i \varphi _j} \, \vert \, 1 \leq j \leq d \} =
    \{ 1, q, q^{- \lambda} e^{i \varphi _j} \, \vert \, 1 \leq j \leq d \}$ with multiplicities leads to   $q^{-1} = q^{- \lambda} e^{i \varphi _j}$
     for some $1 \leq j_o \leq d$.
Thus, $\lambda =1$ and $\{ q e^{i \varphi _j} \, \vert \, 1 \leq j \leq d \} = \{ q \}$,  whereas $d=1$,
 $$
 \zeta _M(t) = \frac{1-qt}{(1-t)(1-qt)} = \frac{1}{1-t}
 $$
 and $M$ consists of a single $\mathfrak{G}$-fixed point.

The next corollary establishes that the Riemann Hypothesis Analogue with respect to the projective line $\PP^1 ( \overline{\FF_q})$ for an arbitrary locally finite $\mathfrak{G} = {\rm Gal} ( \overline{\FF_q} / \FF_q)$-module $M$ implies a functional equation for the polynomial $\zeta$-quotient
$P_M(t) = \frac{\zeta _M(t)}{\zeta _{\PP^1 ( \overline{\FF_q})} (t)} \in \ZZ[t]$.

\begin{corollary}    \label{RHAImpliesFE}
Let $M$ be an infinite locally finite module over $\mathfrak{G} = {\rm Gal} ( \overline{\FF_q} / \FF_q)$, which satisfies the Riemann Hypothesis Analogue with respect to $\PP^1 ( \overline{\FF_q})$.
Then the polynomial $\zeta$-quotient
$P_M(t) = \frac{\zeta _M(t)}{\zeta _{\PP^1 ( \overline{\FF_q})} (t)}  = \sum\limits _{j=0} ^d a_j t^{j} \in \ZZ[t]$ of $M$ satisfies the functional equation
$$
 P_M(t) = {\rm sign} (a_d) P_M \left( \frac{1}{q^{2 \lambda} t} \right) q^{\lambda d} t^d \ \ \mbox{   for } \ \ \lambda := \log _q \sqrt[d]{\left| a_d \right|}.
 $$
\end{corollary}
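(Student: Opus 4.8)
The plan is to read the functional equation off the factorization of $P_M(t)$ supplied by the Riemann Hypothesis Analogue, using that $P_M(t)$ has real — indeed integral — coefficients. By Definition \ref{RHA} one writes $P_M(t) = \prod_{j=1}^d (1 - \omega_j t)$ with $|\omega_j| = \sqrt[d]{|a_d|} = q^\lambda$ for every $1 \le j \le d$, so that $\omega_j = q^\lambda e^{i\varphi_j}$ for suitable $\varphi_j \in [0,2\pi)$. Since $P_M(t) \in \ZZ[t] \subset \RR[t]$ is fixed by complex conjugation, the multiset $\{\omega_j \mid 1 \le j \le d\}$ is conjugation-stable, and hence so is $\{ e^{i\varphi_j} \mid 1 \le j \le d \}$; that is, $\{ e^{i\varphi_j} \mid 1 \le j \le d \} = \{ e^{-i\varphi_j} \mid 1 \le j \le d \}$ as multisets — exactly the symmetry already noted in the discussion preceding the corollary.

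Next I would compute the right-hand side directly. Clearing the denominator $q^{2\lambda}t$ in each binomial factor yields
$$
q^{\lambda d} t^d \, P_M\!\left( \frac{1}{q^{2\lambda}t} \right) = q^{\lambda d}t^d \prod_{j=1}^d \left( 1 - \frac{e^{i\varphi_j}}{q^\lambda t} \right) = \prod_{j=1}^d \left( q^\lambda t - e^{i\varphi_j} \right) = q^{\lambda d} \prod_{j=1}^d \left( t - \frac{e^{i\varphi_j}}{q^\lambda} \right),
$$
which by the conjugation symmetry equals $q^{\lambda d}\prod_{j=1}^d \left( t - e^{-i\varphi_j}/q^\lambda \right)$. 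On the other hand, pulling the factor $-\omega_j$ out of each binomial in $P_M(t)$ gives
$$
P_M(t) = \left( \prod_{j=1}^d (-\omega_j) \right) \prod_{j=1}^d \left( t - \frac{1}{\omega_j} \right) = a_d \prod_{j=1}^d \left( t - \frac{e^{-i\varphi_j}}{q^\lambda} \right),
$$
since the leading coefficient of $P_M(t)$ is $a_d = \prod_{j=1}^d(-\omega_j) = (-1)^d q^{\lambda d}\prod_{j=1}^d e^{i\varphi_j}$.

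Comparing the two displays produces $P_M(t) = \frac{a_d}{q^{\lambda d}}\, q^{\lambda d} t^d\, P_M\!\left( \frac{1}{q^{2\lambda}t} \right) = a_d\, t^d\, P_M\!\left( \frac{1}{q^{2\lambda}t} \right)$. Finally, $a_d$ is a nonzero rational integer with $|a_d| = q^{\lambda d}$, hence $a_d = {\rm sign}(a_d)\, q^{\lambda d}$, and substituting this identity gives the asserted functional equation $P_M(t) = {\rm sign}(a_d)\, P_M\!\left( \frac{1}{q^{2\lambda}t} \right)\, q^{\lambda d} t^d$. I do not expect a genuine obstacle here: the only points demanding care are the bookkeeping of multiplicities in the conjugation argument and the observation that the complex number $(-1)^d\prod_{j=1}^d e^{i\varphi_j}$, a priori only of modulus $1$, is pinned down to $\pm 1 = {\rm sign}(a_d)$ precisely because it equals $a_d/q^{\lambda d}$ with $a_d \in \ZZ$.
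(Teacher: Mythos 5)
Your proof is correct and follows essentially the same route as the paper: factor $P_M(t)=\prod_{j=1}^d(1-q^{\lambda}e^{i\varphi_j}t)$, use the reality of the coefficients to get conjugation-stability of the multiset $\{e^{i\varphi_j}\}$, compute $P_M\!\left(\tfrac{1}{q^{2\lambda}t}\right)$, and pin down the ambiguous unit factor to ${\rm sign}(a_d)$ via $|a_d|=q^{\lambda d}$ with $a_d\in\ZZ$. The only cosmetic difference is that you clear denominators and compare monic-normalized products $\prod_j(t-e^{-i\varphi_j}/q^{\lambda})$, whereas the paper keeps everything in the $\prod_j(1-q^{\lambda}e^{-i\varphi_j}t)$ form, but the computation is the same.
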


\begin{proof}

If $P_M(t) = \prod\limits _{j=1} ^d (1 - q ^{\lambda} e^{i \varphi _j} t)$ for some $\varphi _j \in [0, 2 \pi)$ then the leading coefficient
 ${\rm LC} ( PM(t)) = a_d = (-1)^d q^{\lambda d} e^{i \left( \sum\limits _{j=1} ^d \varphi _j \right) }    $, whereas 
 $$
 P_M \left( \frac{1}{q^{2 \lambda} t} \right) = \frac{a_d}{q^{2 \lambda d} t^d} \prod\limits _{j=1} ^d (1 - q^{\lambda} e^{-i \varphi _j} t).
 $$
The polynomial $P_M(t) \in \ZZ[t]$ has real coefficients and is invariant under the complex conjugation.
 Thus, the sets  $\left \{ e^{i \varphi _j} \, \vert \, 1 \leq j \leq d \right \} = \left \{ e^{ - i \varphi _j} \, \vert \, 1 \leq j \leq d \right \}$ coincide when counted with multiplicities   and    $P_M(t) = \prod\limits _{j=1} ^d (1 - q^{\lambda} e^{ - i \varphi _j} t)$.
That allows to express 
 $$
 P_M \left( \frac{1}{q^{2 \lambda} t} \right) = \frac{a_d}{q^{2 \lambda d}} P_M(t) t^{-d}.
 $$
Making use of $\left| a_d \right| = q^{\lambda d}$ and $a_d = {\rm sign} (a_d) \left| a_d \right|$, one concludes that 
$$
P_M \left( \frac{1}{q^{2 \lambda} t} \right) = \frac{{\rm sign} ( a_d)}{q^{\lambda d}} P_M(t) t^{-d}.
$$

\end{proof}


 \end{document}